\newcommand{\p }{ \varphi}
\newcommand{\ph }{ \varphi}
\newcommand{\la }{ \lambda}
\newcommand{\Om }{ \Omega}
\newcommand{\be }{ \beta}
\newcommand{\dn}{ \partial_n}
\newcommand{\R }{\mathds{R}}
\newcommand{\vs}{\medskip}
\definecolor{darkgreen}{rgb}{0.0,0.6,0.0}
\newcommand{\tred}{\textcolor{red}}
\definecolor{trustcolor}{rgb}{0.71,0.14,0.07}
\newtheorem{lemma}{Lemma}
\newtheorem{proposition}{Proposition}
\newtheorem{theorem}{Theorem}
\newtheorem{remark}{Remark}
\begin{document}

\title[Properties of optimizers of the principal eigenvalue with indefinite weight]{Properties of optimizers of the principal eigenvalue with indefinite weight and Robin conditions}

\author{Jimmy Lamboley}
\address{CEREMADE, Universit\'e Paris-Dauphine, CNRS, PSL Research University, 75775 Paris, France\\
({\tt jimmy.lamboley@ceremade.dauphine.fr})}


\author{Antoine Laurain}
\address{Instituto de Matem\'atica e Estat\'istica, Universidade de S\~{a}o Paulo,
Rua do Mat\~{a}o, 1010, 05508-090 - S\~{a}o Paulo, Brazil  ({\tt laurain@ime.usp.br})}

\author{Gr\'egoire Nadin}
\address{CNRS, Universit\'e Pierre et Marie Curie (Univ. Paris 6), UMR 7598, Laboratoire Jacques-Louis Lions, F-75005, Paris, France ({\tt gregoire.nadin@upmc.fr}).}

\author{Yannick Privat}
\address{CNRS, Universit\'e Pierre et Marie Curie (Univ. Paris 6), UMR 7598, Laboratoire Jacques-Louis Lions, F-75005, Paris, France ({\tt yannick.privat@upmc.fr})}

\date{\today}
\thanks{
This work was partially supported by the project ANR-12-BS01-0007 OPTIFORM financed by the French Agence Nationale de la Recherche (ANR).
}

\begin{abstract}
In this paper, we are interested in the analysis of a well-known free boundary/shape optimization problem motivated by some issues arising in population dynamics. The question is to determine optimal spatial arrangements of favorable and unfavorable regions for a species to survive. The mathematical formulation of the model leads to an indefinite weight linear eigenvalue
problem in a fixed box $\Omega$ and we consider the general case of Robin boundary conditions on $\partial\Omega$. It is well known that it suffices to consider {\it bang-bang} weights taking two values of different signs, that can be parametrized by the characteristic function of the subset $E$ of $\Omega$ on which resources are located. Therefore, the optimal spatial arrangement is obtained by minimizing the positive principal eigenvalue with respect to $E$, under a volume constraint. By using symmetrization techniques, as well as necessary optimality conditions, we prove new qualitative results on the solutions. Namely, we completely solve the problem in dimension 1, we prove the counter-intuitive result that the ball is almost never a solution in dimension 2 or higher, despite what suggest the numerical simulations. We also introduce a new rearrangement in the ball allowing to get a better candidate than the ball for optimality when Neumann boundary conditions are imposed. We also provide numerical illustrations of our results and of the optimal configurations.
\end{abstract}

\maketitle

\noindent \textbf{Keywords:} extremal eigenvalue problem, shape optimization, symmetrization technique.\\ [0.1cm]
\noindent \textbf{AMS classication:} 49J15, 49K20,  	49R05, 49M05.

%

\section{Introduction: Elliptic Problem with indefinite weight}\label{sect:intro}
\subsection{The optimal design problem}\label{subsec:optDesPb}\label{ssect:pb}

In this paper, we are interested in the analysis of a well-known free boundary/shape optimization problem motivated by a model of population dynamics. The question is to determine the optimal spatial arrangement of favorable and unfavorable regions for a species to survive. We prove new qualitative results on the optimizer, using rearrangement techniques on the one hand, first order optimality conditions on the other hand.

More precisely, the following linear eigenvalue problem with indefinite weight is formulated in \cite{Fleming}:
\begin{equation}\label{eq:EV_main}
\left\{\begin{array}{rcl}
\Delta\ph+\lambda m\ph &=& 0  \mbox{ in }  \Omega ,\\
\partial_n\ph+\beta \ph &=& 0  \mbox{ on }  \partial\Omega ,
\end{array}\right.
\end{equation}
where $\Omega$ is a bounded domain (open and connected set) in $\mathds{R}^{N}$ with a Lipschitz boundary
$\partial\Omega$, $n$ is the outward unit normal vector on $\partial\Omega$, $\beta\in\R$ and the weight $m$ is a bounded measurable function which changes sign in $\Om$ (meaning that $\Om^+_{m} :=\{x\in\Om :m(x)>0\}$ has a measure strictly between 0 and $|\Om|$) and satisfies
\begin{equation}
\label{kappa} -1\le m(x)\le\kappa\quad\textrm{for almost every } x\mbox{ in }\Omega,
\end{equation}
where $\kappa>0$ is a given constant.

As we are motivated by a biological problem, we focus in this article on varying sign weights $m(\cdot)$, but most if not all our techniques can be applied to the case of positive $m$, as soon as a positive principal eigenvalue exists. Additional comments on positive weights can be found in Section \ref{sec:basics}.

 It is said that $\la$ is a \textit{principal eigenvalue} of \eqref{eq:EV_main} if the corresponding eigenfunction $\ph\in H^1(\Om)$ is positive. The existence of principal eigenvalues of \eqref{eq:EV_main} with respect to the parameter $\beta$ was discussed in \cite{AfrouziBrown,Bocher}. More precisely,
\begin{itemize}
\item in the Dirichlet case (``$\beta=+\infty$''), there are exactly two principal eigenvalues $\lambda^-<0<\lambda^+$, respectively associated with the eigenfunctions $\varphi^-$ and $\varphi^+$ satisfying
$$\int_{\Om}m(x)\varphi^-(x)^2dx<0, \;\;\;\int_{\Om}m(x)\varphi^+(x)^2dx>0,$$
\item the case $0<\beta<+\infty$ is similar to the Dirichlet case,
\item in the critical case $\beta=0$, which corresponds to Neumann boundary conditions, there are two principal eigenvalues, $0$ and $\lambda$, respectively associated with the eigenfunctions $1$ and $\varphi$; moreover $\lambda>0$ if and only if $\int_{\Omega}m(x)dx<0$, in which case we have 
$$\int_{\Om}m(x)\varphi(x)^2dx>0,$$
\item
for $\beta<0$, it was shown in \cite{AfrouziBrown} that, depending on $\beta$, (\ref{eq:EV_main}) has two, one or zero principal eigenvalues. In the case of two principal eigenvalues,  distinguishing them is achieved by considering the sign of $\int_\Om m(x) \ph(x)^2\, dx $.
\end{itemize}

In the rest of the paper , we focus on the case $\beta\geq 0$ which is relevant for applications in the context of species survival. Therefore, assuming that $|\Om^+_{m}|>0$, and besides that $\int_{\Om}m<0$ if $\beta=0$, there exists a unique positive principal eigenvalue for Problem \eqref{eq:EV_main}, denoted $\lambda(m)$. Moreover, $\lambda(m)$ rewrites also as
\begin{equation}
\label{eq:variational}\la (m)=\inf_{\p\in\mathcal{S}(m)}\Re_m[\varphi], 
\end{equation}
where
\begin{equation}\label{def:Re}
 \Re_m[\varphi]=\frac{\int_\Om |\nabla\p|^2 + \beta\int_{\partial\Omega}\p^2}{\int_\Om m\p^2}
\qquad \textrm{and}\qquad \mathcal{S}(m)=\left\{\p\in H^1(\Om):\int_\Om m\p^2>0 \right\},
\end{equation}
whenever $\beta<+\infty$. This has been proved for Neumann boundary conditions ($\beta=0$) in \cite{LoYa} and the reader can check that the extension to $\beta>0$ is straightforward.
Moreover, $\la(m)$ is simple, the infimum is reached, the associated eigenfunctions do not change sign in $\overline{\Om}$, and any eigenfunction belonging to $\mathcal{S}(m)$ and that do not change sign is associated with $\la(m)$.

In the Dirichlet case, where the boundary condition $\partial_n\varphi+\beta \varphi=0$ on $\partial\Om$ is replaced by $\varphi=0$ on $\partial\Om$, this formulation becomes
\begin{equation}\label{eq:variationalD}
\la (m)=\inf\left\{
\frac{\int_\Om |\nabla\p|^2}{\int_\Om m\p^2}
, \;\;\;\p\in H^1_{0}(\Om), \;\;\int_\Om m\p^2>0 \right\}.
\end{equation}
Indeed, according to Proposition \ref{prop:asympDiri} below, the Dirichlet eigenvalue can be obtained from the Robin eigenvalues by letting $\beta \to +\infty$.

\bigskip

Throughout this paper, we will analyze the following optimization problem, modeling the optimal arrangement for a species to survive.

\begin{quote}
\noindent{\bf Optimal arrangement of ressources for species survival.}
\textit{Let $\Omega$ be a bounded domain of $\R^N$. Given $\kappa>0$ and $m_{0}\in (-\kappa,1)$ if $\beta>0$ or $m_{0}\in (0,1)$ if $\beta =0$, we consider the optimization
 problem 
\begin{equation}\label{mini} 
\inf_{m\in \mathcal{M}_{m_0,\kappa}} \lambda(m)\quad\textnormal{where}\quad  \mathcal{M}_{m_0,\kappa}= \left\{ m\in L^{\infty}(\Omega): -1\leq m\leq \kappa,\; |\Om_{m}^+|>0,\;\int_{\Om}m\leq -m_{0}|\Om| \right\}.
\end{equation}
}
\end{quote}

In Section \ref{ssect:bio}, the biological motivations for considering such a problem, as formulated by Cantrell and Cosner in \cite{MR1014659,MR1112065}, are recalled. 

It is well known (see for example \cite{MR2886017, LoYa} and Section \ref{ssect:optimality}) that Problem \eqref{mini} has a solution $m^*$, and moreover there exists a measurable subset $E^*\subset\Omega$ such that, up to a set of zero Lebesgue measure, there holds 
$$
m^*=m_{E^*}\quad\textrm{ where }\quad m_{E^*}=\kappa \mathbbm{1}_{E^*}- \mathbbm{1}_{\Omega\backslash E^*}\textrm{ a.e. in }\Omega.
$$
In addition, one has $\int_\Om m^*=\int_\Om m_{E^*} = -m_0 |\Om|$, which is a direct consequence of a  comparison principle\footnote{Indeed, by comparing the Rayleigh quotients for $m_1$ and $m_2$, one gets
$$m_1>m_2 \ \Longrightarrow \ \lambda(m_1)<\lambda(m_2).$$
One can refer for instance to \cite[Lemma 2.3]{LoYa}.}.
In other words, the minimizer saturates at the same time the pointwise and integral constraints on $m$. As a consequence, the optimal design problem above can actually be rewritten as a shape optimization problem:
\begin{quote}
\noindent{\bf Shape optimization formulation of Problem \eqref{eq:variational}.} Using the same notations as above, let $c=\frac{1-m_{0}}{\kappa+1}\in(0,1)$. We investigate the optimal design problem
\begin{equation}\label{minishape} 
\inf_{E\in \mathcal{E}_{c,\kappa}} \lambda(E)\quad\textnormal{with} \quad \lambda(E)=\lambda(\kappa \mathbbm{1}_{E}- \mathbbm{1}_{\Omega\backslash E}),
\end{equation}
where $\mathcal{E}_{c,\kappa}$ denotes the set of Lebesgue measurable sets $E$ such that $0<|E|\leq c|\Om|$.
\end{quote}

Therefore and to sum up, given $\beta\geq 0$ and $\kappa\in(0,\infty)$, it is equivalent to choose either the parameter $m_{0}\in(-\kappa,1)$ (with, in addition, $m_{0}>0$ when $\beta=0$) or the parameter $c\in(0,1)$ (with $c<\frac{1}{\kappa+1}$ if $\beta =0$) 
and the solution is then naturally a function of three parameters (either $(\beta,\kappa,m_{0})$ or $(\beta,\kappa,c)$), once $\Omega$ is given. 

\subsection{New results}

In this paper, we obtain three new qualitative results on the shape optimization problem we described in the previous section. 

\bigskip

In our {\bf first result}, we provide a complete description of the optimal sets in the one-dimensional case.
\begin{theorem}\label{Robin_connected}
Assume $N=1$ and without loss of generality, let us consider $\Om=(0,1)$. Let $\beta\in[0,+\infty]$, $\kappa>0$, and $c\in(0,1)$ whenever $\beta>0$ or $c\in (0,\frac{1}{\kappa+1})$ whenever $\beta=0$. Define
\begin{equation}\label{eq:beta*}
\beta^*:=\left\{\begin{array}{ll}
\frac{2}{c\sqrt{\kappa}}\arctan\left(\frac{1}{\sqrt{\kappa}}\right)&\textrm{ if }\kappa>1\\
\frac{\pi}{2c}&\textrm{ if }\kappa=1\\
\frac{1}{c\sqrt{\kappa}}\left(\arctan\left(\frac{2\sqrt{\kappa}}{\kappa-1}\right)+\pi\right)&\textrm{ if }\kappa<1
\end{array}\right.
\end{equation}
Then 
\begin{itemize}
\item if $\beta>\beta^*$, the unique\footnote{Here the uniqueness must be understood up to some subset of zero Lebesgue measure. In other words if $E^*$ is optimal then the union of $E^*$ with any subset of zero measure is also a solution.} solution of \eqref{minishape} is the interval of length $c$ and centered \tred{at} $1/2$,
\item if $\beta<\beta^*$, the solutions  of \eqref{minishape} are exactly $(0,c)$ and $(1-c,1)$,
\item if $\beta=\beta^*$, the solutions  of \eqref{minishape} are exactly all intervals of length $c$,
\end{itemize}
\end{theorem}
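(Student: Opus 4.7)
The plan is to reduce the 1D shape optimization in \eqref{minishape} to a scalar problem over the position $a\in[0,1-\alpha]$ of an interval $E=(a,a+\alpha)$ of prescribed length $\alpha:=c$, and then to analyze this one-parameter problem via the explicit characteristic equation for $\la(a)$. For the reduction, I would first show that any minimizer $E^{*}$ is, up to a set of measure zero, a single interval of length $\alpha$. Since the principal eigenfunction $\ph$ satisfies $\ph''=\la\ph$ on $\Om\setminus E^{*}$ and $\ph''=-\la\ka\ph$ on $E^{*}$, it is strictly convex on $\Om\setminus E^{*}$ and strictly concave on $E^{*}$. The first-order optimality condition produced by a boundary variation of $E^{*}$ forces $\ph^{2}$ to be constant on $\partial E^{*}$; combined with the concavity/convexity structure and the Robin conditions $\ph'(0)=\be\ph(0)$, $\ph'(1)=-\be\ph(1)$, this is incompatible with $E^{*}$ having several connected components, so $E^{*}=(a,a+\alpha)$ for some $a\in[0,1-\alpha]$.

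For such an interval, solving the ODE piecewise, the positive eigenfunction is of the form $A\cosh(\sqrt{\la}x)+B\sinh(\sqrt{\la}x)$ on $(0,a)$, a linear combination of $\cos(\sqrt{\la\ka}(x-a))$ and $\sin(\sqrt{\la\ka}(x-a))$ on $(a,a+\alpha)$, and $F\cosh(\sqrt{\la}(1-x))+G\sinh(\sqrt{\la}(1-x))$ on $(a+\alpha,1)$, with the Robin conditions at $x=0$ and $x=1$ fixing the ratios $B/A$ and $G/F$. The $C^{1}$ matching at $x=a$ and $x=a+\alpha$ then yields a $2\times 2$ linear system whose vanishing determinant is a transcendental equation in $\chp$, $\shp$, $\ca$, $\sa$, $\chm$, $\shm$ determining $\la(a)$. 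The reflection $x\mapsto 1-x$ gives $\la(a)=\la(1-\alpha-a)$, so $a=(1-\alpha)/2$ is always a critical point. I would compute $\partial_{a}\la$ by implicit differentiation and show that the sign of $\partial_{a}\la(0^{+})$, or equivalently the comparison $\la(0)\lessgtr\la((1-\alpha)/2)$, is governed by a single algebraic factor in $(\be,\ka,\alpha)$ whose vanishing yields exactly the threshold $\be^{*}$ of \eqref{eq:beta*}; the three regimes in $\ka$ correspond to the branches of $\arctan$ arising after trigonometric simplification. A strict-monotonicity check on each of $[0,(1-\alpha)/2]$ and $[(1-\alpha)/2,1-\alpha]$ then completes the identification: the centered interval is the unique minimizer for $\be>\be^{*}$, the two endpoint intervals for $\be<\be^{*}$, and all intervals for $\be=\be^{*}$.

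The main difficulty is the reduction to intervals. Classical Schwarz symmetrization centers the interval in the Dirichlet limit $\be=+\infty$, and one-sided rearrangement gives a boundary interval for Neumann $\be=0$, but no classical rearrangement is available for general Robin conditions, so the argument has to combine the first-order optimality condition with the convexity/concavity of $\ph$ across $\partial E$, using the Robin boundary data to rule out multi-interval configurations. A further delicate point arises at $\be=\be^{*}$: to conclude that \emph{all} intervals of length $\alpha$ are minimizers, one must verify that $\la(\cdot)$ is \emph{globally constant} on $[0,1-\alpha]$ (not merely stationary to second order at the midpoint), which requires an explicit algebraic cancellation in the characteristic equation derived above.
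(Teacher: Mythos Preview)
Your outline for the interval optimization (the function $a\mapsto\la_{\be}(a)$, its symmetry, the characteristic equation, and the threshold $\be^{*}$) is sound; in the paper this part is simply quoted from prior work (Proposition~\ref{beta_interval}), so nothing new is proved there. The substantive issue is the reduction to intervals, and here your proposal has a genuine gap.

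You assert that the concavity/convexity structure of $\ph$ (concave on $E^{*}$, convex on $\Om\setminus E^{*}$), together with the constancy of $\ph$ on $\partial E^{*}$ and the Robin data $\ph'(0)=\be\ph(0)$, $\ph'(1)=-\be\ph(1)$, is ``incompatible'' with $E^{*}$ having several components. But this is not justified, and at the qualitative level it is simply false: a two-interval configuration $E^{*}=(a_{1},b_{1})\cup(a_{2},b_{2})$ with $0<a_{1}<b_{1}<a_{2}<b_{2}<1$ produces a $C^{1}$ profile that increases (convex) on $(0,a_{1})$ from $\ph(0)$ with $\ph'(0)=\be\ph(0)>0$ up to $\alpha$, rises above $\alpha$ and returns (concave) on $(a_{1},b_{1})$, dips below $\alpha$ and comes back (convex) on $(b_{1},a_{2})$, rises again (concave) on $(a_{2},b_{2})$, and finally decreases (convex) on $(b_{2},1)$ with $\ph'(1)=-\be\ph(1)<0$. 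Every matching condition and sign constraint is met. Ruling this out would require a quantitative comparison that you do not supply, and the first-order optimality condition gives you nothing further, since any such profile already satisfies it.

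The paper's approach is quite different and bypasses this obstruction. Instead of using optimality conditions, it introduces a two-sided monotone rearrangement adapted to the Robin problem: set $\alpha_{0}:=\min\{\xi:\ph(\xi)=\|\ph\|_{\infty}\}$, and replace $\ph$ and $m^{*}$ by their increasing rearrangement on $(0,\alpha_{0})$ and decreasing rearrangement on $(\alpha_{0},1)$. The point is that this rearrangement keeps $\ph(\alpha_{0})$ fixed, lowers the boundary values $\ph(0)$ and $\ph(1)$ to the respective minima over $[0,\alpha_{0}]$ and $[\alpha_{0},1]$ (so the boundary term $\be\ph(0)^{2}+\be\ph(1)^{2}$ decreases), decreases $\int_{0}^{1}(\ph')^{2}$ by Poly\`a's inequality on each subinterval, and increases the denominator by Hardy--Littlewood. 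Since $m^{*}$ is a minimizer, all inequalities are equalities; the equality case in Poly\`a forces $\ph$ to be unimodal, and hence the superlevel set $E^{*}=\{\ph>\alpha\}$ is a single interval. Your remark that ``no classical rearrangement is available for general Robin conditions'' is therefore exactly the point the paper addresses: the tailored two-sided rearrangement around the maximum of $\ph$ is the new ingredient.
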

This result is clear for $\beta=+\infty$ (i.e. for the Dirichlet case) using symmetrization, and is proven in \cite{LoYa} for $\beta=0$. In the more general situation $\beta\in(0,\infty)$, the minimizer among intervals has been computed in \cite{CantrellCosnerRobin} when $\kappa=1$ and in \cite{MR2886017} for $\kappa>0$. Therefore the previous result rests upon the fact that the optimal set is an interval. We prove this in Section \ref{1d_section}. Our method is based on a symmetrization argument, and is therefore closer to the case $\beta=+\infty$ than the method of \cite{LoYa}. We cannot use Steiner/Schwarz symmetrization since it may not decrease the gradient term $\int_{\Om}|\nabla\varphi|^2$ (except if $\beta=+\infty$ in which case $\varphi\in H^1_{0}(\Om)$). Therefore we use a sort of two sided decreasing rearrangement, whose center is chosen appropriately so that symmetrized functions are still admissible, and which decreases every term in the Rayleigh quotient. Notice that even in the case $\beta=0$, this gives a new proof of the result of \cite{LoYa}, which is more straightforward.
\bigskip

Our {\bf second result} deals with the case $N\geq 2$, and disproves 
the commonly stated conjecture that the ball is a minimizer for certain domains $\Omega$ and certain values of the parameters $\beta$, $\kappa$ and $c$.
This conjecture was also suggested by numerical computations and results (see \cite{Roques-Hamel}).


We prove that the conjecture is false, except maybe for very particular choices of the parameters such as the box $\Omega$. In particular, if $\Om$ is not a ball, a minimizer for \eqref{minishape} cannot be a ball, whatever the value of the parameters $\beta$, $\kappa$ and $c$ are. This means in particular that the optimal set does not minimize the surface area of its boundary.

More precisely, we have the following general result.

\tred{
}
\begin{theorem}\label{theo:optimball} 
Let $N\geq 2$, $\Om$ a domain of $\R^N$ such that its boundary $\partial\Om$ is connected and of class $\mathcal{C}^1$, $(\beta,\kappa,c)\in[0,+\infty]\times(0,+\infty)\times(0,1)$ with $c<\frac{1}{1+\kappa}$ if $\beta=0$, and $E$ an open subset of $\Om$ of measure $|E|=c|\Om|$. 
Assume that either $E$ or $\Om\backslash \overline{E}$ is rotationally symmetric (i.e. a union of concentric rings, whose center is denoted $O$) and has a finite number of connected components.
\begin{itemize}
\item If the set $E$ is a critical point\footnote{This means that $E$ satisfies the necessary first order optimality conditions of Problem \eqref{minishape}, in other words that $E$ is an upper level set of the eigenfunction $\varphi$ associated with the principal eigenvalue $\lambda(E)$
, more precisely that there exists $\alpha$ such that $E=\{\varphi>\alpha\}$, see also Section \ref{ssect:optimality}.} of the optimal design problem \eqref{minishape} then $\Om$ is a ball of center $O$.
\item There exists $\beta_0\geq 0$ such that if $\beta\geq \beta_0$, and if $E$ solves Problem \eqref{minishape}, then $E$ and $\Om$ 
 are concentric balls.
\end{itemize}
\end{theorem}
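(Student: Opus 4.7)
The plan is to split the argument according to which of $E$ or $\Om \setminus \overline E$ is the rotationally symmetric set. The case where $\Om \setminus \overline E$ is a union of concentric rings centered at $O$ is essentially immediate: that set is itself rotationally invariant, so its topological boundary lies in a finite union of concentric spheres centered at $O$, and since (assuming $\overline E \subset \Om$) one has $\partial \Om \subset \partial(\Om \setminus \overline{E})$ and $\partial \Om$ is connected by hypothesis, it must be contained in a single such sphere, which forces $\Om$ to be the ball bounded by it. The substantive case is therefore when $E$ itself is rotationally symmetric, so that $\partial E = \bigcup_{i=1}^{k} S_{r_i}$ is a union of concentric spheres centered at $O$, and the argument proceeds in two steps.

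Step 1 establishes that $\varphi$ is radial throughout $\Om$. The criticality condition $E = \{\varphi > \alpha\}$ forces $\varphi \equiv \alpha$ on every $S_{r_i}$, and in each region between two consecutive spheres the weight $m$ is constant. Decomposing $\varphi$ in spherical harmonics $\varphi(r, \omega) = \sum_{l \geq 0, |q| \leq l} a_{l,q}(r) Y_{l,q}(\omega)$, the constancy of $\varphi$ on each $S_{r_i}$ gives $a_{l,q}(r_i) = 0$ for every $l \geq 1$, while the $C^{1,\alpha}$ regularity of $\varphi$ yields continuity of each $a_{l,q}$ together with its radial derivative across the interfaces. Starting from the innermost region---where regularity at $r = 0$ selects a specific Bessel-type solution of the ODE satisfied by $a_{l,q}$---and propagating outward by ODE uniqueness from the two Cauchy data $(a_{l,q}, a_{l,q}')$, one obtains $a_{l,q} \equiv 0$ on $B_{r_k}$ for every $l \geq 1$. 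For the outer region $\Om \setminus \overline{B}_{r_k}$, the inner trace on $S_{r_k}$ provides analytic, rotationally symmetric Cauchy data ($\varphi = \alpha$ and $\partial_n \varphi$ constant, by the radial structure just inside), so Cauchy--Kowalewski yields a radial analytic solution of $\Delta \varphi - \lambda \varphi = 0$ in a neighborhood of $S_{r_k}$; unique continuation for this elliptic equation then extends radiality to every connected component of $\Om \setminus \overline{B}_{r_k}$, and connectedness of $\Om$ (together with $B_{r_k} \subset \Om$) ensures every such component meets $S_{r_k}$. Hence $\varphi(x) = f(|x - O|)$ throughout $\Om$.

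Step 2 shows that $\partial \Om$ is a sphere centered at $O$. Let $r_{\min}$ and $r_{\max}$ denote the minimum and maximum of $r(x) = |x - O|$ on the compact set $\partial \Om$. At the corresponding extremal boundary points the outward unit normal $n$ coincides with $\hat r$, so the Robin condition yields $f'(r_{\min}) + \beta f(r_{\min}) = 0 = f'(r_{\max}) + \beta f(r_{\max})$. Since $\varphi$ is the positive principal eigenfunction (the Hopf lemma for the Robin boundary condition giving $\varphi > 0$ up to $\partial \Om$), the radial function $f$ is positive on $[0, r_{\max}]$, and I would then identify $f$ as the principal eigenfunction of the radial Robin eigenvalue problem on both balls $B_{r_{\min}}$ and $B_{r_{\max}}$ (with the natural radial extension of $m$), both with the same eigenvalue $\lambda$. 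Strict monotonicity of the principal Robin eigenvalue with respect to the ball radius---which in our setting reflects the fact that enlarging the ball adds a region where $m = -1$---then forces $r_{\min} = r_{\max}$, so $\partial \Om \subset S_{r_{\max}}$, and by boundedness and connectedness $\Om$ is the ball bounded by this sphere. For the second bullet, the plan is to combine the first bullet with a continuity/compactness argument as $\beta \to +\infty$: the Dirichlet limit (accessed via Proposition~\ref{prop:asympDiri}) admits only a ball as minimizer by Schwarz symmetrization, and a compactness argument on $\mathcal{E}_{c,\kappa}$ should yield $\beta_0$ such that for $\beta \geq \beta_0$ every minimizer $E$ is a ball, hence rotationally symmetric with finitely many components; the first bullet then identifies $\Om$ with a concentric ball. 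The main obstacle lies in Step 2 of the first bullet, and specifically in the strict monotonicity of the principal Robin eigenvalue on balls with our indefinite radial weight: this is where one must combine the shooting/ODE structure of the radial problem with the positivity of $f$ on $[0, r_{\max}]$ to rule out distinct radii $r_{\min} < r_{\max}$ producing the same eigenvalue.
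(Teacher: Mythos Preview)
Your shortcut for the case where $\Omega\setminus\overline E$ is rotationally symmetric is incorrect. You assume $\overline E\subset\Omega$, but nothing in the hypotheses forces this: take any $C^1$ domain $\Omega$ containing $O$, pick a small ball $B_\rho(O)\subset\Omega$, and set $E=\Omega\setminus\overline{B_\rho}$. Then $\Omega\setminus\overline E=B_\rho$ is a single centered ball, yet $\Omega$ is arbitrary. The criticality condition on $E$ is essential here too, and the paper indeed reruns the full radiality argument in this case (with the roles of $E$ and its complement swapped).

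Your Step 1 has a genuine gap. From $\varphi\equiv\alpha$ on $S_{r_1}$ you get $a_{l,q}(r_1)=0$ for $l\geq1$, and regularity at $r=0$ cuts the solution space of the radial ODE down to one dimension; but if the innermost ball lies in $E$ (where $-\Delta\varphi=\lambda\kappa\varphi$), the regular solution is proportional to $r^{-(N-2)/2}J_{l+(N-2)/2}(\sqrt{\lambda\kappa}\,r)$, and nothing you wrote rules out $\sqrt{\lambda\kappa}\,r_1$ being a zero of this Bessel function. (If the innermost ball lies in $\Omega\setminus E$ your argument does work, since the modified Bessel function $I_\nu$ has no positive zeros.) The paper closes this gap by a different mechanism: it sets $v_{ij}=x_i\partial_{x_j}\varphi-x_j\partial_{x_i}\varphi$, observes that $v_{ij}$ solves $-\Delta v_{ij}=\lambda\kappa v_{ij}$ in $E$ with $v_{ij}=0$ on $\partial E\setminus\partial\Omega$ and the Robin condition on $\partial E\cap\partial\Omega$, and then uses the \emph{simplicity} of $\lambda$ in the Rayleigh formulation: if $v_{ij}\not\equiv0$ in $E$, its extension by zero is an admissible test function achieving the minimum $\lambda$ without being a multiple of $\varphi$, a contradiction. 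Your ODE/propagation picture is then used only afterwards, to pass from $E$ to $\Omega\setminus\overline E$ via Cauchy--Kowalevski and analyticity.

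In Step 2 you correctly flag the monotonicity of the radial Robin eigenvalue as the crux, but you do not prove it, and for indefinite weights it is not standard. The paper avoids this entirely: after showing $\varphi$ is radial, it exploits the Robin condition not only at the extremal radii $a=r_{\min}$ and $b=r_{\max}$ but at \emph{every} $r\in[a,b]$ (each such sphere meets $\partial\Omega$ by connectedness), obtaining $V(r):=-U'(r)/U(r)=\beta/(\mathcal N\cdot n)\geq\beta$ with equality at $a$ and $b$. A direct computation gives $V'=-\lambda-\frac{N-1}{r}V+V^2$, and comparing the signs of $V'(a)$ and $V'(b)$ forces $a=b$ for every $\beta>0$. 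Finally, your plan for the second bullet is off-track: a compactness argument on minimizers as $\beta\to\infty$ does not yield that minimizers are balls for large finite $\beta$. The paper first applies the first bullet (so $\Omega$ is a ball), then proves that for $\beta$ large the minimum of $\varphi_\beta$ is attained on $\partial\Omega$ (via $C^{1,\alpha}$ convergence to the Dirichlet eigenfunction), which is exactly what is needed to run Schwarz rearrangement on $\varphi_\beta-\varphi_\beta|_{\partial\Omega}$ and conclude that $E$ must be the centered ball.
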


To our best knowledge, this result is completely new, even if $\beta=0$ or $\beta=+\infty$.
Theorem \ref{theo:optimball}  lets open the issue of knowing whether the optimal configuration $E$ is rotationally symmetric. 

Note that when $\partial\Om$ is disconnected, it is likely that the result is not valid anymore. For instance, if $\Om$ is an annulus, one would expect the existence of a rotationally symmetric critical set $E$.

The assumption on the finite number of connected components for $E$ ensures that $\partial E \cap \Om$ is analytic, which is crucial in our proof. It could thus be replaced by an analyticity assumption on $\partial E \cap \Om$.

Note that our result is also interesting if $\Om$ is a ball. 
It asserts that the only rotational symmetric domain which is a candidate for optimality is the centered ball whenever the parameter $\beta$ is large enough. It implies in particular that an annulus cannot be a minimizer, even if $\beta=0$.






The proof of Theorem \ref{theo:optimball} uses the first order optimality condition, namely that $\varphi$ is constant on $\partial E$, to infer that $\varphi$ is necessarily radial (i.e. $\varphi$ is a function of $|x|$) on the whole domain $\Om$. 
To that end, we built particular test functions that can be interpreted as angular derivatives of the function $\varphi$. 

Then, we rewrite the problem as an optimization problem bringing into play only functions of the polar variable $r$. This allows to conclude that $\Om$ must be a ball, proving that the associated eigenvalue on the largest inscribed ball and the smallest circumscribed ball are the same. 

The second part of the result is proven by using a symmetrization argument, which, as for Theorem \ref{Robin_connected}, works for large values of the parameter $\beta$ and despite the lack of the usual hypotheses for this kind of argument.



We also underline here that the converse of Theorem \ref{theo:optimball} is not true. More precisely, the radial symmetry of $\Om$ does not imply that a similar symmetry will hold for the minimizing set $E^{*}$. Indeed, an analytical example which shows that a radially symmetric set $E$ cannot be a minimizer has been provided in \cite[Theorem 2.5]{JhaPorru} when $\Om$ is a thin and large annulus, for Neumann boundary conditions. 
Even for Dirichlet boundary conditions, symmetry breaking can occur. In \cite{CGIKO}, this phenomenon is observed and explicit examples are provided for a closely related problem.

Finally, let us highlight that we prove in the second step of Theorem \ref{theo:optimball} the following interesting byproduct: among the set of rotationally symmetric open subsets $E$ of $\Omega$ of prescribed measure, the centered ball is the only minimizer for $\beta$ large enough.

\begin{proposition}\label{prop:geomprop}
Let $N\geq 2$, $\Om$ be the $N$-dimensional unit ball of $\R^N$ centered at the origin, $(\beta,\kappa,c)\in[0,+\infty]\times(0,+\infty)\times(0,1)$ with $c<\frac{1}{1+\kappa}$ if $\beta=0$. Let $E$ be a rotationally symmetric and concentric open subset of $\Om$ of measure $|E|=c|\Om|$. Then, any eigenfunction $\varphi$ associated with $\lambda(E)$ is radial. Moreover, there exists $\beta_0>0$ such that there holds $\lambda(E)\geq \lambda(E^S)$ for every $\beta\geq \beta_0$, where $E^S$ denotes the centered ball of volume $c|\Om|$, and $\lambda(E)= \lambda(E^S)$ if and only if $E=E^S$.
\end{proposition}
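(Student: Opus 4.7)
Since $\Om$ is a ball centred at the origin and $E$ is a union of concentric rings, the weight $m_E=\ka\mathbbm{1}_E-\mathbbm{1}_{\Om\backslash E}$ and the Robin Laplacian are invariant under every rotation about the origin. Hence $\varphi\circ R$ is a positive principal eigenfunction for $\la(E)$ whenever $\varphi$ is. Simplicity of $\la(E)$ and positivity force $\varphi\circ R=\varphi$ after $L^2$-normalization, so $\varphi$ is radial; every other eigenfunction for $\la(E)$ is a scalar multiple of this one, hence also radial. Write $g:[0,1]\to\R$ for the radial profile, $\varphi(x)=g(|x|)$.

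\noindent\textbf{The rearrangement.} Let $\varphi^\#$ be the Schwarz (radially decreasing) rearrangement of $\varphi$ on $\Om$, whose radial profile $g^\#$ is the decreasing rearrangement of $g$ with respect to $r^{N-1}\,dr$. The plan is to use $\varphi^\#$ as a test function for $\la(E^S)$ in~\eqref{eq:variational} and to bound $\Re_{m_{E^S}}[\varphi^\#]$ by $\Re_{m_E}[\varphi]=\la(E)$. Three ingredients enter: (i) Pólya-Szegő on the ball, $\int_\Om|\nabla\varphi^\#|^2\leq\int_\Om|\nabla\varphi|^2$; (ii) the decomposition $\int_\Om m_E\varphi^2=(\ka+1)\int_E\varphi^2-\int_\Om\varphi^2$ together with Hardy-Littlewood (since $|E|=|E^S|$ and $(\varphi^\#)^2$ is equimeasurable with $\varphi^2$), yielding $\int_\Om m_E\varphi^2\leq\int_\Om m_{E^S}(\varphi^\#)^2$ and the admissibility $\varphi^\#\in\mathcal{S}(m_{E^S})$; (iii) the boundary term, where radiality of $\varphi$ is essential: $\int_{\partial\Om}\varphi^2=|\partial\Om|g(1)^2$, while $\varphi^\#|_{\partial\Om}=g^\#(1)=\mathrm{ess\,inf}_{[0,1]} g\leq g(1)$, so $\int_{\partial\Om}(\varphi^\#)^2\leq\int_{\partial\Om}\varphi^2$. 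Combining, $\la(E^S)\leq\Re_{m_{E^S}}[\varphi^\#]\leq\Re_{m_E}[\varphi]=\la(E)$.

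\noindent\textbf{Equality case, and the role of $\beta_0$.} For the uniqueness assertion, $\la(E)=\la(E^S)$ would force all three inequalities above to be equalities. The Brothers-Ziemer characterization of equality in Pólya-Szegő, combined with the observation that $-\Delta\varphi=\la m_E\varphi$ with $m_E\neq 0$ a.e.\ prevents $\varphi$ from being locally constant on any open set, yields $\varphi=\varphi^\#$; thus $g$ is strictly decreasing, and equality in Hardy-Littlewood identifies $E$ with the super-level set $\{g(|x|)>g(c^{1/N})\}=E^S$. The delicate step—and, I expect, the source of the restriction $\beta\geq\beta_0$—is the boundary contribution: although radiality controls it in the useful direction here, the symmetrization argument lacks the usual hypotheses in the Robin setting, exactly as in Theorem~\ref{Robin_connected}, where a large-$\beta$ threshold appears for analogous reasons. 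Quantifying $\beta_0$ explicitly, and making sure that no information is lost at the boundary in the equality analysis, is where I expect the technical crux of the proof to lie.
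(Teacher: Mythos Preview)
Your radiality argument via rotation invariance and simplicity of $\la(E)$ is correct and in fact more direct than the paper's, which reaches the same conclusion by showing that the angular derivatives $v_{ij}=x_i\partial_{x_j}\varphi-x_j\partial_{x_i}\varphi$ would otherwise be sign-definite principal eigenfunctions, contradicting their periodicity in the angular variable.

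The genuine gap lies in ingredient (i). You invoke P\'olya--Szeg\H{o} as if it held for every radial $\varphi\in H^1(\Om)$, but this is false: when the minimum of $g$ is attained in the interior of $[0,1]$, the Schwarz rearrangement $\varphi^\#$ need not even belong to $H^1(\Om)$. Concretely, take $g(r)=r$ on $[0,1]$ (so $\varphi(x)=|x|$); then $g^\#(r)=(1-r^N)^{1/N}$ and one checks that $\int_0^1|(g^\#)'(r)|^2 r^{N-1}\,dr=+\infty$ for every $N\geq 2$. Thus (i) can fail outright without extra information on $\varphi$. You have therefore misplaced the obstruction: the boundary term $\beta\int_{\partial\Om}\varphi^2$ is, exactly as you argue, always controlled by radiality through $g^\#(1)=\mathrm{ess\,inf}\,g\leq g(1)$, and it plays no role in the threshold $\beta_0$.

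The paper's remedy is to subtract the constant boundary value and apply P\'olya--Szeg\H{o} to $\varphi-\varphi|_{\partial\Om}$, which lies in $H^1_0(\Om)$ and is nonnegative \emph{provided} $\min_{\overline\Om}\varphi=\varphi|_{\partial\Om}$. This is precisely where $\beta_0$ enters: a separate lemma establishes, via $C^{1,\alpha}$ convergence of $\varphi_\beta$ to the strictly radially decreasing Dirichlet eigenfunction $\varphi_\infty$ as $\beta\to+\infty$, that the minimum of $\varphi_\beta$ is attained on $\partial\Om$ for all sufficiently large $\beta$. Once this is in hand, your ingredients (ii) and (iii) and your equality analysis are correct and coincide with the paper's argument.
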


\bigskip

Our {\bf third result} is motivated by Theorem \ref{theo:optimball} which asserts in particular that a ball is a candidate for optimality only if $\Omega$ itself is a concentric ball. It remains to decide, in the case where $\Om$ is a ball, whether the centered ball is optimal or not. In the case $\beta=+\infty$ it is actually the case (classically, by using the so-called Schwarz symmetrization), but we expect that it is not the case for every values of $\beta$. 
We prove that the centered disk is not optimal in the case $\beta=0$ (Neumann boundary condition) and for $N\geq 2$.

\begin{theorem}\label{thm3_simple}
Let $\kappa>0$, $N = 2,3,4$, and $c\in(0,\frac{1}{1+\kappa})$. Assume $\beta = 0$ and $\Om = B(0,1)\subset\R^N$ is the disk of radius $1$ centered at the origin.
Then the centered ball of volume $c|\Om|$ is not a minimizer for Problem \eqref{minishape}.
\end{theorem}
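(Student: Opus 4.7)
The plan is to show that the centered ball $E^S = B(0,r_c)$ of volume $c|\Omega|$ is not a minimizer by exhibiting a strictly better competitor. First, by uniqueness of the principal eigenfunction, the eigenfunction $\varphi$ associated with $\lambda(E^S)$ inherits the rotational symmetry of $(\Omega,E^S)$, so $\varphi$ is radial and in particular constant on $\partial E^S$. Hence $E^S$ already satisfies the first-order optimality condition described in Section \ref{ssect:optimality}, and critical-point information alone cannot suffice: one must construct an admissible set of lower eigenvalue by hand, in the spirit of the ``new rearrangement'' announced in the abstract.

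Writing $\varphi(x)=u(|x|)$, the radial equation
\begin{equation*}
(r^{N-1}u')' + \lambda\, m(r)\, r^{N-1} u = 0, \qquad u'(0)=u'(1)=0,
\end{equation*}
combined with the change of sign of $m$ at $r=r_c$, implies that $u$ is strictly monotone on $(0,1)$. Thus $E^S$ is exactly a superlevel set of $\varphi$ and its layer-cake decomposition is indexed by nested concentric balls. I would then define a non-symmetric function $\tilde\varphi\in H^1(\Omega)$ by replacing each centered superlevel ball $\{\varphi>t\}$ by a ball of the same measure internally tangent to $\partial\Omega$ at a fixed boundary point $e$, keeping the family nested so that $\tilde\varphi$ is equimeasurable with $\varphi$ and the associated upper level set $\tilde E:=\{\tilde\varphi>u(r_c)\}$ has volume $c|\Omega|$ and is therefore admissible. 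Equimeasurability, together with the fact that $\tilde E$ is by construction the correct level set of $\tilde\varphi$, preserves the mass term $\int_\Omega m_{\tilde E}\tilde\varphi^2$; it then suffices to prove the strict inequality $\int_\Omega|\nabla\tilde\varphi|^2 < \int_\Omega|\nabla\varphi|^2$, which by the variational formulation \eqref{eq:variational} yields $\lambda(\tilde E)\le \mathcal{R}_{m_{\tilde E}}[\tilde\varphi] < \lambda(E^S)$.

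The decrease of Dirichlet energy is where the restriction $N\in\{2,3,4\}$ enters. By the co-area formula, the difference of energies is an integral over level values $t$ of a perimeter-type quantity: the centered level spheres contribute their whole surface, whereas the off-center level balls share a portion of their boundary with $\partial\Omega$, a contribution which is ``free'' in the Neumann setting. Quantifying this gain leads to an explicit dimension-dependent integrand involving $u'$ and the measure of spherical caps, whose sign is favorable precisely in low dimension. The main obstacle is performing this comparison sharply: one must simultaneously check the Sobolev regularity of $\tilde\varphi$ across the nested-ball interfaces, verify the volume identity $|\tilde E|=c|\Omega|$, and isolate the dimensional quantity whose sign fixes the threshold $N\leq 4$ (beyond which the surface area of the caps no longer compensates the bulk cost). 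Once this estimate is established, the chain of inequalities above closes the proof.
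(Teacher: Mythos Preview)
Your construction has a fatal geometric flaw. A ball $B((1-r)e,r)$ internally tangent to $\partial\Omega$ at a fixed boundary point $e$ touches $\partial\Omega$ only at the single point $e$; its level sphere does \emph{not} share a positive-measure portion with $\partial\Omega$, so there is no ``free'' boundary contribution in the Neumann sense. Worse, writing $\tilde\varphi=u\circ\rho$ with $\rho(x)=\tfrac{|x-e|^2}{2(1-\langle x,e\rangle)}$, one computes $|\nabla\rho|=\rho/(1-x_1)$ and, via co-area, the Dirichlet energy of $\tilde\varphi$ becomes $\int_0^1 u'(r)^2\,r^{N-1}\Big(\int_{S^{N-1}}\frac{d\omega}{1-\omega_1}\Big)\,dr$. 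The angular integral behaves like $\int_0 \theta^{\,N-4}\,d\theta$ near the tangency point and therefore \emph{diverges} for $N=2,3$, so $\tilde\varphi\notin H^1(\Omega)$ in exactly the dimensions you want. Even when it converges ($N\ge 4$), Jensen's inequality for the convex map $t\mapsto (1-t)^{-1}$ with $\mathbb{E}[\omega_1]=0$ gives $\int_{S^{N-1}}(1-\omega_1)^{-1}\,d\omega>|S^{N-1}|$, so your rearrangement strictly \emph{increases} the gradient term. The proposed mechanism therefore cannot produce the desired inequality in any dimension.

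The paper proceeds by an entirely different deformation: it takes the restriction of $(m,\varphi)$ to the half-ball $\{x_1>0\}$ and stretches it in the $x_1$-direction by the affine map $x_1\mapsto\tfrac{x_1+f(x')}{2}$, $f(x')=\sqrt{1-|x'|^2}$, to fill all of $\Omega$. This transformation preserves $\int m\varphi^2$ and the volume of $E$ exactly, and the $x_1$-derivative picks up a factor $1/2$. The cross terms $\partial_{x_i}f\,\partial_{x_1}\varphi$ and $\partial_{x_i}\varphi$ have opposite signs because $\varphi$ is radial, which allows an explicit bound yielding $\int|\nabla\widehat\varphi|^2<\tfrac{5N-4}{4N}\int|\nabla\varphi|^2$; this constant is $\le 1$ precisely for $N\le 4$, and the strict inequality gives the conclusion. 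The dimensional threshold thus comes from a concrete quadratic estimate involving Wallis integrals, not from a perimeter/cap-area comparison as you suggest.
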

This result is a particular case of the more general result stated in Theorem \ref{th:deformation}: we use a non-local deformation that decreases strongly the value of $\lambda$, more precisely if $E$ is a centered ball (or more generally a radially symmetric set), we build a set $\widehat{E}$ that ``sticks'' on the boundary of $\partial\Om$ and satisfies 
\begin{equation}
\label{eq:estimate_lambda_hat}\lambda(\widehat{E}) < c_N\lambda(E).
\end{equation}
with $$c_N = \frac{5N-4}{4N}.$$
We compute $c_2=6/8, c_3 = 11/12, c_4=1$ which yields Theorem  \ref{thm3_simple}. For $N>4$, we have $c_N>1$ so we cannot conclude from the estimate \eqref{eq:estimate_lambda_hat}.
For $\beta>0$, the situation is unclear: first, it seems our strategy cannot be adapted, even if $\beta$ is small, though it is reasonable to expect that the centered ball is not a solution in that case. For $\beta$ large, we do not know whether the situation is similar to the 1-dimensional case (that is there exists $\beta^*$, possibly depending on $\kappa$ and $c$, such that for $\beta>\beta^*$ the solution is a centered ball, in other words the same as if $\beta=+\infty$) or if it can be proven that the centered ball is a solution only if $\beta=+\infty$.

%



The article is organized as follows: in Section \ref{gen}, we provide some explanations about the biological model motivating our study, as well as a short survey on several existing results related to the problem we investigate and similar ones. Section \ref{1d_section} is devoted to the proof of Theorem \ref{Robin_connected}, solving completely Problem \eqref{minishape} in the case where $\Omega=(0,1)$ with Robin boundary conditions. The whole section \ref{geom_prop} is devoted to proving Theorem \ref{theo:optimball}. Finally, in Section \ref{sec:applications}, we provide qualitative properties of the minimizers of Problem \eqref{minishape} in the particular cases where Neumann boundary conditions are considered, $\Omega$ is a $N$-orthotope or a two dimensional euclidean disk. In this last case, we prove in Theorem \ref{th:deformation} a quantitative estimate showing symmetry-breaking for the minimizers. This allows in particular to derive Theorem \ref{thm3_simple}. All these results are illustrated by numerical simulations.


\section{Preliminaries and State of the art}\label{gen}

In this section, we gather several known facts about Problem \eqref{mini}, from the biological motivation of the model to deep and technical results about minimizers, mainly for two reasons. 
First of all, we will use several known results in our proofs, therefore we want to recall them for the convenience of the reader. 
Second, we want to highlight the novelty of our results, even when we will be led to state results for certain choices of parameters (such as $\Om$, $\beta$, the dimension, and so on).

\subsection{Biological model}\label{ssect:bio}
The main biological motivation for studying extremal properties of the principal eigenvalue
$\lambda=\lambda(m)$ with respect to the weight $m$ comes from the diffusive  logistic equation
\begin{equation}\label{eq: logistic}
\left\{
\begin{array}{ll}
u_t=\Delta u+\omega u [m(x)-u] \quad &\mbox{in } \Omega\times
\R^+,
\vs \\
\dn u+\beta u =0 \quad &\mbox{on }\partial\Omega\times \R^+,
\vs \\
u(0,x)\ge 0, \quad u(0,x)\not\equiv 0 \ \ &\mbox{in}\
\overline\Omega,
\end{array}
\right.
\end{equation}
introduced in \cite{MR0043440}, where $u(t,x)$ represents the density of a species at location $x$ and time $t$, and $\omega$
is a positive parameter. Concerning the boundary conditions on $\Omega$, the case $\beta=0$ corresponds to Neumann or no-flux boundary condition, meaning that the boundary acts as a barrier, i.e. any individual reaching the boundary returns to the interior. The case $\beta=+\infty$ corresponds to Dirichlet conditions and may be interpreted as a deadly boundary, i.e. the exterior environment is completely hostile and any individual reaching the boundary dies. For intermediate values $0<\beta<+\infty$, we are in the situation where the domain $\Om$ is surrounded by a partially inhospitable region, where inhospitableness grows with $\beta$. The weight $m$ represents the intrinsic growth rate of species: it is positive in the favorable part of habitat ($\Omega^+_{m}=\{m>0\}$) and negative in the unfavorable one ($\Omega^-_{m}=\left\{m<0\right\}$). The integral
of $m$ over $\Om$ measures the total resources in a spatially heterogeneous environment.

The logistic equation (\ref{eq: logistic}) plays an important role
in studying the effects of dispersal and spatial heterogeneity in
population dynamics; see, e.g. \cite{MR1014659,MR1112065,  MR2191264} and
the references therein. It is known that if $\omega \le \lambda(m)$,
then $u(t,x)\to 0$ uniformly in $\overline\Omega$ as $t\to \infty$
for all non-negative and non-trivial initial data, i.e., the species
go to extinction; if, however, $\omega>\lambda(m)$, then $u(t,x)\to u^*(x)$
uniformly in $\overline\Omega$ as $t\to \infty$, where $u^*$ is the
unique positive steady solution of  (\ref{eq: logistic}), i.e., the species survives. 


Since the species can be maintained if and only if
$\omega>\lambda(m)$, we see that the smaller $\lambda(m)$ is, the
more likely the species can survive. With this in mind, the
following question was raised and addressed by Cantrell and Cosner in
\cite{MR1014659,MR1112065}: {\it among all functions $m\in\mathcal{M}_{m_{0},\kappa}$, which $m$ will yield the smallest principal
eigenvalue $\lambda(m)$}, whenever it exists? From the biological point of view, finding
such a minimizing function $m$ is equivalent to determining the
optimal spatial arrangement of the favorable and unfavorable patches
of  the environment for species to survive. This
issue is important for public policy decisions on conservation of
species with limited resources.

\subsection{Other formulation}

In this section, we address a closely related optimal design problem. Let $(\mu_{-},\mu_{+})\in\R^2$ such that $\mu_{-}<0<\mu_{+}$. For $ \mu\in L^{\infty}(\Omega;[\mu_{-},\mu_{+}])$, the classical reaction-diffusion model in homogeneous environments of Fisher, Kolmogorov et al. \cite{Fisher,Kolmogorov} generalizes as:
\begin{equation}\label{eq: logistic2}
\left\{
\begin{array}{ll}
v_t=\Delta v+ v [\mu(x)-\nu(x) v] \quad &\mbox{in } \Omega\times
\R^+,
\vs \\
\dn v+\beta v =0 \quad &\mbox{on }\partial\Omega\times \R^+,
\vs \\
v(0,x)\ge 0, \quad v(0,x)\not\equiv 0 \ \ &\mbox{in}\
\overline\Omega,
\end{array}
\right.
\end{equation}
where $v(t,x)$ represents the population density at time $t$ and position $x$. The function $\mu$ stands for the intrinsic grow rate of the species whereas the function $\nu$ is the susceptibility to crowding and is chosen in $L^\infty(\Omega)$ and such that $\mathrm{essinf} \nu >0$.

According to \cite{BHR,Roques-Hamel} and similarly to the previous model, a necessary and sufficient condition of species survival writes $\gamma(\mu)<0$, where $\gamma(\mu)$ denotes the principal eigenvalue associated with the elliptic problem
\begin{equation}\label{eq:EV_main2}
\left\{\begin{array}{ll}
-\Delta \psi = (\mu(x) +\gamma) \psi &\mbox{ in }\Omega \\
\partial_n \psi +\beta \psi= 0 &\mbox{ on }\partial\Omega.
\end{array}\right.
\end{equation}
It is notable that this condition does not depend on the function $\nu(\cdot)$.

The principal eigenvalue $\gamma(m)$ of \eqref{eq:EV_main2} is unique, nonnegative and given by 
\begin{equation}
\label{eq:variational2}\gamma (\mu)=\inf_{\substack{\psi \in H^1(\Omega)\\ \psi \neq 0}}\frac{\int_\Om |\nabla\psi |^2-\int_{\Omega}\mu \psi^2 + \beta\int_{\partial\Omega}\psi^2}{\int_\Om\psi^2} .
\end{equation}
Moreover, $\gamma(\mu)$ is simple, and the infimum is attained only by associated eigenfunctions that do not change sign in $\overline{\Om}$.

As previously, a similar analysis of the biological model leads to the study of the following optimal design problem.

\begin{quote}
\noindent{\bf Optimal rearrangement of species problem, equivalent formulation.}
\textit{Given $(\mu_{-},\mu_{+})\in \R^2$ such that $\mu_{-}<0<\mu_{+}$ and ${\mu}_{0}\in (\mu_{-},\mu_{+})$, we are interested in 
\begin{equation}\label{mini2} 
\inf\left\{ \gamma(\mu) ;\;\;\mu\in L^\infty (\Omega;[\mu_{-},\mu_{+}])\textrm{ such that }|\Omega^+_{\mu}|>0,\textrm{ and }
 \int_{\Omega} \mu\le - \mu_0|\Omega|\right\}.
\end{equation}
}
\end{quote}

Following the same approach as for Problem \eqref{mini}, it is standard to prove that Problem \eqref{mini2} has a solution $\mu^*$ which is a bang-bang function:
$\mu_{+} \mathbbm{1}_{E^*}+\mu_{-}\mathbbm{1}_{\Omega\backslash E^*}$,
and the volume constraint is active.

Note that Problems \eqref{mini} and \eqref{mini2} have been considered independently in the literature on optimal arrangement of ressources for species survival. In \cite{CGIKO} these two similar problems have been investigated and it is shown that they are equivalent in a sense recalled below. The main difference with our case is, roughly speaking, that the weight $\mu(x)+\gamma$ is positive in \cite{CGIKO}.  However as far as the equivalence of   \eqref{mini} and \eqref{mini2} is concerned, the proof is the same and we recall the result here, for mainly two reasons: 
firstly because it allows us to use certain results from both literatures, and secondly, because while our statements and proofs deal with formulation \eqref{mini}, our new results are actually also valid for solutions of \eqref{mini2}.

\begin{theorem}(\cite[Theorem 13]{CGIKO}, Equivalence between the two formulations)\label{theo:compare}
Let $\Omega$ a bounded domain and $\beta\in\R_{+}$.
\begin{itemize}
\item Let $\kappa>0$ and $m_{0}\in(-\kappa,1)$, with in addition $m_0>0$ if $\beta =0$. Assume that $E^*$ is a solution of Problem \eqref{minishape} and let $\lambda^{*}=\lambda(\kappa \mathbbm{1}_{E^*}-\mathbbm{1}_{\Omega\backslash E^*})$ be the minimal eigenvalue. 
Then, $\mu_{E^*}=\mu_{+}\mathbbm{1}_{E^*}+\mu_{-}\mathbbm{1}_{\Om\backslash E^*}$ is a solution of Problem \eqref{mini2} with parameters $\mu_{-}=-\lambda^{*}$, $\mu_{+}=\kappa \lambda^{*}$, $\mu_{0}=\lambda^{*}m_{0}$, and moreover $\gamma^{*}=\gamma(\mu_{E^*})=-\mu_{-}-\lambda^{*}$.
\item Conversely, let $\mu_{-}<\mu_{+}$, $\mu_{0}\in(-\mu_{+},-\mu_{-})$, let $\mu=\mu_{E^*}$ be a bang-bang solution of Problem \eqref{mini2}, and let $\gamma^{*}=\gamma(\mu_{+} \mathbbm{1}_{E^*}+\mu_{-}\mathbbm{1}_{\Omega\backslash E^*})$ be the minimal eigenvalue. 
Then, $m_{E^*}=\kappa \mathbbm{1}_{E^{*}}-\mathbbm{1}_{\Om\backslash E^{*}}$ is a solution of Problem \eqref{mini} with parameters $\kappa=-\frac{\gamma^{*}+\mu_{+}}{\gamma^{*}+\mu_{-}}$,  $m_{0}=\frac{\mu_{0}-\gamma^{*}|\Omega|}{\lambda^{*}}$, and moreover $\lambda^{*}=\lambda(m_{E^{*}})=-\mu_{-}-\gamma^{*}$.
\end{itemize}
\end{theorem}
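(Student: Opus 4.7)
The plan is to reduce both directions of the equivalence to a single algebraic identity between the two eigenvalue problems for bang-bang weights, together with a variational comparison between $\lambda(m)$ and $\gamma(\mu)$. The most delicate ingredient is the classical reduction to bang-bang competitors for Problem \eqref{mini2}, which is needed in both directions to restrict the test class to weights parametrized by a set $E$.

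The key observation is that, for any measurable set $E$ and any bang-bang weight $m_E=\kappa\mathbbm{1}_E-\mathbbm{1}_{\Om\setminus E}$, the pointwise identity $\lambda m_E = \mu_E+\gamma$ holds exactly when $\lambda = -\mu_--\gamma$ and $\kappa = -(\mu_++\gamma)/(\mu_-+\gamma)$. Thus the eigenvalue problem $-\Delta\varphi=\lambda m_E\varphi$ in $\Om$ coincides with $-\Delta\psi=(\mu_E+\gamma)\psi$ (both endowed with the same Robin condition). Since $\lambda(m_E)$ and $\gamma(\mu_E)$ are both characterized as the unique eigenvalues admitting a positive eigenfunction, this sets up a bijection between the principal eigenpairs of \eqref{eq:EV_main} and \eqref{eq:EV_main2}. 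In particular, under the first item's choice $\mu_-=-\lambda^*$, $\mu_+=\kappa\lambda^*$, one has $\mu_{E^*}=\lambda^*m_{E^*}$, so the positive eigenfunction of \eqref{eq:EV_main} for $(m_{E^*},\lambda^*)$ is also a positive eigenfunction of \eqref{eq:EV_main2} with $\gamma(\mu_{E^*})=0=-\mu_--\lambda^*$.

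From the two Rayleigh quotients \eqref{eq:variational} and \eqref{eq:variational2} one reads off directly the sign comparison: for any $\lambda\geq 0$ and any admissible $E$, $\gamma(\lambda m_E)\geq 0$ if and only if $\lambda\leq\lambda(m_E)$. Indeed, $\gamma(\lambda m_E)\geq 0$ amounts to $\int_\Om|\nabla\psi|^2+\beta\int_{\partial\Om}\psi^2\geq \lambda\int_\Om m_E\psi^2$ for every $\psi\in H^1(\Om)$, which is automatic when $\int_\Om m_E\psi^2\leq 0$ and, when $\int_\Om m_E\psi^2>0$, is equivalent via \eqref{eq:variational} to $\lambda\leq\lambda(m_E)$. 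For the first direction I would then verify that the two volume constraints coincide, since the inequalities $(\kappa+1)|E|-|\Om|\leq-m_0|\Om|$ and $(\mu_+-\mu_-)|E|+\mu_-|\Om|\leq-\mu_0|\Om|$ become identical under the proposed choice $\mu_0=\lambda^*m_0$, both reducing to $|E|\leq c|\Om|$ with $c=(1-m_0)/(1+\kappa)$. After invoking the bang-bang reduction for \eqref{mini2} (obtained by swapping infima in $\gamma(\mu)=\inf_\psi R(\psi,\mu)$ and linearizing the functional $\int_\Om\mu\psi^2$ on the convex admissible class in $\mu$), any bang-bang competitor $\widetilde\mu=\mu_+\mathbbm{1}_{\widetilde E}+\mu_-\mathbbm{1}_{\Om\setminus\widetilde E}$ with $|\widetilde E|\leq c|\Om|$ satisfies $\widetilde\mu=\lambda^*m_{\widetilde E}$; the minimality of $E^*$ in \eqref{minishape}, giving $\lambda(m_{\widetilde E})\geq\lambda^*$, combined with the sign comparison above, yields $\gamma(\widetilde\mu)\geq 0=\gamma(\mu_{E^*})$, proving the first item.

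The second direction proceeds symmetrically: starting from a bang-bang minimizer $\mu_{E^*}$ of \eqref{mini2} with minimum $\gamma^*$, set $\lambda^*=-\mu_--\gamma^*$ and $\kappa=-(\mu_++\gamma^*)/(\mu_-+\gamma^*)$, positivity of both being a consequence of the admissibility of the identification, namely $\gamma^*\in(-\mu_+,-\mu_-)$. The key identity then forces $\lambda(m_{E^*})=\lambda^*$, a short bookkeeping computation translates the integral constraint into a corresponding $m_0$ of the form $(\mu_0-\gamma^*)/\lambda^*$, and the minimality of $\mu_{E^*}$ is transferred to $m_{E^*}$ through the same sign comparison argument applied in the reverse direction.
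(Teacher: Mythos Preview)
The paper does not contain its own proof of this theorem: it is stated as a citation of \cite[Theorem~13]{CGIKO}, with the remark that ``the proof is the same'' in the indefinite-weight setting, and no argument is reproduced. So there is nothing to compare your approach against inside the paper itself.

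Your proposal is essentially the standard proof and is correct in outline. The heart of the matter is precisely the algebraic identity $\lambda m_E=\mu_E+\gamma$ for bang-bang weights under the dictionary $\lambda=-\mu_--\gamma$, $\kappa=-(\mu_++\gamma)/(\mu_-+\gamma)$, together with the uniqueness of the positive principal eigenfunction, and your sign comparison $\gamma(\lambda m_E)\ge 0\iff \lambda\le\lambda(m_E)$ is the clean way to transfer minimality. A couple of points deserve a sentence more of care. First, in the second direction you assert $\gamma^*\in(-\mu_+,-\mu_-)$ to guarantee $\lambda^*>0$ and $\kappa>0$; this is true but not entirely automatic from ``admissibility of the identification'' and should be argued, e.g.\ from the variational formula \eqref{eq:variational2} (taking $\psi\equiv 1$ gives $\gamma^*\le -\frac{1}{|\Om|}\int_\Om\mu_{E^*}+\beta\frac{|\partial\Om|}{|\Om|}$, while testing with an eigenfunction and integrating yields the lower bound). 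Second, your computation of $m_0$ gives $m_0=(\mu_0-\gamma^*)/\lambda^*$, which is correct; the factor $|\Omega|$ appearing in the paper's statement is a typo, and your formula is the right one. Finally, you should mention that the condition $|\Om_m^+|>0$ (resp.\ $|\Om_\mu^+|>0$) is preserved under the dictionary, which is immediate since the same set $E^*$ parametrizes both weights.
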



\subsection{About the class of admissible weights}\label{sec:basics}

In this section, we gather several comments related to the choice of constraints on the weight $m$: the pointwise one and the global (mass constraint) one.

As a first remark, there exists a wide literature concerning problems similar to \eqref{mini}, where one aims at minimizing the first eigenvalue of the operator $-\frac{1}{m}\Delta$ where $\Delta$ denotes the Dirichlet-Laplacian operator, with respect to functions $m$ satisfying the pointwise constraint $a\leq m(\cdot)\leq b$ a.e. in $\Om$ with $0<a<b$ as well as a global integral constraint. Such problems are motivated by optimal design issues with respect to structural eigenvalues. We refer for instance to \cite{Cox-MacL,kohn-strang1,kohn-strang2,kohn-strang3,krein,MR1613638} where Dirichlet boundary conditions are considered, and to \cite[Chapter 9]{henrot-eigbook} for a survey on these problems. 

We also mention the case of non-homogeneous membranes, similar to \eqref{mini} with Neumann boundary conditions, but with the positive weight (also called density) $0<a\leq m(\cdot)\leq b$ and without a mass constraint on the weight, see \cite[Theorem 1.4.2]{MR2641628} and \cite{MR0313648,MR572958}. See also \cite{colbois:hal-01330456} for a similar problem in the context of Riemannian manifolds with a mass preservation constraint.
Notice however that in the case where the weight $m$ is positive and where Neumann boundary conditions are imposed on the eigenfunction $\varphi$, there is no positive principal eigenvalue: indeed, assuming there exists a positive eigenfunction $\varphi$ associated to $\lambda$, we obtain by integration by parts that
$$
-\int_\Omega \frac{|\nabla \varphi|^2}{\varphi^2}=\lambda \int_\Omega m,
$$
which is a contradiction since this yields $\int_\Omega m<0$ if $\lambda>0$.

All the results presented in this section and in Section \ref{ssect:optimality} (i.e. the monotonicity of eigenvalues, the {\it bang-bang} property of minimizers) were established in \cite{LoYa} and \cite{JhaPorru} in one dimension for Neumann conditions (i.e. $\beta=0$) for Problem \eqref{mini}, and in \cite{Roques-Hamel} for periodic boundary conditions for Problem \eqref{mini2}. We claim that they can be straightforwardly extended to Robin conditions. Therefore we do not reproduce here the proof but rather refer to \cite[Theorem 1.1]{LoYa}  or \cite[Appendix A]{Roques-Hamel} for details. We also mention \cite{derlet}, for an extension of these results to principal eigenvalues associated to the one dimensional $p$-Laplacian operator.

Finally, concerning the constraint $\int_\Om m\leq -m_0|\Om|$, or equivalently $|E|\leq \frac{1-m_0}{\kappa+1}|\Om|$ (see Section \ref{ssect:pb}), we claim that it is active and therefore, it is similar to deal with the same optimal design problem where the inequality constraint is replaced by the equality one
$$
\int_\Om m= -m_0|\Om|\qquad \textrm{or}\qquad |E|= \frac{1-m_0}{\kappa+1}|\Om|.
$$
Indeed, it is a consequence of the comparison principle (see \cite[Lemma 2.3]{LoYa}) 
$$
m_1>m_2 \ \textrm{(resp. $E_1\subset E_{2}$)}\  \Longrightarrow \ \lambda(m_1)<\lambda(m_2) \ \textrm{(resp. $\lambda(E_{1}) > \lambda(E_{2})$)}.
$$
This comparison principle is obtained in an elementary way, by comparing the Rayleigh quotient for $m_1$ and $m_2$ (resp. $E_{1}$ and $E_{2}$).

\subsection{First order optimality conditions and \textit{bang-bang} property of minimizers}\label{ssect:optimality}

The minimizing set $E^{*}$ is a level surface of the principal eigenfunction $\varphi$. Indeed, this is proved in \cite{CGIKO} for Dirichlet boundary conditions but the arguments can be straightforwardly extended to Robin boundary conditions. Let us briefly recall the main steps. We denote by $\varphi$ the eigenfunction associated to the minimal principal eigenvalue $\lambda^*$. First, note that the optimal design problem $\max_{m\in \mathcal{M}_{m_0,\kappa}}\int_{\Om}m\varphi^{2}$, has a solution given by $m=\kappa 1_{E_{\alpha}}-1_{\Om \backslash E_{\alpha}}$, where $\{\varphi > \alpha\} \subset E_{\alpha}\subset \{\varphi \geq\alpha\}$. This is the so-called ``baththub principle'', see e.g. \cite[Theorem 1]{PTZ4}. Using a direct comparison argument and arguing by contradiction, one shows that $\lambda^*\geq \lambda \big(\kappa 1_{E_{\alpha}}-1_{\Om \backslash E_{\alpha}}\big)$ and therefore, $E_{\alpha}$ is a minimizing set for Problem \eqref{mini}.  
Next, it is standard, as $\varphi\in H^2(\Om)$ that $\Delta \varphi=0$~a.e. on $\{\varphi = \alpha\}$, which implies 
$\lambda^{*}\big(\kappa 1_{E_{\alpha}}-1_{\Om \backslash E_{\alpha}}\big)=0$ a.e. on $\{\varphi = \alpha\}$, which is impossible if $\{\varphi = \alpha\}$ is not negligible. 
Hence, one infers that $E^{*} =\{\varphi>\alpha\}$ up to a set of measure zero.

\subsection{Regularity theory}

Proving the regularity of the free boundary $\Gamma:=\partial E^{*}\backslash \partial \Om$ is a very difficult question in general. It follows from classical elliptic regularity that the principal eigenfunction $\varphi$ is $\mathcal{C}^{1,a}(\Om)$ for every $a\in [0,1)$. Hence, as $E^{*}=\{\varphi>\alpha\}$ up to a set of Lebesgue measure zero (see Section \ref{ssect:optimality}), the boundary $\Gamma$ is $C^{1,a}$-smooth at any point where $\nabla \varphi \neq 0$ and therefore, using a bootstrap argument, one infers the local analytic regularity of $\Gamma$ in this case, see \cite{CGK}. The regularity problem is thus reduced to the one of the degeneracy of the eigenfunction $\varphi$ on its level line $\Gamma$. 

When Dirichlet conditions are imposed on the boundary $\partial \Om$ (in other words, when ``$\beta =+\infty$''), then it has been proved in \cite{CKT}, when $N=2$, that $u\in \mathcal{C}^{1,1}(\Omega)$, that $\partial E$ does not hit the boundary and consists of finitely many disjoint, simple and closed real-analytic curves. 
We believe that the arguments involved in \cite{CKT} could be extended to our framework. Indeed, for Neumann boundary conditions, we expect $\partial E$ to hit the boundary, but most of the arguments of \cite{CKT} are local and do not see the Dirichlet boundary conditions. However, this is not the main topic of the present paper and we will thus leave this question open, since we do not need these results to obtain Theorems \ref{Robin_connected}, \ref{theo:optimball} and \ref{thm3_simple}.
In higher dimensions, it is only known that $\Gamma$ is smooth up to a closed set of Hausdorff dimension $N-1$ \cite{CK}. However, the situation is much more complicated since one could expect, as for some other free boundary problems, the emergence of stable singularities. 



\subsection{Dirichlet boundary conditions}
When Dirichlet boundary conditions are imposed on $\partial \Om$, it is possible to derive qualitative properties on $E^{*}$ from that of $\Om$. Symmetrization techniques apply (\cite{CGIKO}) and allow to show that, if $\Om$ is symmetric and convex with respect to some hyperplane, then so is $E^{*}$. Notice nevertheless that symmetry breaking phenomenon might arise if the convexity property with respect to the hyperplane is not satisfied, for example for annuli or dumbbells \cite{CGIKO}. 

For particular sets of parameters, it has been proved that $\Om\backslash E$ is connected if $\Om$ is simply connected and $E$ is convex if $\Om$ is convex \cite{CGIKO}.

We also mention \cite{HKK} where the authors investigate the related optimization problem of locating an obstacle of given shape, namely a ball, inside a domain $\Omega$ so that the lowest eigenvalue of the Dirichlet-Laplacian operator is minimized. Numerous symmetry results have been derived from the moving plane method. 

It is interesting to note that the Dirichlet case can be recovered by letting the parameter $\beta$ tend to $+\infty$.
\begin{proposition}\label{prop:asympDiri}
Let $\kappa>0$, $m\in L^\infty(\Om,[-1,\kappa])$ such that $\int_\Om m<0$. Let us denote temporarily by $\lambda (\beta,m)$ the principal eigenvalue for Robin boundary conditions defined by \eqref{eq:variational} and by $\lambda_D(m)$ the principal Dirichlet eigenvalue defined by \eqref{eq:variationalD}. The mapping $\R_+\ni \beta\mapsto \lambda (\beta,m)$ is concave, monotone increasing and converges to $\lambda_D(m)$ as $\beta\to +\infty$.
\end{proposition}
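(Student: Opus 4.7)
The plan is to read off monotonicity and concavity directly from the variational formula (\ref{eq:variational}), and then establish convergence to the Dirichlet eigenvalue by a standard $\Gamma$-convergence-type argument using the boundedness of a suitably normalized sequence of eigenfunctions.

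First I would observe that for any fixed $\varphi\in\mathcal{S}(m)$, the Rayleigh quotient
\[
\Re_m^\beta[\varphi]=\frac{\int_\Omega|\nabla\varphi|^2+\beta\int_{\partial\Omega}\varphi^2}{\int_\Omega m\varphi^2}
\]
is an \emph{affine} function of $\beta\in\mathbb{R}_+$, since the denominator does not depend on $\beta$ and the numerator is linear in $\beta$. Moreover, because $\int_{\partial\Omega}\varphi^2\ge 0$ and $\int_\Omega m\varphi^2>0$, this affine function is non-decreasing in $\beta$. The map $\beta\mapsto\lambda(\beta,m)$ is then the infimum of a family of non-decreasing affine functions of $\beta$, hence non-decreasing and concave.

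For the convergence part, the upper bound is immediate: any $\varphi\in H^1_0(\Omega)$ with $\int_\Omega m\varphi^2>0$ is admissible in (\ref{eq:variational}) with $\int_{\partial\Omega}\varphi^2=0$, so $\lambda(\beta,m)\le \int_\Omega|\nabla\varphi|^2/\int_\Omega m\varphi^2$ for every $\beta$, and taking the infimum over such $\varphi$ gives $\lambda(\beta,m)\le \lambda_D(m)$. Combined with monotonicity, the limit $\lambda_\infty:=\lim_{\beta\to\infty}\lambda(\beta,m)$ exists and satisfies $\lambda_\infty\le\lambda_D(m)$. To prove the matching lower bound, I would pick for each $\beta\ge 1$ a principal eigenfunction $\varphi_\beta$ normalized by $\int_\Omega m\varphi_\beta^2=1$, so that
\[
\int_\Omega|\nabla\varphi_\beta|^2+\beta\int_{\partial\Omega}\varphi_\beta^2=\lambda(\beta,m)\le\lambda_D(m).
\]
Using the $H^1$-Poincaré-type inequality
$\int_\Omega\varphi^2\le C\bigl(\int_\Omega|\nabla\varphi|^2+\int_{\partial\Omega}\varphi^2\bigr)$
one gets a uniform $H^1$-bound on $\{\varphi_\beta\}_{\beta\ge 1}$. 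Rellich compactness yields a subsequence converging strongly in $L^2(\Omega)$ and weakly in $H^1(\Omega)$ to some $\varphi_\infty$; by continuity of the trace in $L^2(\partial\Omega)$ along this subsequence and the bound $\int_{\partial\Omega}\varphi_\beta^2\le\lambda_D(m)/\beta\to 0$, one obtains $\varphi_\infty\in H^1_0(\Omega)$. Strong $L^2$ convergence preserves the normalization $\int_\Omega m\varphi_\infty^2=1$, so $\varphi_\infty\not\equiv 0$ and is admissible in (\ref{eq:variationalD}). Weak lower semicontinuity of the Dirichlet energy gives
\[
\lambda_D(m)\le\int_\Omega|\nabla\varphi_\infty|^2\le\liminf_{\beta\to\infty}\int_\Omega|\nabla\varphi_\beta|^2\le\liminf_{\beta\to\infty}\lambda(\beta,m)=\lambda_\infty,
\]
which combined with $\lambda_\infty\le\lambda_D(m)$ yields $\lambda_\infty=\lambda_D(m)$.

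The main (mild) obstacle is the lower bound step: one must justify the uniform $H^1$ bound on $\{\varphi_\beta\}$ and the identification of the limit as an element of $H^1_0(\Omega)$. The key ingredient is the equivalence, uniform in $\beta\ge 1$, of the norm $(\|\nabla\varphi\|_{L^2(\Omega)}^2+\beta\|\varphi\|_{L^2(\partial\Omega)}^2)^{1/2}$ with a norm controlling $\|\varphi\|_{L^2(\Omega)}$; this is where the assumption that $\Omega$ has a Lipschitz boundary (and the trace theorem) enters. Everything else is a routine application of the direct method and weak/strong compactness in $H^1(\Omega)$.
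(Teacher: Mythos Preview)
Your argument for concavity, non-decreasing monotonicity, the upper bound $\lambda(\beta,m)\le\lambda_D(m)$, and the convergence is essentially the same as the paper's, with only cosmetic differences: you normalize by $\int_\Omega m\varphi_\beta^2=1$ rather than $\int_\Omega\varphi_\beta^2=1$, and you use a Poincar\'e--Friedrichs inequality for the uniform $H^1$ bound whereas the paper bounds $\int_\Omega m\varphi_\beta^2\le\kappa$ directly from the pointwise constraint on $m$. Both routes are fine.

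However, there is a genuine gap: the statement asserts that $\beta\mapsto\lambda(\beta,m)$ is \emph{monotone increasing}, and the paper means this strictly. Your infimum-of-affine-functions argument only yields non-decreasing. The paper devotes a separate paragraph to strictness: assuming $\lambda(\beta_1,m)=\lambda(\beta_2,m)$ for some $\beta_1<\beta_2$, concavity plus monotonicity force $\lambda(\cdot,m)$ to be constant on $[\beta_1,\infty)$; plugging the eigenfunction $\varphi_2$ at level $\beta_2$ into the Rayleigh quotient at level $\beta_1$ then shows $\varphi_2\equiv 0$ on $\partial\Omega$, so $\lambda(\beta_2,m)=\lambda_D(m)$ and, by simplicity, $\varphi_2$ is the Dirichlet eigenfunction. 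The Robin condition then forces $\partial_n\varphi_2=0$ on $\partial\Omega$ as well, contradicting the positivity of the Neumann principal eigenfunction up to the boundary. You would need to supply an argument of this type (or an alternative, e.g.\ showing that for a principal eigenfunction one always has $\int_{\partial\Omega}\varphi^2>0$, so the affine function realizing the infimum has strictly positive slope) to close the gap.
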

\begin{proof}
As an infimum of real affine functions, $\lambda(\cdot,m)$ is concave. Moreover, taking $0<\beta_1<\beta_2$ and comparing the principal eigenvalues thanks to the Rayleigh definition \eqref{eq:variational}  shows that $\lambda(\cdot,m)$ is monotone non-decreasing. 
Let us prove that $\lambda(\cdot,m)$ is moreover increasing. For that purpose, we argue by contradiction and consider $(\lambda,\varphi_1)$ and $(\lambda,\varphi_2)$ two eigenpairs solving \eqref{eq:variational} with respectively $\beta=\beta_1$ and $\beta_2$ and such that $\lambda(\beta_1,m)=\lambda(\beta_2,m)$. Since the mapping $\beta \mapsto \lambda(\beta,m)$ is concave and non-decreasing, we infer that $\beta \mapsto \lambda(\beta,m)$ is constant on $[\beta_1,+\infty)$. Notice that
$$
\lambda(\beta_2,m)=\frac{\int_\Om |\nabla \varphi_2|^2+\beta_2\int_{\partial\Om}\varphi_2^2}{\int_\Om m\varphi_2^2}>\frac{\int_\Om |\nabla \varphi_2|^2+\beta_1\int_{\partial\Om}\varphi_2^2}{\int_\Om m\varphi_2^2}\geq \lambda(\beta_1,m)
$$
whenever $\varphi_2$ does not vanish identically on $\partial\Om$. Hence, it follows that necessarily $\varphi_2=0$ on $\partial\Om$ and $\lambda(\beta_i,m)=\lambda_D(m)$, $i=1,2$. Hence, by simplicity of the principal eigenvalue, one has also $\varphi_1=\varphi_2=0$ on $\partial\Om$. In particular, according to the Robin boundary condition on $\partial\Om$, one also has $\partial _n\varphi_1=\partial_n\varphi_2=0$. One gets a contradiction by observing that the Neumann principal eigenfunction is positive in $\overline{\Omega}$ (see e.g. \cite{MR588690}).

Choosing test functions in $\mathcal{S}(m)\cap H^1_0(\Om)$ in \eqref{eq:variational} proves that $\lambda(\cdot,m)\leq \lambda_D(m)$. As a monotone non-decreasing bounded function, $\lambda (\beta,m)$ has a finite limit as $\beta\to +\infty$. Hence, the family of eigenpairs $\{(\lambda(\beta,m),\varphi_\beta)\}_{\beta>0}$, where $\varphi_\beta$ denotes a solution of  \eqref{eq:variational}, maximizes  $\lambda (\cdot,m)$ as $\beta\to +\infty$. Assuming moreover that $\int_\Om  \varphi_\beta^2=1$ (by homogeneity of the Rayleigh quotient), one has $\int_\Om |\nabla\varphi_\beta|^2+\beta \int_{\partial\Om}\varphi_\beta^2=\lambda(\beta,m)\int_\Om m\varphi_\beta^2$ and therefore $ \int_\Om m\varphi_\beta^2\leq \int_{\Om\cap \{m>0\}} m\varphi_\beta^2\leq \kappa$ showing that the $H^1$-norm of $\varphi_\beta$ is uniformly bounded with respect to $\beta$. Thus, there exists $\varphi_{\infty}\in H^1(\Om)$ with $\int_\Om m{\varphi_{\infty}}^2 =1$ such that, up to a subsequence, $(\varphi_\beta)_{\beta>0}$ converges to $\varphi_{\infty}$ weakly in $H^1$ and strongly in $L^2$, by using the Rellich-Kondrachov Theorem. Writing then
$$
\frac{1}{\beta}\int_\Om |\nabla\varphi_\beta|^2+ \int_{\partial\Om}\varphi_\beta^2=\frac{\lambda(\beta,m)}{\beta}\int_\Om m\varphi_\beta^2,
$$
and letting $\beta$ tend to $+\infty$ shows that  $\int_{\partial\Om}{\varphi_{\infty}}^2=0$, or in other words that $\varphi_{\infty}\in H^1_0(\Om)$. Moreover, by weak convergence of $(\varphi_\beta)_{\beta>0}$ in $H^1(\Omega)$, there holds 
$$
\lambda_D(m)\geq \liminf_{\beta\to +\infty}\lambda(\beta,m)\geq \frac{\int_\Om |\nabla \varphi_\infty|^2}{\int_\Om m \varphi_\infty^2}\geq \lambda_D(m).
$$
Since $\int_\Om m \varphi_\infty^2\geq \int_\Om |\nabla \varphi_\infty|^2$, one has necessarily $\int_\Om m \varphi_\infty^2\neq 0$ and therefore the quotient above is well defined. The expected conclusion follows.

\end{proof}
According to Proposition \ref{prop:asympDiri}, we will consider that the Dirichlet case \eqref{eq:variationalD} corresponds to the choice of parameter $\beta=+\infty$.

\subsection{Periodic boundary conditions}\label{sec:period}

When $\Om = \Pi_{i=1}^{N}(-L_{i},L_{i})$ is embedded with periodic boundary conditions, 
then the optimal set $E^{*}$ is Steiner symmetric, that is, convex and symmetric with respect to all the hyperplanes $\{x_{i}=0\}$ \cite{BHR}. It follows that the restriction of the set $E^{*}$ to $\widetilde{\Om}=\Pi_{i=1}^{N}(0,L_{i})$ is a minimizer for problem \eqref{mini2} for the set $\widetilde{\Om}$ embedded with Neumann boundary conditions \cite{Roques-Hamel}. Hence, there is a bijection between the 
minimization problem for the periodic principal eigenvalue in the square $\Om = \Pi_{i=1}^{N}(-L_{i},L_{i})$ and for the Neumann principal eigenvalue in the restricted square 
$\widetilde{\Om}=\Pi_{i=1}^{N}(0,L_{i})$, and thus, one derives easily corollaries of our results to the periodic framework. 

Moreover, it has been proved that the strip is a local minimizer for certain parameters sets \cite{KaoLouYanagida}, and that the ball is not always a global minimizer \cite{Roques-Hamel}.

We will apply our results to the framework where $\Om$ is a rectangle in dimension $2$ in Section \ref{sec:rectangle}, and prove in particular that $\partial E^{*}\cap \Om$ cannot have a part of its boundary with constant curvature when $\beta=0$. 

\subsection{Numerics}\label{sec:numerics}
As the minimizing set $E^{*}$ is a level set of the principal eigenfunction $\varphi$, thresholding methods based on the so-called {\it bathtub principle} provide very fast algorithms in order to compute $E^{*}$. Indeed, starting with an arbitrary set $E_{k}$ of measure $c|\Om|$ and computing the eigenfunction $\varphi_{k}$ associated with the principal eigenvalue, one then defines recursively $E_{k+1}:= \{ \varphi_{k}>\alpha\}$ where $\alpha$ is a positive number chosen in such a way that $|E_{k+1}|=c|\Om|$ (\cite{CGIKO,MR2886017}), and so on. Note that $\alpha$ is unique since one shows in particular that the level sets $\{\varphi_k=C\}$ have zero Lebesgue measure for every $C>0$. This algorithm converges to a critical point $E$ in a small number of iterations for reasonable parameters.

This method has been used to compute optimal sets for Dirichlet boundary conditions \cite{CGIKO}, Neumann boundary conditions in squares and ellipses \cite{KaoLouYanagida}, Robin boundary conditions in squares \cite{MR2886017}. In general these solutions look like stripes, balls, or complementary of balls, depending on the parameters. However, as already underlined above, very few analytical results confirmed these simulations. In particular, it was not clear whether balls could be minimizing sets or not and we provide a negative answer to this problem in the present paper. We also refer to Section \ref{sec:applications} where we provide numerical investigations and illustrations of our results in the particular cases where $\Omega$ is either the two-dimensional unit square or the unit disc.



\section{The one-dimensional case (Proof of Theorem \ref{Robin_connected})}\label{1d_section}

In \cite{LoYa},  the authors solve the one-dimensional version of Problem \eqref{mini} in the particular case where $\beta=0$. The proof methodology was to first exhibit the solutions when the sets are intervals, and then to show that the optimizers must be intervals. Subsequently, in \cite{JhaPorru}, a much simpler proof of the same result was obtained using increasing or decreasing rearrangements. Here we generalize this result to the case of Robin boundary conditions, in other words for every $\beta\geq 0$.
Throughout this section we will assume without loss of generality that $\Omega=(0,1)$.



The optimization of $\lambda (m_E)$ in the class of intervals has been solved, we cite the following result from
\cite{CantrellCosnerRobin, MR2886017}.  
\begin{proposition}\label{beta_interval}
Take $c\in (0,1)$, $\kappa>0$ and $\beta\in\R_{+}$. Let $\la_{\be}(a):=\lambda (m)$ with $m=\kappa \mathbbm{1}_{[a,a+c]} - \mathbbm{1}_{(0,a)} -\mathbbm{1}_{(a+c,1)}$, in order to highlight the dependence of the eigenvalue on $a\in[0,1-c]$. The function $a\mapsto\la_{\beta}(a)$ is symmetric with respect to $a=(1- c )/2$, and moreover, with $\beta^*$ defined in \eqref{eq:beta*}, we have:
\begin{itemize}
\item if $\be> \be^*$, then $a\mapsto \la_{\be}(a)$ is strictly decreasing on $[0, (1- c )/2]$; in particular its minimum is reached for $a=(1- c )/2$.
\item if $\be< \be^*$, then $a\mapsto\la_{\be}(a)$ is strictly increasing on $[0,(1- c )/2]$; in particular its minimum is reached for $a=0$ and $a=1- c $.
\item if $\be=\be^*$, then $a\mapsto\la_{\be}(a)$ is constant and any $0\leq a\leq 1- c $ is a global minimum for $a\mapsto\la_{\be}(a)$.
\end{itemize}
\end{proposition}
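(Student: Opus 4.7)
My plan is to combine the reflection symmetry of the problem with an explicit shape-derivative computation, followed by solving the ODE explicitly at the boundary value $a=0$. The map $x\mapsto 1-x$ sends the weight $m_a = \kappa\mathbbm{1}_{[a,a+c]} - \mathbbm{1}_{(0,1)\setminus[a,a+c]}$ to $m_{1-c-a}$, so $\lambda_\beta(a)=\lambda_\beta(1-c-a)$ and it suffices to study the sign of $\lambda'_\beta$ on $[0,(1-c)/2]$, with $\lambda'_\beta((1-c)/2)=0$ automatic. Denoting by $\varphi_a$ the positive principal eigenfunction, I differentiate the Rayleigh identity $\lambda_\beta(a)\int_0^1 m_a\varphi_a^2 = \int_0^1(\varphi_a')^2 + \beta(\varphi_a(0)^2+\varphi_a(1)^2)$ in $a$, using $\partial_a m_a = (\kappa+1)(\delta_{a+c}-\delta_a)$ and the eigenvalue equation; the terms in $\partial_a\varphi_a$ cancel by the Robin boundary conditions (self-adjointness), leaving
\[
\lambda'_\beta(a) = \frac{(\kappa+1)\lambda_\beta(a)}{\int_0^1 m_a\varphi_a^2}\bigl(\varphi_a(a)^2 - \varphi_a(a+c)^2\bigr).
\]
Since the denominator is positive, $\mathrm{sign}\,\lambda'_\beta(a) = \mathrm{sign}\bigl(\varphi_a(a) - \varphi_a(a+c)\bigr)$.

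Next I write $\varphi_a$ explicitly on the three subintervals. On $[0,a]$, where $\varphi_a''=\lambda\varphi_a$ with $\varphi_a'(0)=\beta\varphi_a(0)$, one has $\varphi_a(x) = \varphi_a(0)\bigl(\cosh(\sqrt{\lambda}\,x) + (\beta/\sqrt{\lambda})\sinh(\sqrt{\lambda}\,x)\bigr)$, with a symmetric expression on $[a+c,1]$; on $[a,a+c]$, $\varphi_a$ is a linear combination of $\cos(\sqrt{\lambda\kappa}(x-a))$ and $\sin(\sqrt{\lambda\kappa}(x-a))$. Imposing continuity of $\varphi_a$ and $\varphi_a'$ at both $a$ and $a+c$ yields a $4\times 4$ homogeneous linear system whose determinant vanishing is the characteristic equation $F(\lambda,a,\beta,\kappa,c)=0$ determining $\lambda_\beta(a)$, and also produces a closed-form expression for $\varphi_a(a)^2-\varphi_a(a+c)^2$ in terms of $a$, $\beta$, $\kappa$, $c$ and $\lambda$. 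Specializing to $a=0$ (favorable set flush with the left endpoint), the characteristic equation collapses to a single trigonometric identity and the condition $\varphi_0(0)=\varphi_0(c)$ becomes $\cos(\sqrt{\lambda\kappa}\,c) + (\beta/\sqrt{\lambda\kappa})\sin(\sqrt{\lambda\kappa}\,c)=1$. Together these two equations uniquely determine a critical value of $\beta$; solving the resulting linear-in-$\beta$ trigonometric system and discussing whether the needed branch of $\arctan$ lies in $(0,\pi/2)$, $=\pi/2$, or $(\pi/2,\pi)$, according to $\kappa>1$, $\kappa=1$ or $\kappa<1$, reproduces exactly formula \eqref{eq:beta*}.

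The remaining task is to upgrade the knowledge of the sign at $a=0$ into monotonicity on the whole interval $[0,(1-c)/2]$, i.e.\ to rule out interior critical points of $a\mapsto\lambda_\beta(a)$ other than the symmetric one. I would tackle this directly from the closed-form expression for $\varphi_a(a)^2-\varphi_a(a+c)^2$ obtained in the previous step: after substituting the characteristic equation to eliminate transcendental factors, this quantity should factor as a product of an $a$-independent factor with sign equal to that of $\beta-\beta^*$, times an elementary function of $a$ that is strictly signed on $(0,(1-c)/2)$. \textbf{This algebraic factorization is the main obstacle} — conceptually simple but potentially lengthy. As a backup, one could instead invoke real-analyticity of $a\mapsto\lambda_\beta(a)$ together with the reflection symmetry about $(1-c)/2$ to reduce the question to a local computation of $\lambda''_\beta((1-c)/2)$, whose sign (identifying the symmetric point as a minimum or maximum) can itself be extracted from the characteristic equation by a second implicit differentiation.
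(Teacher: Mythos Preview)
The paper does not prove this proposition itself: immediately before the statement it says ``we cite the following result from \cite{CantrellCosnerRobin, MR2886017}'', and the paper's own work in Section~\ref{1d_section} is the complementary Proposition~\ref{prop:Robin_interval} (that any optimizer is an interval). So there is no in-paper proof to compare against; your outline is to be judged on its own.

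Your setup is correct and is the natural route: the reflection $x\mapsto 1-x$ gives the symmetry $\lambda_\beta(a)=\lambda_\beta(1-c-a)$; the simplicity of the principal eigenvalue makes $a\mapsto\lambda_\beta(a)$ differentiable and the Hadamard-type computation yielding
\[
\lambda'_\beta(a)=\frac{(\kappa+1)\lambda_\beta(a)}{\int_0^1 m_a\varphi_a^2}\bigl(\varphi_a(a)^2-\varphi_a(a+c)^2\bigr)
\]
is right; and the piecewise hyperbolic/trigonometric form of $\varphi_a$ with matching at $a$ and $a+c$ is exactly how one produces the characteristic equation. Identifying $\beta^*$ via the condition $\varphi_0(0)=\varphi_0(c)$ at the endpoint $a=0$ is also on target.

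The genuine gap is the one you flag yourself, and your proposed backup does not close it. Plan~(a) --- factoring $\varphi_a(a)^2-\varphi_a(a+c)^2$ after eliminating via the characteristic equation into an $a$-independent sign times a nowhere-vanishing factor on $(0,(1-c)/2)$ --- is indeed what has to be done, but you do not carry it out, and this is precisely where the substance of the cited references lies. Plan~(b) is logically insufficient: real-analyticity plus symmetry plus the sign of $\lambda''_\beta((1-c)/2)$ only tells you whether the symmetric point is a \emph{local} minimum or maximum. A real-analytic function symmetric about a point can still oscillate on the half-interval, so nothing prevents additional interior critical points of $a\mapsto\lambda_\beta(a)$ on $(0,(1-c)/2)$. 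To rule them out you must control $\varphi_a(a)-\varphi_a(a+c)$ for \emph{all} $a$, which sends you back to the computation in~(a) (or to a separate monotonicity/bifurcation argument in $\beta$ that you have not described).
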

Therefore to complete this result and prove Theorem \ref{Robin_connected}, we need to show that the solution of the optimal design problem \eqref{mini}  has the expression
$$
m=\kappa \mathbbm{1}_{[a,a+c]} - \mathbbm{1}_{(0,a)} -\mathbbm{1}_{(a+c,1)}
$$
for given parameters $a$ and $c$. The next result is devoted to proving this claim.
\begin{proposition}\label{prop:Robin_interval}
If $\Om=(0,1)$, then any optimal set $E^*$ for Problem \eqref{minishape} is an interval.
\end{proposition}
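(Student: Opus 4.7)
My plan is to prove that the superlevel set $E^*=\{\varphi>\alpha\}$ characterizing any optimizer (see Section~\ref{ssect:optimality}) must be a single interval, via a rearrangement argument adapted to Robin boundary conditions. The classical symmetric decreasing (Schwarz) rearrangement is not suitable here: although it decreases $\int_0^1|\varphi'|^2$ and, via the bathtub principle, yields an interval-type weight, it may strictly increase the boundary term $\beta(\varphi(0)^2+\varphi(1)^2)$ in the Rayleigh quotient \eqref{def:Re}. I would therefore introduce a tailored unimodal rearrangement that preserves the pair of boundary values of $\varphi$.

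Writing $\varphi$ for the positive eigenfunction associated with an optimal $(E^*,\lambda^*)$, $M=\max\varphi$, $\mu(t)=|\{\varphi>t\}|$, and assuming without loss of generality $\varphi_0:=\varphi(0)\leq\varphi(1)=:\varphi_1$, I would define $\tilde\varphi\in H^1(0,1)$ by specifying its superlevel sets: $\{\tilde\varphi>t\}=(0,1)$ for $t<\varphi_0$; $\{\tilde\varphi>t\}=(1-\mu(t),1)$ for $\varphi_0\leq t<\varphi_1$; and $\{\tilde\varphi>t\}=(a(t),b(t))\subset(1-\mu(\varphi_1),1)$ of length $\mu(t)$, nested and shrinking to a single center $c_0$ as $t\uparrow M$, for $\varphi_1\leq t\leq M$. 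Then $\tilde\varphi$ is unimodal, equimeasurable with $\varphi$, and by construction $\tilde\varphi(0)=\varphi_0$, $\tilde\varphi(1)=\varphi_1$. Setting $\tilde E=\{\tilde\varphi>\alpha\}$ (an interval, by unimodality) and $\tilde m=\kappa\mathbbm{1}_{\tilde E}-\mathbbm{1}_{(0,1)\setminus\tilde E}\in\m$, the argument reduces to three bounds: (i) $\int_0^1|\tilde\varphi'|^2\leq\int_0^1|\varphi'|^2$, via the coarea formula (the interior boundary $\partial\{\tilde\varphi>t\}\cap(0,1)$ has at most $2$ points for a.e.\ $t$, which is bounded above by $\#\{\varphi=t\}\cap(0,1)$) combined with a slicing argument to pass from the $L^1$ to the $L^2$ norm of the gradient; (ii) $\tilde\varphi(0)^2+\tilde\varphi(1)^2=\varphi(0)^2+\varphi(1)^2$, by construction; (iii) $\int_0^1\tilde m\,\tilde\varphi^2\geq\int_0^1 m\,\varphi^2$, by equimeasurability and the bathtub principle since $\tilde E$ is an upper-level set of $\tilde\varphi$ of measure $|E^*|$.

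Combining these yields $\lambda(\tilde m)\leq\Re_{\tilde m}[\tilde\varphi]\leq \Re_m[\varphi]=\lambda^*$, so $\tilde m$ is also optimal; a rigidity analysis of the equality case in (i) then forces $\#\{\varphi=t\}\cap(0,1)$ to be minimal for a.e.\ $t$, so that the superlevels of $\varphi$ themselves are intervals, and hence $E^*$ is an interval. The main obstacle is step (i) in the asymmetric case $\varphi_0<\varphi_1$: the rearrangement is not a standard symmetrization and exhibits an intermediate "plateau" regime of right-touching superlevels between the two boundary values, so verifying that $\tilde\varphi$ actually lies in $H^1$ and that the rearrangement inequality holds requires careful bookkeeping of the level-set topology, along with a judicious choice of the interior center $c_0$ that makes the transition at the level $t=\varphi_1$ compatible with both equimeasurability and monotonicity.
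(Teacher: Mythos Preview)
Your rearrangement strategy is in the same spirit as the paper's, but the specific construction you propose has a genuine gap. The issue is the requirement that $\tilde\varphi$ be simultaneously \emph{unimodal}, \emph{equimeasurable} with $\varphi$, and satisfy $\tilde\varphi(0)=\varphi_0$, $\tilde\varphi(1)=\varphi_1$. A unimodal function on $[0,1]$ attains its minimum at an endpoint, so these three requirements together force $\min_{[0,1]}\varphi=\min(\varphi_0,\varphi_1)=\varphi_0$. But nothing rules out a priori that the eigenfunction associated with a non-interval candidate $E^*$ dips strictly below $\varphi_0$ in the interior: on $(0,1)\setminus E^*=\{\varphi\le\alpha\}$ the equation gives $\varphi''=\lambda\varphi>0$, so $\varphi$ is convex there, and an interior local minimum with value below both $\varphi(0)$ and $\varphi(1)$ is perfectly compatible with the Robin conditions $\varphi'(0)=\beta\varphi(0)>0$, $\varphi'(1)=-\beta\varphi(1)<0$. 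In that scenario your layer-cake prescription $\{\tilde\varphi>t\}=(0,1)$ for $t<\varphi_0$ is inconsistent with $\mu(t)=|\{\varphi>t\}|<1$ for $t\in(\min\varphi,\varphi_0)$, so $\tilde\varphi$ is \emph{not} equimeasurable with $\varphi$. This breaks step (iii) (the Hardy--Littlewood/bathtub comparison needs equimeasurability) and also step (i), since the coarea argument you sketch compares $\tilde n(t)^2/(-\tilde\mu'(t))$ with $n(t)^2/(-\mu'(t))$ and collapses unless $\tilde\mu=\mu$.

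The fix, which is exactly what the paper does, is to \emph{drop} the boundary-preservation requirement and instead arrange for the boundary values to \emph{decrease}. Pick a point $x_\star\in(0,1)$ where $\varphi$ attains its maximum, and define $\varphi^R$ to be the monotone increasing rearrangement of $\varphi|_{(0,x_\star)}$ on $(0,x_\star)$ and the monotone decreasing rearrangement of $\varphi|_{(x_\star,1)}$ on $(x_\star,1)$; rearrange $m$ in the same way to obtain $m^R$. Then $\varphi^R$ is continuous at $x_\star$ (both one-sided limits equal $\max\varphi$), the standard one-dimensional P\'olya inequality applies on each subinterval, Hardy--Littlewood applies on each subinterval, and the boundary term drops because $\varphi^R(0)=\min_{[0,x_\star]}\varphi\le\varphi(0)$ and $\varphi^R(1)=\min_{[x_\star,1]}\varphi\le\varphi(1)$. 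Hence $\Re_{m^R}[\varphi^R]\le\Re_{m^*}[\varphi]=\lambda(m^*)$, and optimality forces equality; the equality case in P\'olya on each piece then makes $\varphi$ monotone on $(0,x_\star)$ and on $(x_\star,1)$, hence unimodal, so $E^*=\{\varphi>\alpha\}$ is an interval. This two-sided rearrangement around the maximum is both simpler than your asymmetric construction and robust to interior minima of $\varphi$.
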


\begin{proof}[Proof of Proposition \ref{prop:Robin_interval}.]
Assume in a first step that $\beta>0$.
As recalled in the sections \ref{sect:intro} and \ref{ssect:optimality}, we know there exists $E^*$ solution of \eqref{minishape}. We denote by $m^*=\kappa\mathbbm{1}_{E^*}-\mathbbm{1}_{\Om\backslash E^*}$ the associated weight and $\varphi$ the corresponding eigenfunction, solution of \eqref{eq:EV_main}. The Rayleigh quotient $\Re_{m^*}$ defined by \eqref{def:Re} rewrites in this case
$$
\Re_{m^*} [\varphi]  :=\frac{\int_0^1\varphi'^2+\beta \varphi^2(0)+\beta \varphi^2(1)}{\int_0^1 m^*\varphi^2},
$$
one has $\la(m^*)=\Re_{m^*}[\varphi]$. Since $-\varphi'' = \lambda m^* \varphi \in L^\infty(0,1)$ we have $\varphi'\in W^{1,\infty}(0,1)$ and $\varphi'\in C^0([0,1])$. We  also have $\varphi'(0) =-\dn \varphi(0)= \beta \varphi(0) > 0 $ and $\varphi'(1) =\dn \varphi(1)= -\beta \varphi(1) < 0 $, so $\varphi$ reaches his maximum inside $(0,1)$. 
Let 
$$
\alpha :=  \min \{\xi\in(0,1) \ |\ 
\varphi(\xi) =\| \varphi\|_{\infty} \}\in(0,1).
$$

%

Introduce the function 
$\varphi^R$ defined on $(0,1)$ by
$$
\varphi^R(x)=\left\{\begin{array}{ll}
\varphi^\nearrow(x) & \textrm{on }(0,\alpha),\\
\varphi^\searrow(x) & \textrm{on }(\alpha,1),
\end{array}
\right.
$$
where $\varphi^{\nearrow}$ denotes the monotone increasing rearrangement of $\varphi$ on $(0,\alpha)$ and 
$\varphi^\searrow$ denotes the monotone decreasing rearrangement of $\varphi$ on $(\alpha,1)$ (see for instance \cite{MR2455723}). Thanks to the choice of $\alpha$, it is clear that this symmetrization does not introduce discontinuities, and more precisely that $\varphi^R\in H^1(0,1)$.\\
Similarly, we also introduce the rearranged weight $m^R$, defined by 
$$
 m^R(x)=\left\{\begin{array}{ll}
m^\nearrow(x) & \textrm{on }(0,\alpha),\\
m^\searrow(x) & \textrm{on }(\alpha,1),
\end{array}
\right.
$$
with the same notations as previously. In other words, $m^\nearrow$ (resp. $m^\searrow$) is {\it bang-bang}, equal to $-1$ or $\kappa$ almost everywhere, such that $|\{m^\nearrow=\kappa\}\cap (0,\alpha)|=|\{m^*=\kappa\}\cap (0,\alpha)|$ and $|\{m^\nearrow=-1\}\cap (0,\alpha)|=|\{m^*=-1\}\cap (0,\alpha)|$ (resp. $|\{m^\searrow=\kappa\}\cap (\alpha,1)|=|\{m^*=-1\}\cap (\alpha,1)|$ and $|\{m^\searrow=-1\}\cap (\alpha,1)|=|\{m^*=-1\}\cap (\alpha,1)|$).

We aim at proving now that $m^R$ is admissible for the optimal design problem \eqref{mini}, that $\varphi^R$ is an admissible test function of the Rayleigh quotient $\Re_{m^R}$, and that the Rayleigh quotient decreases for the symmetrization, in other words that $\Re_{m^R}[\varphi^R] \leq\Re_{m^*}[\varphi]$. 

First it is clear that $m^R\in\mathcal{M}_{m_{0},\kappa}$. Indeed, every integral on $(0,1)$ can be written as the sum of the integral on $(0,\alpha)$ and $(\alpha,1)$, and we use the equimeasurability property of monotone symmetrizations on each of these intervals. Then, writing $\int_{0}^1 m\varphi^2=\int_{0}^1(m+1)\varphi^2-\int_{0}^1\varphi^2$ and using Hardy-Littlewood inequality (again on $(0,\alpha)$ and $(\alpha,1)$), we also have $\int_{0}^1m^R(\varphi^R)^2\geq \int_{0}^1 m\varphi^2>0$, and the expected conclusion follows. 

Also, we easily see that 
$$(\varphi^R)^2(0)=\min_{[0,\alpha]}\varphi^2\leq \varphi^2(0)\;\;\;\textrm{ and }\;\;\;(\varphi^R)^2(1)=\min_{[\alpha,1]}\varphi^2\leq \varphi^2(1).$$
Using now Poly\`a's inequality twice: $\int_0^\alpha ({\varphi^R}')^2\leq \int_0^\alpha {\varphi}'^2$ and $\int_\alpha^{1} ({\varphi^R}')^2\leq \int_\alpha^{1} {\varphi}'^2$, we obtain that
\begin{eqnarray*}
\Re_{m^R}[ \varphi^R]&\leq &\Re_{m^*}[\varphi]\\
&=& \lambda(m^*)=\inf_{m\in\mathcal{M}_{m_0,\kappa}}\inf\left\{\Re_{m}(\psi), \psi\in H^1(0,1)\textrm{ such that }\int_{0}^1m\psi^2>0\right\}\\
&\leq &\Re_{m^R}[ \varphi^R].
\end{eqnarray*}
Investigating the equality case of Poly\`a's inequality, it follows that $\varphi$ first increases up to its maximal value and then decreases on $(\alpha,1)$ (see for example \cite{BLR} and references therein). As a consequence, since $E^*$ is a upper level-set of $\varphi$, it is necessarily an interval, which concludes the proof. 

Finally, the case $\beta=0$ is simpler. A direct adaptation of this proof shows that the claim remains valid in that case.


\end{proof}
\section{Non-optimality of the ball for Problem 
\texorpdfstring{\eqref{mini}}{Lg} 
(Proofs of Theorem~\texorpdfstring{\ref{theo:optimball}}{Lg} 
and Proposition~\texorpdfstring{\ref{prop:geomprop}}{Lg})}\label{geom_prop}

In this section, we investigate the optimality of a ball, or more generally of rotationally symmetric sets (i.e. a union of concentric rings). This question naturally arises, in particular according to numerical results in \cite{MR2886017} for dimension $N=2$, where for some values of the parameters, the solutions seem to take the shape of a ball; also in the case of periodic boundary condition, H. Berestycki stated that the solution might be a ball, for some values of the volume constraint \cite{berest}. We prove that for every $\beta\in [0,+\infty]$ and  $c\in(0,1)$, the ball is not optimal, except possibly if $\Om$ itself is a ball having the same center as $E$.



Let us first assume that $m_{E}$ is a critical point of the optimal design problem \eqref{mini}.
We recall that ``$m_{E}$ is a critical point of the optimal design problem \eqref{mini}'' means that $m_E$ satisfies the necessary first order optimality conditions (see Section \ref{ssect:optimality}), in other words that the associated principal eigenfunction $\varphi$ is constant on $\partial E \backslash \Om$.

\medskip

\noindent {\bf Part 1: the function $\varphi$ is radially symmetric.} Assume that $E$ is rotationally symmetric, and is a critical point.  In what follows, we will denote by  
$\lambda$ the principal eigenvalue $\lambda (m_{E})$ and by $\varphi$ the associated eigenfunction that solves \eqref{eq:EV_main}. 
We follow the following steps: we first prove that $\varphi$ is radial in $E$, then is also radial in $\Om\backslash E$, and we conclude that $\Om$ must be a centered ball.
Generalizing the methods used in \cite{HenrotOudet,Henrot-Privat}, take $i,j\in \llbracket1,N\rrbracket$ with $i\neq j$, and define 
\begin{equation} \label{def:vradiale}
 v_{ij}:= x_i\partial_{x_j} \varphi - x_j \partial_{x_i} \varphi \quad \hbox{ in } \Om.
\end{equation}
This function lies in $W^{1,p}_{loc}(\Om)$ 
for all $p\in (1,+\infty)$ and a straightforward computation yields that $v_{ij}$ verifies in the sense of distributions the partial differential equations
\begin{align*}
-\Delta v_{ij} &= \lambda \kappa v_{ij} \quad  \hbox{ in } E  \\
 \hbox{ and } \quad \Delta v_{ij} &= \lambda v_{ij} \quad  \hbox{ in } \Om \backslash \overline{E}.
\end{align*}
Let us now prove that $v_{ij}$ vanishes in $\Om$. 

Since $m_E$ is a critical point of Problem \eqref{mini} and since $\varphi$ solves \eqref{eq:EV_main} in a variational sense, there exists a real number $\alpha>0$ such that $E=\{\varphi>\alpha\}$ up to a set of measure $0$, and therefore using continuity of $\varphi$, we obtain
$$
\varphi=\alpha \quad\textrm{on }\partial E\backslash \partial\Om.$$

As a consequence and according to \eqref{def:vradiale}, since the set $E$ is rotationally symmetric, the function $v_{ij}$ vanishes on $\partial E\backslash \partial\Om$. We now assume $\beta<\infty$ and leave the case $\beta=+\infty$ at the end of this part.
Moreover, since  $E$ is rotationally symmetric, a straightforward computation shows that, 
\begin{eqnarray}
|x|\partial_n v_{ij} &=& \sum_{k=1}^N x_k\partial_{x_k}v_{ij}  =(x_i\partial_{x_j}  - x_j \partial_{x_i})\left(\sum_{k=1}^N x_k\partial_{x_k}\right)\varphi \nonumber \\
&=& -\beta |x| (x_i\partial_{x_j}\varphi  - x_j \partial_{x_i}\varphi)= -\beta |x| v_{ij} \qquad \textrm{on } \partial E \cap \partial \Om.\label{eq:bordv}
\end{eqnarray}
for all $x\in \partial E \cap \partial \Om$. Therefore, the function $v_{ij}$ solves in a variational sense the partial differential equation
\begin{equation}\label{eq:varray}
\left\{ \begin{array}{rll} -\Delta v_{ij} &= \lambda \kappa v_{ij} \quad &\hbox{ in } E ,\\
\partial_n v_{ij} + \beta v_{ij} &=0 \quad &\hbox{ on } \partial E \cap \partial \Om,\\
           v_{ij} &=0 \quad &\hbox{ on } \partial E\backslash\partial \Om. 
          \end{array}\right.
\end{equation}
\begin{figure}[!ht]
\begin{center}
\includegraphics[height=6cm]{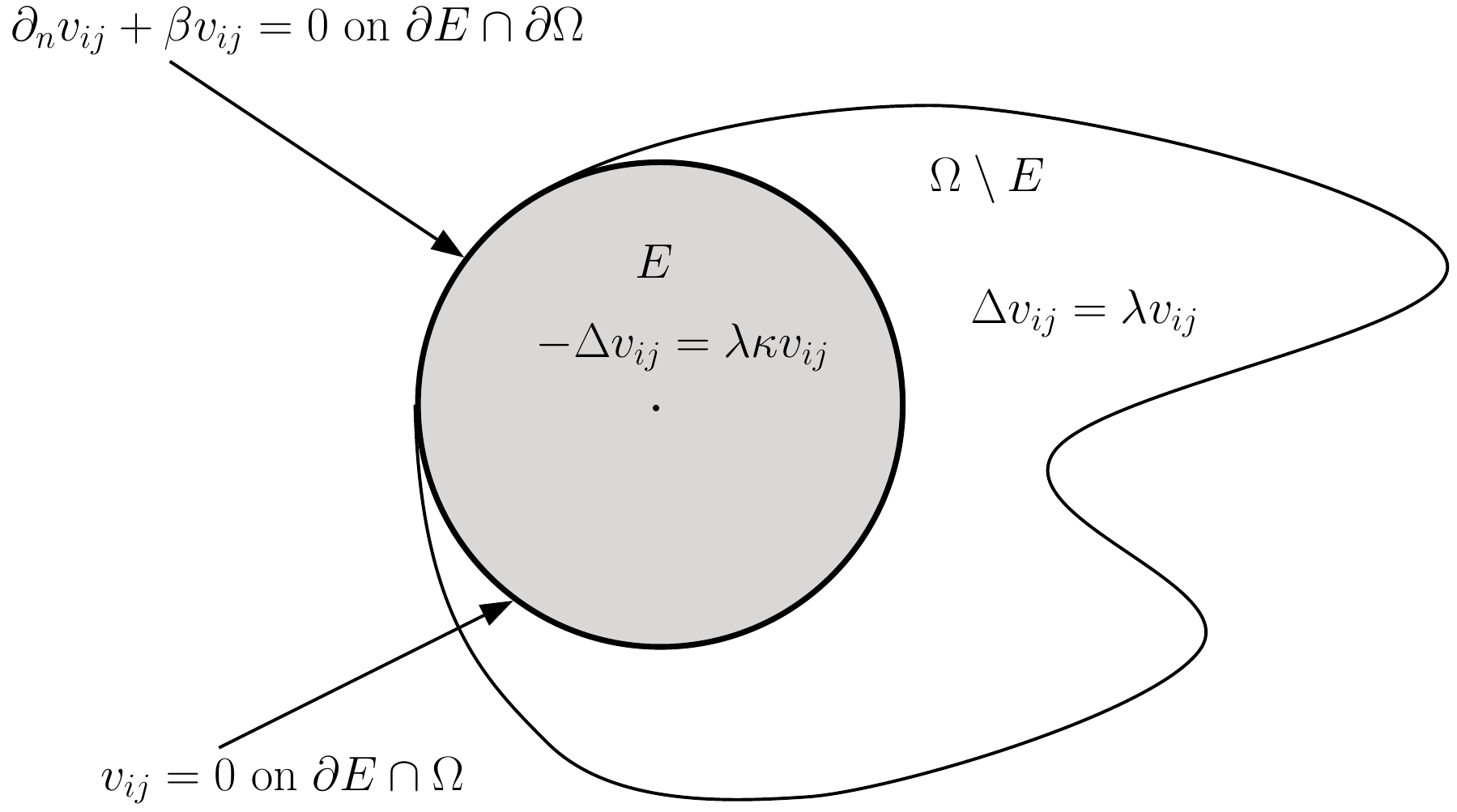}
\caption{The PDE solved by $v_{ij}$.}
\end{center}
\end{figure}

On the other hand, according to the minimax Courant-Fischer principle, there holds
\begin{equation} \label{eq:Rayleigh}
 \lambda = \min_{\substack{\psi \in H^1(\Om)\\ \int_{\Om}m_{E}\psi^2>0}} \Re _{m_E}[\varphi]
\end{equation}
where the Rayleigh quotient $\Re _{m_E}$ is defined by \eqref{def:Re} and this minimum is reached only by the multiples of $\varphi$. 
Assume by contradiction that $v_{ij}$ does not vanish identically in $\overline{E}$, then we can take as a test function
$$
\widetilde{v_{ij}}:= \left\{ 
\begin{array}{lcl} 
v_{ij} &\hbox{ in }& E ,\\
0 &\hbox{ in }& \Om\backslash E .
\end{array}\right.
$$
 in the Rayleigh quotient $\Re _{m_E}$. The function $\widetilde{v_{ij}}$ belongs to $H^1(\Om)$ since $v_{ij}=0$ on $\partial E\backslash\partial\Om$, satisfies 
 $$
 \int_{\Om}m_{E}\widetilde{v_{ij}}^2=\kappa \int_{E}{v_{ij}}^2>0
 $$ 
 and is not a multiple of $\varphi$ since $\widetilde{v_{ij}}= 0$ 
in $ \Om\backslash E$. As a consequence, one has
$$
\lambda <  \frac{\int_\Om |\nabla \widetilde{v_{ij}}|^2 +\beta \int_{\partial\Om} \widetilde{v_{ij}}^2}{ \int_\Om m_{E}\widetilde{v_{ij}}^2}
=\frac{  \int_{E} |\nabla v_{ij}|^2 +\beta \int_{\partial E\cap \partial\Omega}v_{ij}^2}{ \int_{E} m_{E}v_{ij}^2}=\lambda,
$$
the last equality following from an integration by parts in \eqref{eq:varray}. This contradiction yields that $v_{ij}\equiv 0$ in $E$. 

Hence $x_i\partial_{x_j} \varphi \equiv x_j \partial_{x_i} \varphi$ for all $i\neq j$, which implies  
that $\varphi$ is radially symmetric inside $E$.
In other words, there exists a function $U$ such that $\ph(x) = U(|x|)$, for all $x\in E$. 

Notice moreover that $\partial_n \varphi = \pm U'(|x|)$ 
on $\partial E$, where $n$ stands for the outward normal to $E$. Therefore, the function $\sum_{k=1}^N x_k\partial_{x_k}\varphi$ which, up to some multiplicative constants, is equal to $\partial_{n}\varphi$ on each connected component of $\partial E$, is constant on each connected component of $\partial E$.

Let us now prove that the function $\varphi$ is in fact radially symmetric on the whole domain $\Omega$. For that purpose, let us show that for every $i,j\in \llbracket1,N\rrbracket$ with $i\neq j$, the function $v_{ij}$ also vanishes in $\Om\backslash \overline{E}$. Similar computations as in \eqref{eq:bordv} lead to 
$\partial_n v_{ij} \equiv 0$ on $\partial E$. 
Hence, $v_{ij}$ satisfies the following overdetermined partial differential equation
\begin{equation}
 \label{eq:varray2}\left\{ \begin{array}{rll} 
 \Delta v_{ij}  &= \lambda v_{ij} \quad &\hbox{ in } \Om\backslash \overline{E},\\
           v_{ij} &=0 \quad &\hbox{ on } \partial (\Om\backslash \overline{E})\backslash \partial\Om  =\partial E\cap\Om,\\ 
           \partial_n v_{ij} &=0 \quad &\hbox{ on } \partial (\Om\backslash \overline{E})\backslash \partial\Om =\partial E\cap\Om.\\ 
          \end{array}\right.
\end{equation}
Moreover, $\partial E\cap \Omega$ is analytic. Indeed, as $E$ is rotationally symmetric and has a finite number of connected components, it is a finite union of rings. It thus satisfies an interior sphere condition and the Hopf Lemma thus gives $\nabla \varphi \neq 0$ on $\partial E \cap \Om$. The implicit function theorem thus yields that this boundary is $C^{1,1}$ since $E$ is a level set of $\varphi$, which is $W^{2,p}$ for all $p>1$, and the conclusion follows from a bootstrap argument (see \cite{CGK}).

The Cauchy-Kowalevski theorem yields that $v_{ij}=0$ in a neighborhood of $\partial E\cap\Om$ in $\Om\backslash \overline{E}$. Moreover, using the hypoellipticity of the Laplacian operator (see e.g. \cite{nelson}), we claim that $v_{ij}$ is analytic (in $\Om\backslash\overline{E}$), implying that $v_{ij}\equiv 0$ in $\Om\backslash \overline{E}$ and then in the whole domain $\Om$. 
Since it is true for all $i\neq j$, this means that $\varphi$ is radially symmetric over the full domain $\Om$ and we can define $U(r)=\varphi(x)$ where $r=|x|$ for some $x\in \Om$ and $r\in[0,b)$ where $b=\max\{|x|, x\in\Om\}$.

In the case $\beta=+\infty$, the set $\partial\Om\cap\partial E$ is empty as $\varphi$ vanishes on $\partial\Om$ and equals $\alpha$ on $\partial E$, so the proof is similar by just dropping any consideration involving the set $\partial\Om\cap\partial E$.


\medskip 

\noindent {\bf Part 2: $\Omega$ is necessarily a centered ball.} 
In the case $\beta=+\infty$, this part is easy as $\Omega$ is a level set of $\varphi$. We therefore focus on the case $\beta<\infty$.
Assume by contradiction that $\Om$ is not a centered ball.
Let $B_a$ and $B_b$ the largest and, respectively, the smallest open balls centered at $O$ such that $B_a \subset \Om\subset B_b$ (see Figure \ref{fig:dessin21721}), with $0<a<b$. Notice that the assumption that $\partial \Om$ be connected guarantees that $\Om$ contains the origin. The existence of $B_a$ and $B_b$ is then a consequence of the boundedness of $\Om$ combined with the main assumption of Theorem \ref{theo:optimball}, namely that $E$ is rotationally symmetric and centered at $O$.

\begin{figure}[!ht]
\begin{center}
\includegraphics[height=8cm]{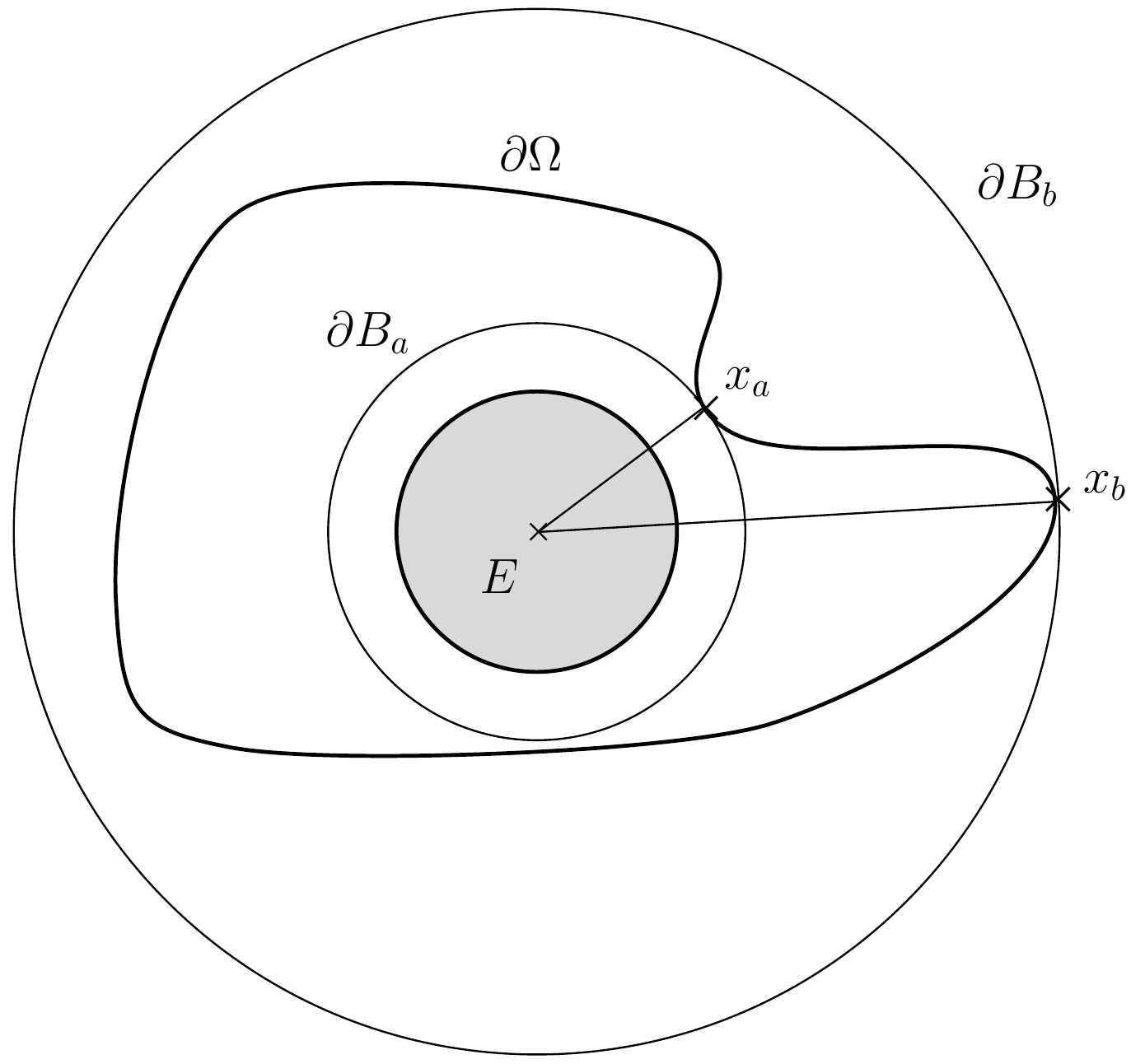}
\caption{The set $\Om$, the two balls $B_a$ and $B_b$.}\label{fig:dessin21721}
\end{center}
\end{figure}

 There exist $x_a, x_b \in \partial \Om$ such that $|x_a|=a$ and $|x_b|=b$. The definitions of $B_a$ and $B_b$ yield that the exterior normals of $\Om$ at $x_a$ and $x_b$ are respectively 
$x_a/|x_a|$ and  $x_b/|x_b|$. Hence, 
\begin{eqnarray*}
\partial_n \varphi (x_a)+\beta \varphi (x_a)& =& U'(a)+\beta U (a)=0,\\
\textrm{and }\quad \partial_n \varphi (x_b)+\beta \varphi (x_b)&=&U'(b)+\beta U (b)=0.
\end{eqnarray*}

In the sequel, by analogy with polar variables, we will denote $r=|x|$. 
For every $c\in (a,b)$, let us denote by $B_c$ the centered ball of radius $c$, and let us associate an element $x_c\in \partial B_c\cap\partial\Omega$ such that $|x_c|=c$. Introduce $\mathcal{N}(x)=x/|x|$ and note that  if $x\in\partial B_c$, $\mathcal{N}(x)$ is the outer normal vector to $B_c$. Moreover, since $\varphi$ is radially symmetric, there holds $\nabla \varphi (x)=U'(|x|)\mathcal{N}(x)$ for every $x\in B_b$.

In what follows, we will treat separately the cases ``$\beta=0$'' and ``$\beta >0$'' for the sake of clarity.

Let us first consider the case of Neumann boundary conditions, in other words the case ``$\beta=0$''. Note that  $\mathcal{N}(x_a)\cdot n(x_a) =1$ so by continuity there exists an interval $[a,a+\delta]$ such that $\mathcal{N}(x_c)\cdot n(x_c) >0$  for every $c\in [a,a+\delta]$. Writing   $0=\dn\varphi(x_c) = U'(c) \mathcal{N}(x_c)\cdot n(x_c)$  leads to $U'(c)=0$ for all $c\in [a,a+\delta]$ since $\mathcal{N}(x_c)\cdot n(x_c) >0$. Thus $U(r)$ is a positive constant  and $U''(r)=0$ on $(a,a+\delta)$. This leads to a contradiction with the equation 
\begin{equation}\label{eq:Ur}
U''(r) + \frac{N-1}{r}U'(r) = \lambda U(r)
\end{equation}
satisfied by $U$ according to \eqref{eq:EV_main}, and the fact that $U(r)>0$ for every $r\in (a,b)$. As a consequence, with Neumann boundary conditions,  we must have $a=b$ which shows that $\Omega$ is a disk.

Let us now investigate the general case of Robin boundary conditions, in other words the case ``$\beta>0$''. We first prove that $\mathcal{N}(x_c)\cdot n(x_c) > 0$, where $n(x_c)$ is the outer normal vector to $\Omega$ at $x_c$. Assume by contradiction that $\mathcal{N}(x_c)\cdot n(x_c)\leq 0$ for some $x_c$. Since $\Omega$ and $B_a$ are tangent at $x_a$ we have $\mathcal{N}(x_a)\cdot n(x_a) = 1$ and since $\Omega$ is of class $\mathcal{C}^1$, we have $n(\cdot)\in \mathcal{C}^0(\partial\Omega)$. Therefore, by continuity, there exist $d\in (a,b)$ and $x_d\in \partial B_d\cap\partial\Omega$ with $\mathcal{N}(x_d)\cdot n(x_d)= 0$. Writing the Robin boundary condition at $x_d$, one gets 
$$ 
-\beta \varphi(x_d)=\dn\varphi(x_d) = U'(r_d) \mathcal{N}(x_d)\cdot n(x_d) =0,
$$
which is impossible since $\varphi(x_d)>0$ and $\beta>0$. It follows that $\mathcal{N}(x)\cdot n(x) > 0$ for all $x\in\partial\Omega$.

Let us now introduce the function $V$ defined by 
$$
V:[a,b]\ni r\mapsto -\frac{U'(r)}{ U(r)}.
$$ 


Note that $V$ is well-defined since $U(r)$ is positive for every $r\in (a,b)$. Rewriting the Robin boundary condition in terms of the function $U$ yields
$$ -\beta U(r)= -\beta \varphi(x)=\dn\varphi(x) = U'(r) \mathcal{N}(x)\cdot n(x), $$
for every $r\in (a,b)$ and $x$ such that $|x|=r$. Therefore, there holds
\begin{align} 
\label{V1} V(r) & := -\frac{U'(r)}{ U(r)} =\frac{\beta}{\mathcal{N}(x)\cdot n(x)}\geq \beta.\\
\label{V2}  V(a) &= V(b) =\beta.
\end{align}

We will reach a contradiction by exhibiting $r^*\in (a,b)$ such that $V(r^*)<\beta$. To prove this, we will investigate the sign of the derivatives of $V$ at $r=a$ or $r=b$. According to \eqref{V1} and \eqref{V2},  the function $V$ is non decreasing at $r=a$ (resp. $V$ is non increasing at $r=b$), otherwise one could find $r^*$ in the neighborhoods of $r=a$ (resp. $r=b$) with $V(r^*)<\beta$. 

The derivative of $V$ writes
$$
V'(r) = -\frac{U''(r)}{U(r)} +\frac{U'(r)^2}{U(r)^2} = -\lambda +\frac{(N-1)U'(r)}{rU(r)} + \frac{U'(r)^2}{U(r)^2} = -\lambda - \frac{N-1}{r}V(r) + V(r)^2.
$$
by using that the function $U$ solves \eqref{eq:Ur}. Moreover, the boundary conditions \eqref{V2} yields
\begin{align*} 
V'(a) &= -\lambda - \frac{N-1}{a}V(a) +V(a)^2  = -\lambda -\frac{(N-1)\beta}{a} +\beta^2,\\
V'(b) &= -\lambda - \frac{N-1}{b}V(b)+ V(b)^2 = -\lambda - \frac{(N-1)\beta}{b} +\beta^2.
\end{align*}
Since the two zeros of the polynomial $P_a(X)=-\lambda - (N-1)X/a + X^2$ are
$$X_a^{\pm} = \frac{N-1}{2 a} \pm \sqrt{\frac{(N-1)^2}{(2 a)^2} + \lambda},$$
it follows that $V'(a) < 0$ whenever $\beta\in (0,X_a^+)$, which would be a contradiction in view of the above discussion.
 

In a similar way, the two zeros of the polynomial $P_b(X)=-\lambda - (N-1)X/b + X^2$ are
$$
 X_b^{\pm} = \frac{N-1}{2 b} \pm \sqrt{\frac{(N-1)^2}{(2 b)^2} + \lambda}.$$
Hence, it follows that $V'(b) > 0$ whenever $\beta \in (X_b^+,+\infty)$, which would also be a contradiction.


Moreover, since $b > a$, one has $X_b^+<X^+_a$, so that whatever the value of $\beta > 0$, we can arrive at one of the two contradictions above or even both. Eventually, we have reached a contradiction which implies $a=b$ and the domain $\Omega$ is necessarily a ball.

\medskip

\medskip


\noindent{\bf Part 3: if $E$ is rotationally symmetric, and also a minimizer of $\lambda$, then $E$ is necessarily a centered ball.}
Let us assume now that $\Omega$ is equal to $B_R$, $E$ is rotationally symmetric, and it is not only a critical point but also a minimizer. Within this part, we will denote by $m_\beta$ a minimizer for the shape optimization problem
\begin{equation}\label{pbmini752}
\min \{\lambda(\beta,m)\ |\ m\in \mathcal{M}_{m_0,\kappa}\textrm{ and }\ m\textrm{ is radially symmetric}\},
\end{equation}
by $\lambda(\beta,m_\beta)$ the optimal eigenvalue and by $\varphi_\beta$ the associated principal eigenfunction. We will also assume that $\int_\Om \varphi_\beta^2\, dx=1$ by homogeneity of the Rayleigh quotient.
According to the two previous steps, we know that the function $\varphi_\beta$ is radially symmetric since $\Om$ is a centered ball. 

It is notable that in the case where a solution $m$ of Problem \eqref{mini} is radially symmetric, it also solves Problem \eqref{pbmini752}. As a consequence, our claim is equivalent to showing that for $\beta$ large enough, the solution of Problem \eqref{pbmini752} writes
$$
m_\beta=(\kappa+1)\chi_E-1,
$$
where $E$ denotes a centered ball with radius $r_0$ such that $\int_\Om m_\beta=-m_0|\Om|$.

In the sequel, we will use that the family $(\varphi_\beta)_{\beta>0}$ converges up to a subsequence to the function $\varphi_\infty$ weakly in $H^1(\Om)$ and strongly in $L^2(\Om)$, as $\beta \to +\infty$, where $\varphi_\infty$ is the eigenfunction associated to $\lambda(\infty,m_\infty)$ (principal eigenfunction associated to the solution of Problem \eqref{mini} in the Dirichlet case). This fact is easy to see by slightly adapting the proof of Proposition \ref{prop:asympDiri}. Moreover, by using a standard rearrangement argument involving the Schwarz symmetrization, the function $\varphi_\infty$ is radial decreasing and
$$
m_\infty=(\kappa+1) \chi_{\{|x|< r_0\}}-1,
$$
where $r_0$ is uniquely determined by the condition $\int_\Om m_\infty=-m_0|\Om|$.

In the sequel, the precise knowledge of $m_\infty$ and $\varphi_\infty$ is at the heart of its proof. More precisely, we will use the two following facts that stem obviously from the fact that $\varphi_\infty$ is invariant by the Schwarz symmetrization:
\begin{enumerate}
\item Set $r=|x|$. There exists a monotone decreasing differentiable function $U_\infty$ such that
$$
\varphi_\infty(x)=U_\infty(r)\qquad \textrm{ for a.e. }x\in \Om.
$$
\item Fix $\varepsilon\in (0,R)$. There exists $c_\infty>0$ such that
$U'_\infty(r)\leq -c_\infty$ for a.e. $r\in [\varepsilon,R]$.
\end{enumerate}

\medskip

To prove the expected result, we need the following lemma.

\begin{lemma}\label{lem1958}
There exists $\beta_0\geq 0$ such that
$$
\beta\geq \beta_0\quad \Longrightarrow \quad \min_{x\in \overline{\Omega}}\varphi_\beta(x)={\varphi_\beta}_{\mid {\partial\Omega}}. 
$$
\end{lemma}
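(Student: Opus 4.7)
The plan is to show that for $\beta$ large the radial profile $U_\beta$ of $\varphi_\beta$ attains its minimum on $[0,R]$ at the endpoint $r=R$; since $\varphi_\beta$ is radial (by rotational invariance of $m_\beta$ and $\Omega$ together with simplicity of the principal eigenvalue), this is exactly the claim. The strategy combines three ingredients: (i) $U_\beta(R)\to 0$, which follows from the proof of Proposition~\ref{prop:asympDiri} since, by radial symmetry, $\int_{\partial\Omega}\varphi_\beta^2 = |\partial\Omega|\,U_\beta(R)^2$ and the left hand side tends to $0$; (ii) a positive lower bound on $U_\beta$ on any compact set strictly inside $\Omega$; and (iii) strict monotonicity of $U_\beta$ on a small annulus $(R-\delta,R)$.

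For (ii), interior elliptic regularity applied to $-\Delta\varphi_\beta=\lambda(\beta,m_\beta)m_\beta\varphi_\beta$, using the uniform $H^1$ bound on $\varphi_\beta$, the uniform $L^\infty$ bound on $m_\beta$, and the boundedness of $\lambda(\beta,m_\beta)$, yields a uniform $C^{1,\alpha}_{loc}(\Omega)$ estimate; coupled with the $L^2$ convergence $\varphi_\beta\to\varphi_\infty$ recalled in the text, this upgrades to uniform convergence on compacts. On $K=\{|x|\leq R-\delta\}$ the continuous positive function $\varphi_\infty$ has a positive infimum, so $U_\beta\geq \tfrac12\inf_K U_\infty$ on $K$ for $\beta$ large. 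To prepare (iii) one localizes the favorable set: the bathtub principle from Section~\ref{ssect:optimality} yields $\{m_\beta=\kappa\}=\{\varphi_\beta>\alpha_\beta\}$ with $\alpha_\beta$ fixed by $|\{\varphi_\beta>\alpha_\beta\}|=c|\Omega|$. Comparing with the Dirichlet limit, where $\alpha_\infty=U_\infty(r_0)>0$ and $r_0<R$, the $L^p$ convergence combined with equimeasurability forces $\alpha_\beta\to\alpha_\infty$. Since $U_\infty$ is strictly decreasing on $[r_0,R]$, choosing $\delta\in(0,R-r_0)$ small enough that $U_\infty(R-\delta)<\alpha_\infty$ and using uniform convergence on $K$ together with $U_\beta(R)\to 0$ produces $U_\beta<\alpha_\beta$ on the whole annulus $\mathcal{A}:=\{R-\delta<|x|<R\}$ for $\beta$ large, i.e.\ $m_\beta\equiv -1$ there.

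With $m_\beta\equiv -1$ on $\mathcal{A}$ the equation becomes $\Delta\varphi_\beta = \lambda(\beta,m_\beta)\varphi_\beta\geq 0$, or in radial variables $(r^{N-1}U_\beta'(r))'\geq 0$ on $(R-\delta,R)$. Integrating from $r$ to $R$ and using the Robin condition $U_\beta'(R)=-\beta U_\beta(R)<0$ gives $r^{N-1}U_\beta'(r)\leq R^{N-1}U_\beta'(R)<0$, so $U_\beta$ is strictly decreasing on $[R-\delta,R]$, establishing (iii). Combining (i)--(iii), one finds $\beta_0$ such that for $\beta\geq\beta_0$ one has $U_\beta(R)<U_\beta(r)$ for every $r\in[0,R)$, which is the desired conclusion. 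The main technical obstacle is upgrading the $L^2$ convergence to uniform convergence on compacts and proving the convergence $\alpha_\beta\to\alpha_\infty$; once these are in hand, the one-line radial ODE estimate finishes the proof.
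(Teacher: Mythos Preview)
Your overall strategy differs from the paper's. The paper obtains convergence $\varphi_\beta\to\varphi_\infty$ in $C^{1,\alpha}(\overline{\Omega})$, that is \emph{up to the boundary}: it sets $\tilde\varphi_\beta=\varphi_\beta-\varphi_\beta|_{\partial\Omega}\in H^2\cap H^1_0(\Omega)$, bounds $\|\Delta\tilde\varphi_\beta\|_{L^2}$ directly from the PDE, invokes the equivalence of the $H^2$ norm with $\|\Delta\cdot\|_{L^2}$ on $H^2\cap H^1_0$, and bootstraps. This global $C^1$ convergence immediately gives $\sup_{[\varepsilon,R]}U'_\beta\le -c_\infty/2<0$ for $\beta$ large, so strict monotonicity on $[\varepsilon,R]$ comes for free and no separate boundary-annulus analysis is needed. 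Your route---interior regularity only, then localizing $\{m_\beta=\kappa\}$ away from $\partial\Omega$ so that $\varphi_\beta$ becomes subharmonic on an annulus, and finally using the Robin condition to force $U'_\beta<0$ there---is a legitimate alternative that trades boundary regularity theory for a structural ODE/maximum-principle argument.

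There is, however, a genuine gap in your localization step. You claim that uniform convergence on $K=\{|x|\le R-\delta\}$ together with $U_\beta(R)\to 0$ yields $U_\beta<\alpha_\beta$ on the whole annulus $\mathcal A=\{R-\delta<|x|<R\}$. But interior convergence controls $U_\beta$ only on $\overline K$, hence at the single radius $R-\delta$, and (i) controls only the endpoint $r=R$; nothing you have written bounds $U_\beta$ on the open interval $(R-\delta,R)$. A priori $U_\beta$ could have a bump above $\alpha_\beta$ somewhere in that interval, in which case $m_\beta\not\equiv -1$ on $\mathcal A$ and the subharmonicity argument collapses. The gap is repairable: the uniform $H^1(\Omega)$ bound gives $\int_{R/2}^R|U'_\beta(r)|^2\,dr\le C$ (since $r^{N-1}$ is bounded below there), whence Cauchy--Schwarz yields $|U_\beta(r)-U_\beta(R)|\le C\sqrt{R-r}$ uniformly in $\beta$; together with $U_\beta(R)\to 0$ and a positive lower bound on $\alpha_\beta$ (which follows from the volume constraint and uniform convergence on any compact of measure larger than $c|\Omega|$), this produces $U_\beta<\alpha_\beta$ on a fixed annulus for all large $\beta$. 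With that patch your approach goes through, though the paper's trick of passing to $\tilde\varphi_\beta$ is arguably the cleaner path.
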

\begin{proof}
Within this proof, we will write similarly with a slight abuse of notation a sequence and any subsequence, for the sake of simplicity.

Set $\tilde \varphi_\beta=\varphi_\beta-{\varphi_{\beta_n}}_{\mid {\partial\Omega}} $. Since $\varphi_\beta$ solves Equation \eqref{eq:EV_main}, one has
\begin{eqnarray*}
\Vert \Delta \tilde\varphi_\beta\Vert_{L^2(\Omega)} &=& \Vert \Delta \varphi_\beta\Vert_{L^2(\Omega)}=\lambda(\beta,m_\beta) \Vert m_\beta\varphi_\beta\Vert_{L^2(\Omega)}\\
&\leq & \lambda(\beta,m_\beta) \Vert m_\beta\Vert_{L^\infty(\Omega)}\Vert \varphi_\beta\Vert_{L^2(\Om)}\leq \max \{1,\kappa\}\lambda(\infty,m_\infty)
\end{eqnarray*}
by using the fact that $\varphi_\beta$ is $L^2$-normalized. Indeed, recall that, according to Proposition \ref{prop:asympDiri}, the family $(\lambda(\beta,m_\beta))_{\beta\geq 0}$ is non-decreasing and converges to $\lambda(\infty,m_\infty)$, in other words the optimal value for Problem \eqref{mini} where Dirichlet boundary conditions are imposed on the partial differential equation \eqref{eq:EV_main}. Since the boundary of $\Omega$ is smooth, the norms $\Vert \cdot\Vert_{H^2(\Om)}$ and 
$\Vert \Delta \cdot\Vert_{L^2(\Om)}$ are equivalent in $H^2(\Om)\cap H^1_0(\Om)$. It follows that the family $(\Vert \tilde \varphi_\beta\Vert_{H^2(\Om)})_{\beta\geq 0}$ is bounded. Moreover, since the Rayleigh quotient $(\Re _{m_\beta}[\varphi_\beta])_{\beta>0}$ is bounded, the sequence of real numbers $({\varphi_{\beta_n}}_{\mid {\partial\Omega}})_{\beta>0}$ is also bounded. We thus easily infer that the sequence $(\Vert \varphi_\beta\Vert_{H^2(\Om)})_{\beta\geq 0}$ is bounded.
 
 It follows from classical bootstrap arguments that 
$(\varphi_{\beta_n})_{n\in\mathds{N}}$ converges to $\varphi_\infty$ in $W^{2,p} (\Om)$ for all $p\in (1,\infty)$ and in $C^{1,\alpha}(\overline{\Om})$ for all $\alpha \in (0,1)$ along a subsequence as $n\to +\infty$. 

Now, let us write $\varphi_\beta (x)=U_\beta(r)$, with $r=|x|\in (0,R]$. 
Recall that there exists $c_\infty> 0$ such that $U'_\infty(r)\leq -c_\infty$ for every $r\in [\varepsilon,R]$. The convergence in $C^{1,\alpha}(\overline{\Om})$ yields:
$$ \sup_{r\in [\varepsilon,R]} U_{\beta_n}'(r)\leq -\frac{c_\infty}{2}<0, \hbox{ when $n$ is large enough.}$$
Similarly, fix $\varepsilon\in (0,R)$ and let $\eta>0$ be such that
$$
\min_{x\in B(0,\varepsilon)}\varphi_\infty>2\eta . 
$$
One has for large $n$:
\begin{equation}\label{eq0902}
\min_{x\in B(0,\varepsilon)}\varphi_{\beta_n}\geq \min_{x\in B(0,\varepsilon)}\varphi_\infty -\eta>\eta.
\end{equation}

Finally, since $U'_{\beta_n}(R)=-\beta_n U_{\beta_n}(R)$ for every $n\in \mathds{N}$, one has
\begin{eqnarray*}
|U_{\beta_n}(R)| & = & \frac{\lambda({\beta_n},m_{\beta_n})}{{\beta_n}}\left|\int_0^Rs^{N-1}\hat m_{\beta_n}U_{\beta_n} (s)\, ds\right| \\
&\leq & \frac{R^{N/2}\max\{1,\kappa\} \lambda(\infty,m_\infty)}{N^{1/2}\beta_n}\left(\int_0^R U_{\beta_n}(s)^2\, s^{N-1}\, ds\right)^{1/2}\\
&= & \frac{R^{N/2}\max\{1,\kappa\} \lambda(\infty,m_\infty)}{N^{1/2}\beta_n}.
\end{eqnarray*}

It follows that $U_{\beta_n}(R)=\operatorname{O}\left(1/\beta_n\right)$. As a consequence, one has\begin{equation}\label{eq0904}
{\varphi_{\beta_n}}_{\mid \{|x|=R\}}<\min_{x\in B(0,\varepsilon)}\varphi_\infty -\eta
\end{equation}
when $n$ is large enough,
and using the fact that $U_{\beta_n}$ is decreasing on $[\varepsilon,R]$, we get
\begin{equation}\label{eq0905}
{\varphi_{\beta_n}}_{\mid \{|x|=R\}}= \min_{x\in B_R}\varphi_{\beta_n}.
\end{equation}
The desired result follows.

\end{proof}

 Let us now prove that $E$ is a centered ball. From now on, the parameter $\beta$ is assumed fixed and such that $\beta\geq \beta_0$, where $\beta_0$ is the real number defined in Lemma \ref{lem1958}.
 
 We set $\varphi=\varphi_\beta$, $m=m_\beta$ and $\lambda=\lambda(\beta,m_\beta)$. According to Lemma \ref{lem1958}, there holds
$$
\min_{x\in \overline{\Omega}}\varphi(x)={\varphi}_{\mid {\partial\Omega}}.
$$
 
We consider $\varphi^S $ (resp. $m_E^S$) the Schwarz rearrangement of $\varphi$ (resp. $m_E$), and $E^S$ the centered ball of volume $|E|$. We prove that every term is the Rayleigh quotient will decrease with this rearrangement, and that $\varphi^S$ is admissible for the variational formulation of $\lambda(m_{E^S})$. Note that as $\varphi$ is radial, the Schwarz rearrangement can be seen as the monotone decreasing rearrangement of $U$ defined as $U(r)=\varphi(x)$ where $r=|x|$ and $x	\in B_{R}$. First, according to the Poly\`a-Szego inequality, there holds
$$
\int_{B_{R}}|\nabla\varphi|^2dx\geq \int_{B_{R}}|\nabla\varphi^S|^2dx,
$$
This inequality is valid for functions in $H^1_{0}(B_{R})$ but not, in general, for functions in $H^1(\Om)$. Nevertheless, being radial, $\varphi$ is constant on $\partial B_{R}$, and according to Lemma \ref{lem1958} the function $\varphi-\varphi_{|\partial B_{R}}$ belongs to $H^1_{0}(B_{R})$. The inequality above remains then valid in that case by considering $\varphi-\varphi_{|\partial B_{R}}$ instead of $\varphi$. The boundary term satisfies
$$
\int_{\partial B_{R}}\varphi^2\, dx\geq \int_{\partial B_{R}}(\varphi^S)^2\, dx
$$
since $\varphi$ is radial and $\varphi^S_{|\partial B_{R}}=\min_{B_{R}} \varphi$.
Using also the Hardy-Littlewood inequality (which does not require a boundary hypothesis, though it requires a sign condition, which can be overcome as in the proof of Theorem \ref{Robin_connected} by writing $m\varphi^2=(m+1)\varphi^2-\varphi^2$, we finally obtain
$$ \lambda \geq  \frac{\int_{B_{R}}|\nabla\varphi^S|^2dx+\beta \int_{\partial B_{R}}(\varphi^S)^2}{\int_{B_{R}} m_{E}^S(\varphi^S)^2} \geq \lambda (m_{E^S}),$$
the last inequality following from the Courant-Fisher principle, the fact that $\varphi^S$ is admissible in the formulation of $\lambda(m_{E^S})$ since $\int_{B_{R}}m_{E}^S(\varphi^S)^2\geq \int_{B_{R}}m_{E}\varphi^2>0$ and the fact that $m_{E}^S=m_{E^S}$.
But since $E$ is a minimizing set, $\lambda (m_{E^S}) \geq \lambda (m_E)= \lambda$. Hence all the previous inequalities are equalities, which is possible if and only if $U$ and $m_E$ are decreasing. It implies in particular that $E$ is a ball.

\medskip 

\noindent{\bf Part 4: case where $\Om\setminus E$ is rotationally symmetric.} Let us assume now that $E^c=\Om\backslash \overline{E}$ is rotationally symmetric. Then similar results occur. We do not give all details since the proof is then very similar to the one written previously.  We only underline the slight differences in every step. 
\begin{itemize}
\item {\bf Part 1:} introducing the function $v_{ij}$ defined by \eqref{def:vradiale}, one shows using the same computations and the fact that $\varphi$ is constant on $\partial E^c\cap\Om$ that for every $i\neq j$, $v_{ij}$ solves the partial differential equation
\begin{equation}\label{eq:varray3}
\left\{ \begin{array}{rll} \Delta v_{ij} &= \lambda v_{ij} \quad &\hbox{ in } E^c ,\\
\partial_n v_{ij} + \beta v_{ij} &=0 \quad &\hbox{ on } \partial E^c \cap \partial \Om,\\
           v_{ij} &=0 \quad &\hbox{ on } \partial E^c\backslash\partial \Om. 
          \end{array}\right.
\end{equation} 
Multiplying the main equation by $v_{ij}$ and integrating by parts leads to
$$
\lambda \int_{E^c}v_{ij}^2=-\int_{E^c}|\nabla v_{ij}|^2-\beta \int_{\partial E^c\cap \partial \Omega}v_{ij}^2.
$$
It follows that $v_{ij}$ vanishes in $E^c$ and that $\varphi$ is radial in $E^c$. The end of Part 1 remains then unchanged and we obtain that $\varphi$ is radial in the whole domain $\Om$.
\item {\bf Part 2:} It can be adapted directly by changing the term $\lambda$ into $-\kappa \lambda$ everywhere. It can be noticed that we did not use the sign of the left-hand side in the ordinary differential equation satisfied by $U$, namely \eqref{eq:Ur}. We only used the fact that $U$ does not vanish. 
\item {\bf Part 3:} Once we know that $\Om$ is a ball, we know that both $E^c$ and $E$ are rotationally symmetric, so the same arguments as in Part 3 above apply.
\end{itemize}
\qed

\medskip

\noindent{\bf Proof of Proposition \ref{prop:geomprop}.} We will apply the chain of arguments of Part 3. For that purpose, it suffices to prove that, if $m$ is a radially symmetric function, then so is the principal eigenfunction $\varphi$. Let us argue by contradiction, by considering a radial function $m$, the associated principal eigenvalue $\lambda(m)$, and assuming that $\varphi$, a principal eigenfunction, is not radial. Following the method introduced in Part 1, we claim that this statement is equivalent to the existence of two integers $i,j\in \llbracket1,N\rrbracket$ with $i\neq j$ such that the function $v_{ij}$ defined by \eqref{def:vradiale} does not vanish identically in $\Om$ and solves in a variational sense the system
$$
\left\{ \begin{array}{rll} -\Delta v_{ij} &= \lambda m v_{ij} \quad &\hbox{ in } \Omega ,\\
\partial_n v_{ij} + \beta v_{ij} &=0 \quad &\hbox{ on } \partial \Om,
          \end{array}\right.
$$
meaning that $v_{ij}$ is also a principal eigenfunction associated to the principal eigenvalue $\lambda(m)$. In particular, $v_{ij}$ has a constant sign. Since $v_{ij}$ stands for the derivative of $\varphi$ with respect to $\theta$, the angular variable associated to the polar coordinates in the plane $(Ox_ix_j)$, we get a contradiction with the periodicity of $\varphi$ with respect to $\theta$. The first part of the proposition hence follows. Note that the second one is easily obtained by applying Theorem \ref{theo:optimball}. 
\qed

\begin{remark}
According to Section \ref{sec:period}, there is a correspondance between the optimal shape $E$ for Neumann boundary conditions in a hypercube and the optimal shape in a periodic cell. One can refer for instance to \cite{BHR,Roques-Hamel}. As a consequence, the proof of Theorem \ref{theo:optimball} can be extended to the case where $\Omega$ is a periodic-cell (in $\R^2$, $\Om$ is a square on which we impose periodic boundary conditions) by noting that in such a case, the eigenfunction $\varphi$ satisfies homogeneous Neumann boundary conditions on the boundary of the periodic cell. Indeed, this is a direct consequence of the symmetry of $E$ and the periodic boundary conditions. We then have to investigate the case of a square $C$ on which homogeneous Neumann boundary conditions are imposed and $E$ is a centered ball. Notice that Theorem \ref{theo:optimball} cannot be directly applied in that case since $\partial C$ is not $C^1$. Nevertheless, we claim that the proof and statement of Theorem \ref{theo:optimball} can be adapted in the periodic framework. Indeed, only Part 2 of the proof has to be modified. Using the same notations as in the proof of Theorem \ref{theo:optimball}, it is enough to notice that one can choose (for instance) $x_a=(0,\dots,0, L_N)$. Since the boundary is locally flat around $x_a$, the same argument applies and the conclusion of Theorem  \ref{theo:optimball} remains true in that case.

\end{remark}

\section{Applications}\label{sec:applications}

In the two next sections, we provide hereafter several numerical simulations based on the algorithm described in Section \ref{sec:numerics}. Recall that for reasonable parameters, the convergence of this algorithm to a local minimizer has been shown in \cite{CGIKO,MR2886017}. 

Our implementation relies on the Matlab Partial Differential Equation Toolbox using piecewise linear and globally continuous finite elements. We worked on a standard desktop machine and the resulting code works out the solution very quickly (see the convergence curves). 

In the case where $\beta=0$ and $\kappa=1/2$ (the sets of parameters that we have chosen in the sequel), there exists a principal eigenvalue for Problem \eqref{eq:EV_main} if, and only if $c\in (0,2/3)$. 

\subsection{The case of a \texorpdfstring{$N$}{Lg}-orthotope with Neumann boundary conditions}\label{sec:rectangle}
We assume in this section that $\beta=0$ and $\Om= \Pi_{k=1}^{N}(0,L_{k})$, and we aim at describing more precisely $E^{*}$ solution of \eqref{minishape} in this framework. 

We have already recalled in Sections \ref{sec:period} and \ref{sec:numerics} that a common conjecture in dimension $2$  in this framework is that the minimizing set has constant curvature, that is, it would be a quarter of ball, a stripe, or the complementary of a quarter of ball depending on the parameters (see \cite{KaoLouYanagida, Roques-Hamel}). We will prove that this conjecture is false when $\beta=0$.

\begin{proposition} \label{thm:rectangle}
Assume that $N\geq 2$, $\Om= \Pi_{k=1}^{N}(0,L_{k})$, $\beta=0, \kappa\in (0,+\infty)$ and $c\in(0,\frac{1}{\kappa+1})$.
If $E^*$ is a minimizing set for \eqref{minishape}, then
\begin{enumerate}
\item (Steiner symmetry) $m_{E^{*}}$ is monotonic with respect to $x_{k}\in (0,L_{k})$ for all $k$. 
\item If $\partial E^{*}\cap \Om$ is analytic, then $\partial E^{*}\cap \Om$ does not contain any piece of sphere. 
\end{enumerate}
\end{proposition}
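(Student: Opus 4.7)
The plan is: for Part 1, invoke Steiner symmetrization in the equivalent periodic problem; for Part 2, argue by contradiction, showing that a spherical piece in $\partial E^{*}\cap\Omega$ would force $\varphi$ to be radially symmetric around the sphere's center, and then derive a contradiction using the Neumann boundary condition on $\partial\Omega$ (or equivalently via Theorem \ref{theo:optimball}).

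For Part 1, I would use the equivalence recalled in Section \ref{sec:period} between the Neumann problem on $\Omega=\prod_{k}(0,L_{k})$ and the periodic problem on the doubled orthotope $\widetilde\Omega=\prod_{k}(-L_{k},L_{k})$. By the Steiner symmetrization result of \cite{BHR} for the periodic case, the doubled minimizer $\widetilde E^{*}$ is symmetric-decreasing with respect to each hyperplane $\{x_{k}=0\}$, so that restricting back to $\Omega$ each $x_{k}$-slice of $E^{*}$ becomes an interval of the form $[0,r_{k}(\widehat x_{k})]$, which is precisely the claimed monotonicity of $m_{E^{*}}$. An alternative is to apply a direct one-sided monotone rearrangement of the pair $(m,\varphi)$ in $x_{k}$ along each line parallel to $e_{k}$, using a one-dimensional P\'olya--Szeg\H{o} inequality on lines together with its strict equality case to force $m_{E^{*}}$ to be already monotone.

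For Part 2, I argue by contradiction: assume $\partial E^{*}\cap\Omega$ contains a piece of sphere. Since $\partial E^{*}\cap\Omega$ is analytic by hypothesis, a whole connected component $\Gamma$ of $\partial E^{*}\cap\Omega$ must lie on a sphere $S(O,r_{0})$ of center $O$ and radius $r_{0}$ (two analytic hypersurfaces agreeing on an open subset of a connected component agree globally on that component). By the first-order optimality condition (Section \ref{ssect:optimality}), $\varphi\equiv\alpha$ on $\Gamma$, so $\nabla\varphi$ is radial with respect to $O$ along $\Gamma$, and the angular-derivative functions $v_{ij}(x)=(x_{i}-O_{i})\partial_{j}\varphi-(x_{j}-O_{j})\partial_{i}\varphi$ all vanish on $\Gamma$. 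Mimicking the Rayleigh-quotient test-function argument from Part 1 of the proof of Theorem \ref{theo:optimball}, I extend each $v_{ij}$ by zero outside the connected component $W$ of $E^{*}$ (or of $\Omega\setminus\overline{E^{*}}$) bordered by $\Gamma$, obtaining $\widetilde v_{ij}\in H^{1}(\Omega)$ thanks to $v_{ij}|_{\Gamma}=0$. Using the PDE $-\Delta v_{ij}=\lambda m_{E^{*}} v_{ij}$ valid in $W$ together with integration by parts, the Neumann condition $\partial_{n}\varphi=0$, and the constraint from Part 1 that Steiner monotonicity forces $O$ to lie in the ``corner region'' relative to $\Omega$, the boundary integrals on $\partial W\cap\partial\Omega$ vanish, so that $\Re_{m_{E^{*}}}[\widetilde v_{ij}]=\lambda(E^{*})$; then simplicity of the principal eigenvalue and positivity of $\varphi$ force $\widetilde v_{ij}\equiv 0$, hence $v_{ij}\equiv 0$ in $W$ for every $i\neq j$, i.e.\ $\varphi$ is radial around $O$ on $W$.

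Once $\varphi$ is radial on $W$, the normal derivative $\partial_{n}\varphi=U'(r_{0})$ is a constant along $\Gamma$, so the Cauchy--Kowalevski theorem applied to the analytic elliptic equation with rotationally invariant Cauchy data propagates radiality across $\Gamma$; combined with analytic continuation inside each of $E^{*}$ and $\Omega\setminus\overline{E^{*}}$ (which are connected by Part 1), this shows that $\varphi$ is radial around $O$ on the whole of $\Omega$. Writing $\varphi(x)=U(|x-O|)$, the Neumann condition on a face $\{x_{k}=L_{k}\}$ not passing through $O$ reads $U'(r)(L_{k}-O_{k})/r=0$ for every point on that face, which forces $U'$ to vanish on a nontrivial interval of radii; combined with the radial ODE $U''+(N-1)U'/r=\pm\lambda m U$ and the analyticity of $U$, this implies $U\equiv 0$ globally, contradicting $\varphi>0$. (Equivalently, once $\varphi$ is radial one can invoke the orthotope-via-periodic form of Theorem \ref{theo:optimball} sketched in the remark following it to reach the contradiction.) The main obstacle will be the careful verification, in the test-function step, that the boundary integrals on $\partial W\cap\partial\Omega$ really vanish for every geometry of $W$ compatible with Steiner monotonicity; in particular, when $W$ reaches faces of $\Omega$ that do not pass through $O$, a reflection across the coordinate hyperplanes through $O$ is a natural device to reduce to a case where the boundary terms are controlled.
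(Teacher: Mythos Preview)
Your Part~1 is essentially the paper's argument: pass to the periodic problem on the doubled cell $C=\prod_k(-L_k,L_k)$ and use Steiner symmetrization there, then read off monotonicity on $\Omega$.

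For Part~2 your route diverges from the paper's and carries a genuine gap. The paper never tries to prove $\varphi$ is radial inside $\Omega$: it stays in the periodic cell. Since $\partial E^{*}\cap\Omega$ is analytic, so is (each reflected copy of) $\partial\widetilde{E^{*}}$, and the piece of sphere forces the whole connected hypersurface $\partial\widetilde{E^{*}}$ to be a sphere; hence $\widetilde{E^{*}}$ or $C\setminus\overline{\widetilde{E^{*}}}$ is a ball in $C$. One then applies Theorem~\ref{theo:optimball} directly to this ball (with the easy adaptation that $\partial C$ is flat near the contact point $x_a$), yielding the contradiction that $C$ would have to be a ball. No test-function step, no propagation argument.

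Your approach instead needs the Rayleigh-quotient/test-function computation for $\widetilde v_{ij}$ on $W$, and here the boundary term on $\partial W\cap\partial\Omega$ is the problem, as you yourself flag. Concretely, on a face $\{x_\ell=c\}$ with $\ell\in\{i,j\}$ one has $v_{ij}=(c-O_\ell)\,\partial_{k}\varphi$ (where $\{k\}=\{i,j\}\setminus\{\ell\}$), which is \emph{not} zero unless $O_\ell=c$, i.e.\ unless the center $O$ sits exactly on that face. So the identity $\Re_{m_{E^*}}[\widetilde v_{ij}]=\lambda(E^{*})$ fails unless $O$ is at a corner of $\Omega$. Monotonicity of $m_{E^{*}}$ alone does not pin $O$ to a corner: a piece of a sphere centered at a point strictly interior to $\Omega$ (or outside $\Omega$ in a non-corner direction) can still bound a set that is monotone in every coordinate. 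To force $O$ to the corner you would need the reflection-plus-Steiner-symmetry argument in the periodic cell --- but once you are there, the paper's two-line conclusion is available and the whole test-function/Cauchy--Kowalevski machinery becomes unnecessary. In short, the step you identify as the ``main obstacle'' is not a technicality but the crux, and resolving it collapses your argument into the paper's.
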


Note that the hypothesis that $\partial E^*\cap \Om$ is analytic is always satisfied if $N=2$ and $\beta=+\infty$, as it is shown in \cite{CKT}.
When $\beta>0$, we know from Theorem \ref{theo:optimball} that $E^{*}$ cannot be a ball. But we do not know how to conclude when $E^{*}$ hits the boundary of $\Om$ in that case since there is no link with the periodic framework as for Neumann boundary conditions. 

\begin{proof}
(1) Let $\widetilde{E^{*}}$ be the reflection of $E^{*}$ with respect to $\{x_{k}=0\}$ for $k=1,...,N$. According to  \cite[Appendix C]{Roques-Hamel}, this set minimizes the periodic principal eigenvalue in the cell $C=\Pi_{k=1}^{N}(-L_{k},L_{k})$:
$$
\lambda^{*} = \min \left\{\frac{\int_{C} |\nabla \psi|^{2} }{\int_{C}m_{\widetilde{E^{*}}}\psi^{2}}, \ \psi \in H^{1}_{per}(C)\textrm{ and }\int_\Om m\psi^2>0\right\}.
$$
Let $\varphi$ be the associated periodic principal eigenfunction, which is also the minimizer of the above Rayleigh quotient. 
Then, classical rearrangement inequalities yield 
$$\lambda^{*}=\frac{\int_{C} |\nabla \varphi|^{2} }{\int_{C}m_{\widetilde{E^{*}}}\varphi^{2}} \geq \frac{\int_{C} |\nabla {\varphi^{S}}|^{2} }{\int_{C}m_{(\widetilde{E^{*}})^{S}}({\varphi}^{S})^{2}}\geq \lambda^*,$$
where $\varphi^S$ and $(\widetilde{E^*})^S$ denote the successive Steiner symmetrization of $\varphi, \widetilde{E^*}$ with respect to $x_{1}=0$, $x_{2}=0$,..., $x_{N}=0$. As the equality holds, this yields that ${\varphi} = {\varphi}^{S} (\cdot+X)$ for some $X\in \R^{N}$. Let $k\in \llbracket1,N\rrbracket$. Since ${\varphi}$ is symmetric with respect to $\{x_{k}=0\}$ by construction, this necessarily implies that ${\varphi}$ is either nonincreasing or nondecreasing with respect to $x_{k}$. The conclusion follows by using that $E^{*}= \{\varphi > \alpha\}$ up to a set of zero measure.

\smallskip

(2)  If $\partial E^*\cap \Om$ contains a piece of sphere, then so does $\partial \widetilde{E^{*}}$ and, thanks to analyticity, $\partial \widetilde{E^{*}}$ is itself a sphere.
Then $\widetilde{E^{*}}$ or $C\backslash \operatorname{clos}(\widetilde{E^{*}})$ is a ball. Notice that Theorem \ref{theo:optimball} does not apply in that case since $\partial C$ is not $C^1$. Nevertheless, we claim that the proof and statement of Theorem \ref{theo:optimball} can be adapted in the periodic framework. Indeed, only Part 2 of the proof has to be modified. Using the same notations as in the proof of Theorem \ref{theo:optimball}, it is enough to notice that one can choose (for instance) $x_a=(0,\dots,0, L_N)$. Since the boundary is locally flat around $x_a$, the same argument applies and the conclusion of Theorem  \ref{theo:optimball} remains true in that case.
This yields that the set $C=\Pi_{k=1}^{N}(-L_{k},L_{k})$ must be a sphere, whence a contradiction.
\end{proof}

The numerical results for the square in the Neumann case are gathered on Figure \ref{fig:neusquare} and two convergence curves illustrating the efficiency of the method are drawn on Figure \ref{fig:CVcurveNeusq}.

\begin{figure}[h!]
\centering
\subfigure[$c=0.2$ - optimal domain]{\includegraphics[scale=0.35]{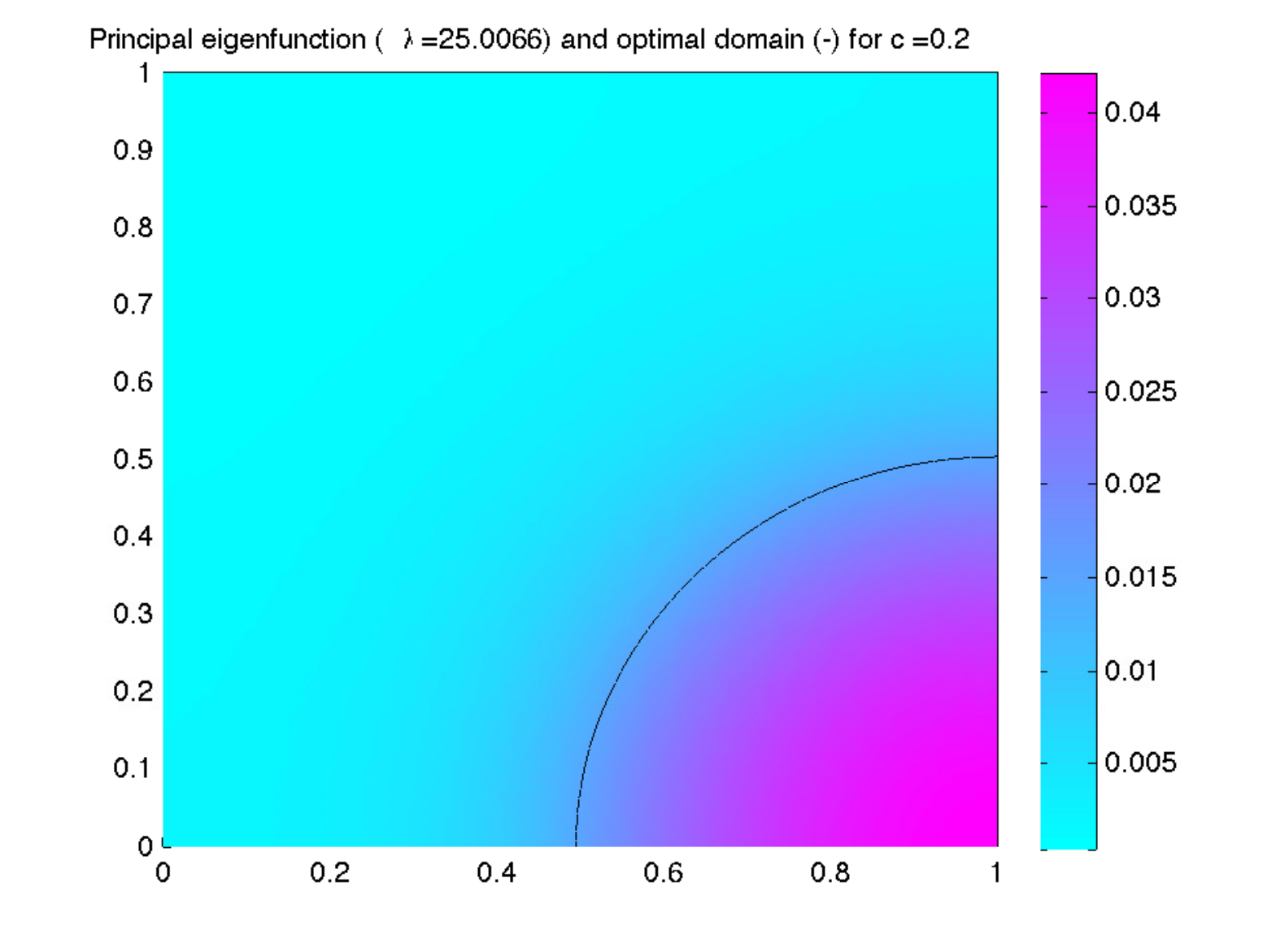}}
\subfigure[$c=0.3$ - optimal domain]{\includegraphics[scale=0.35]{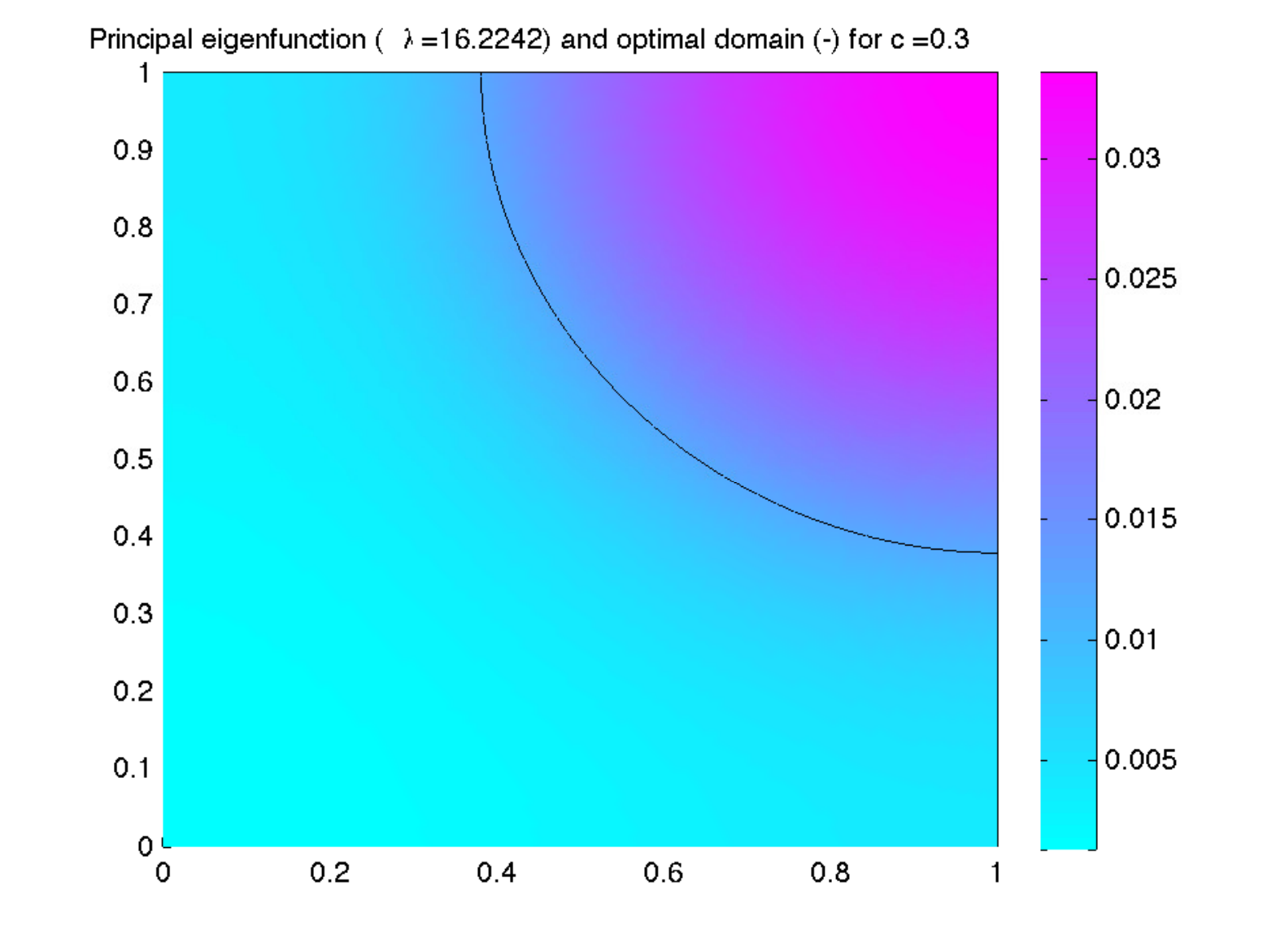}}
\subfigure[$c=0.4$ - optimal domain]{\includegraphics[scale=0.35]{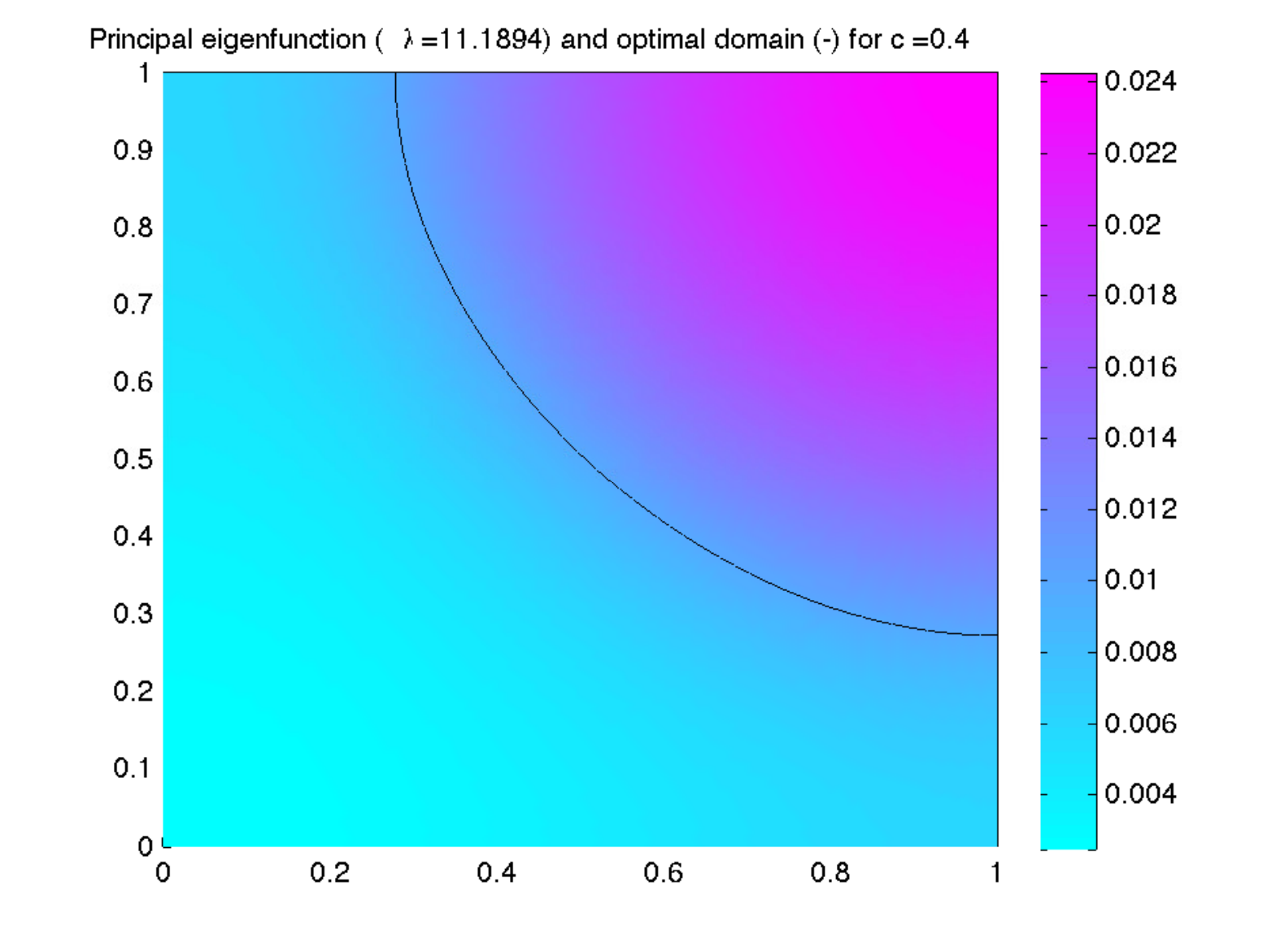}}
\subfigure[$c=0.5$ - optimal domain]{\includegraphics[scale=0.35]{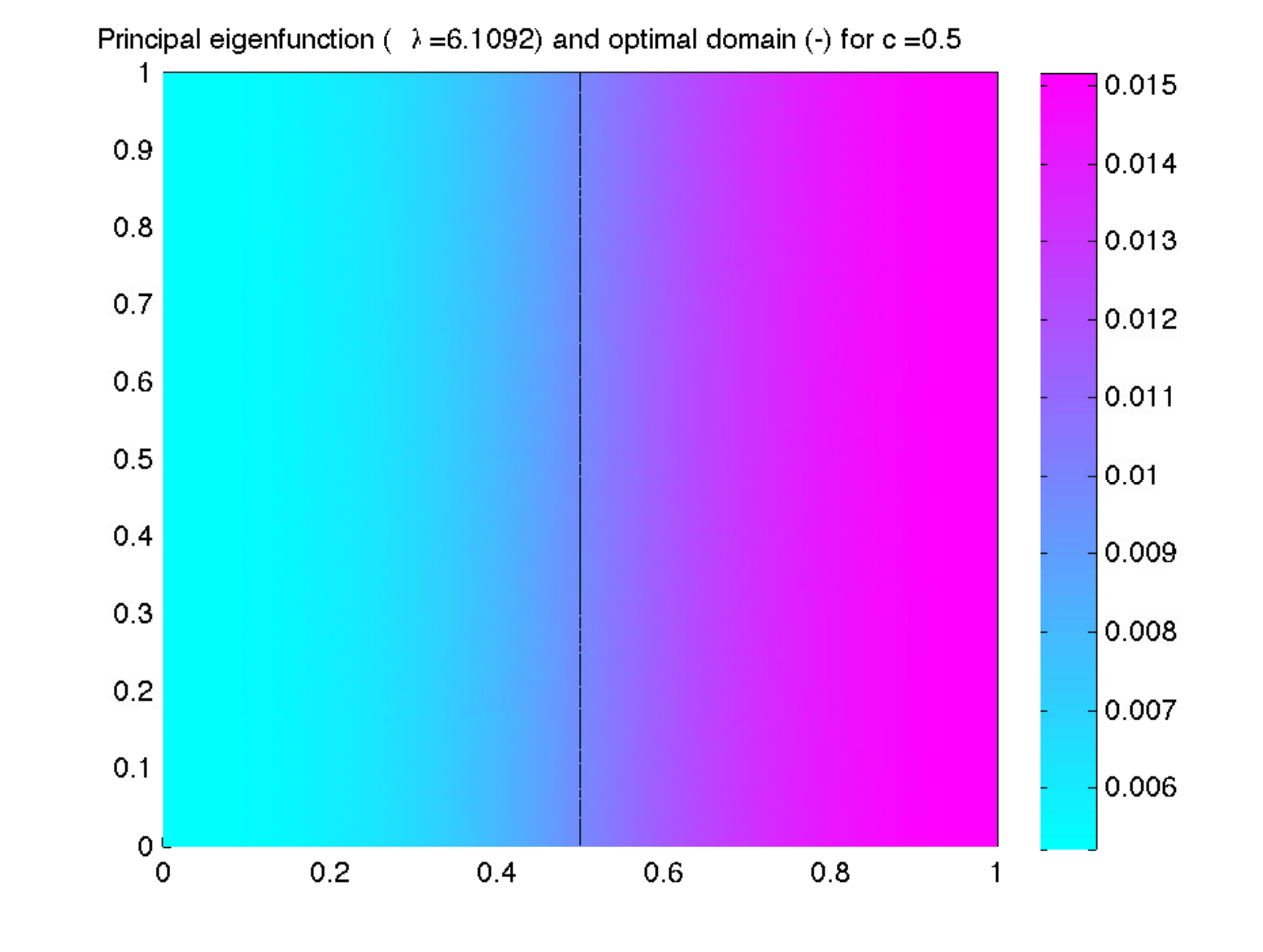}}
\subfigure[$c=0.6$ - optimal domain]{\includegraphics[scale=0.35]{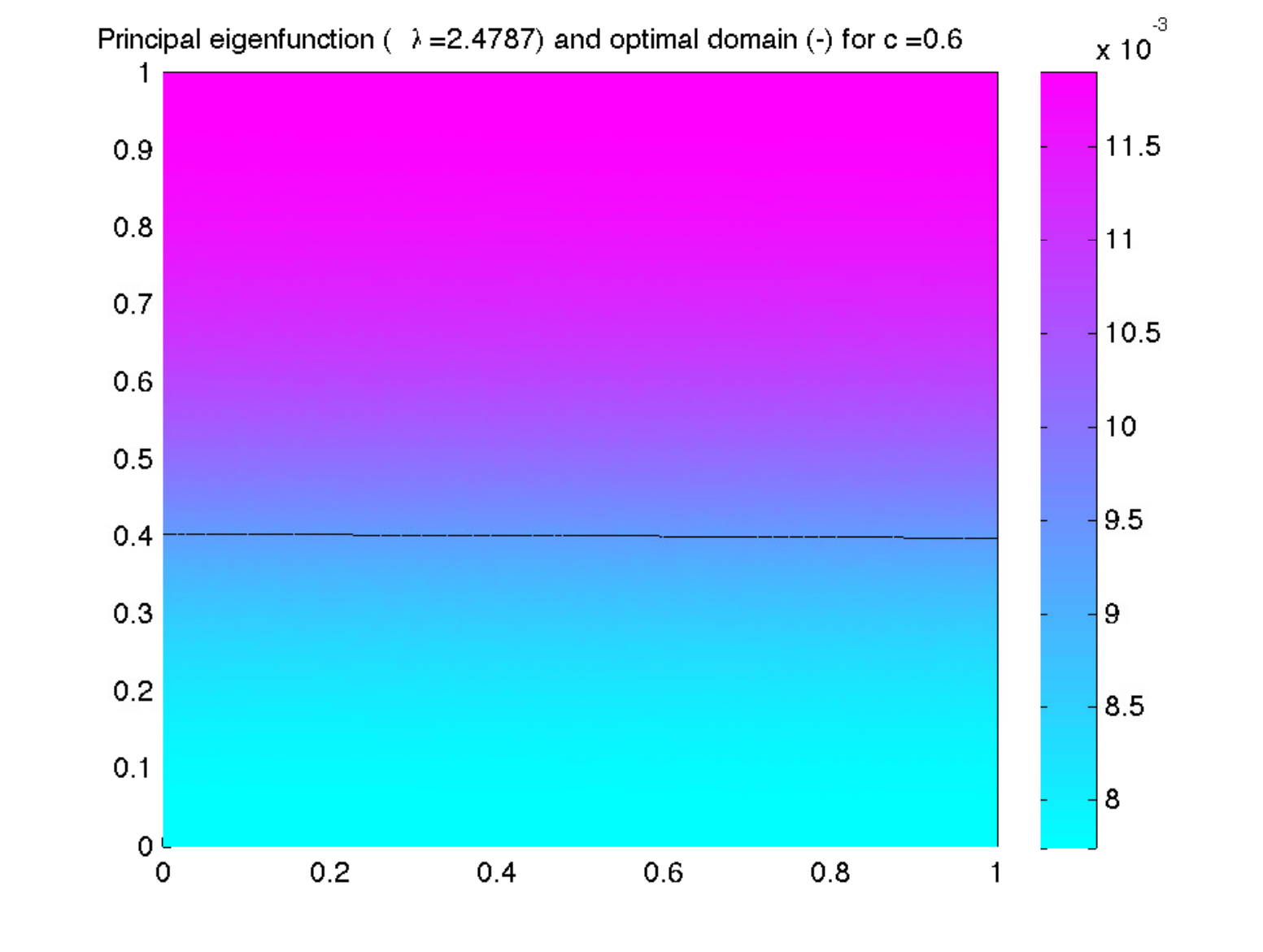}}
\caption{$\Omega=(0,1)^2$. Optimal domains in the Neumann case ($\beta=0$) with $\kappa=0.5$ and $c\in \{0.2,0.3,0.4,0.5,0.6\}$\label{fig:neusquare}} 
\end{figure}

\begin{figure}[h!]
\centering
\subfigure[$c=0.2$ - convergence curve]{\includegraphics[scale=0.35]{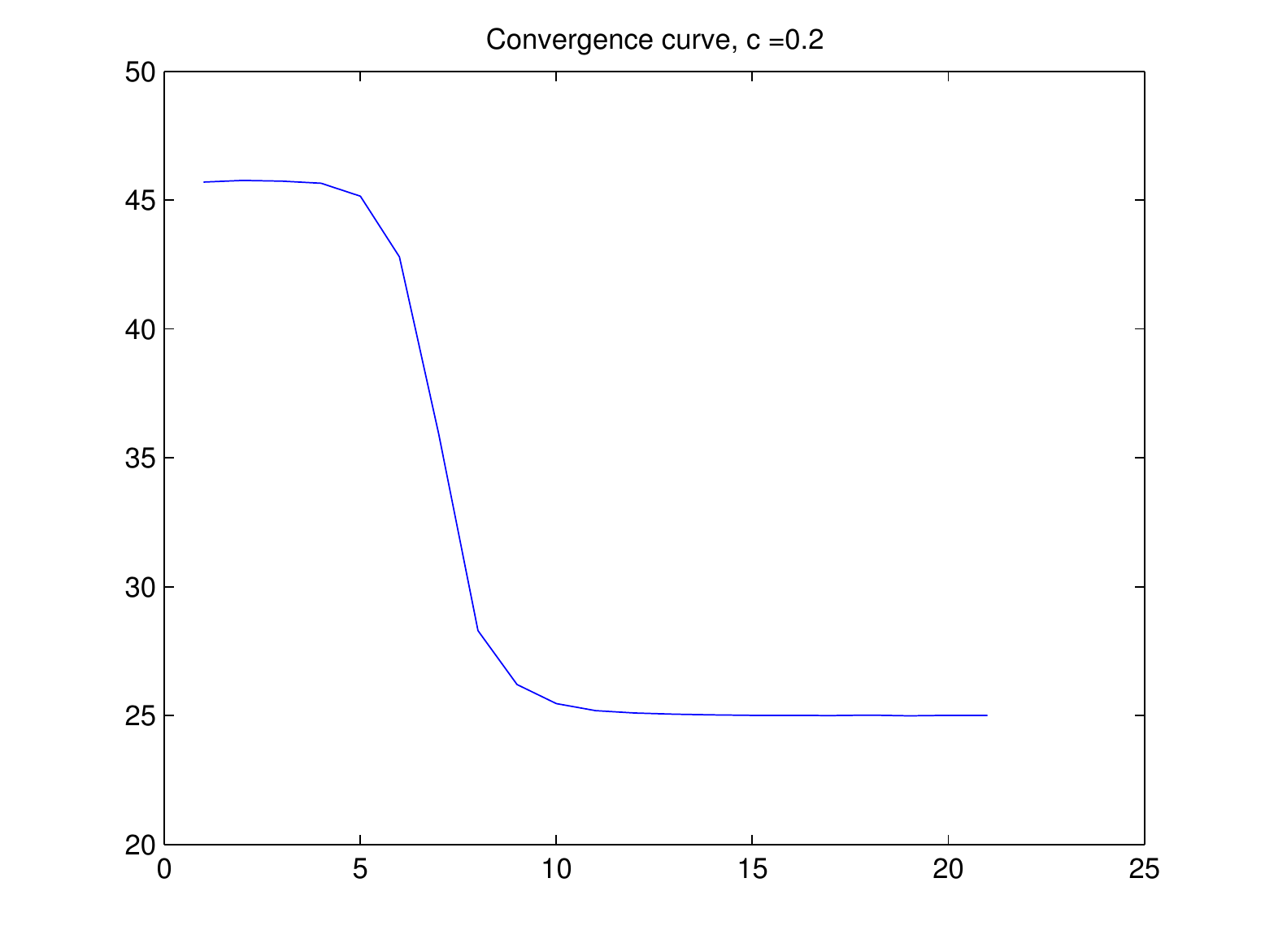}}
\subfigure[$c=0.6$ - convergence curve]{\includegraphics[scale=0.35]{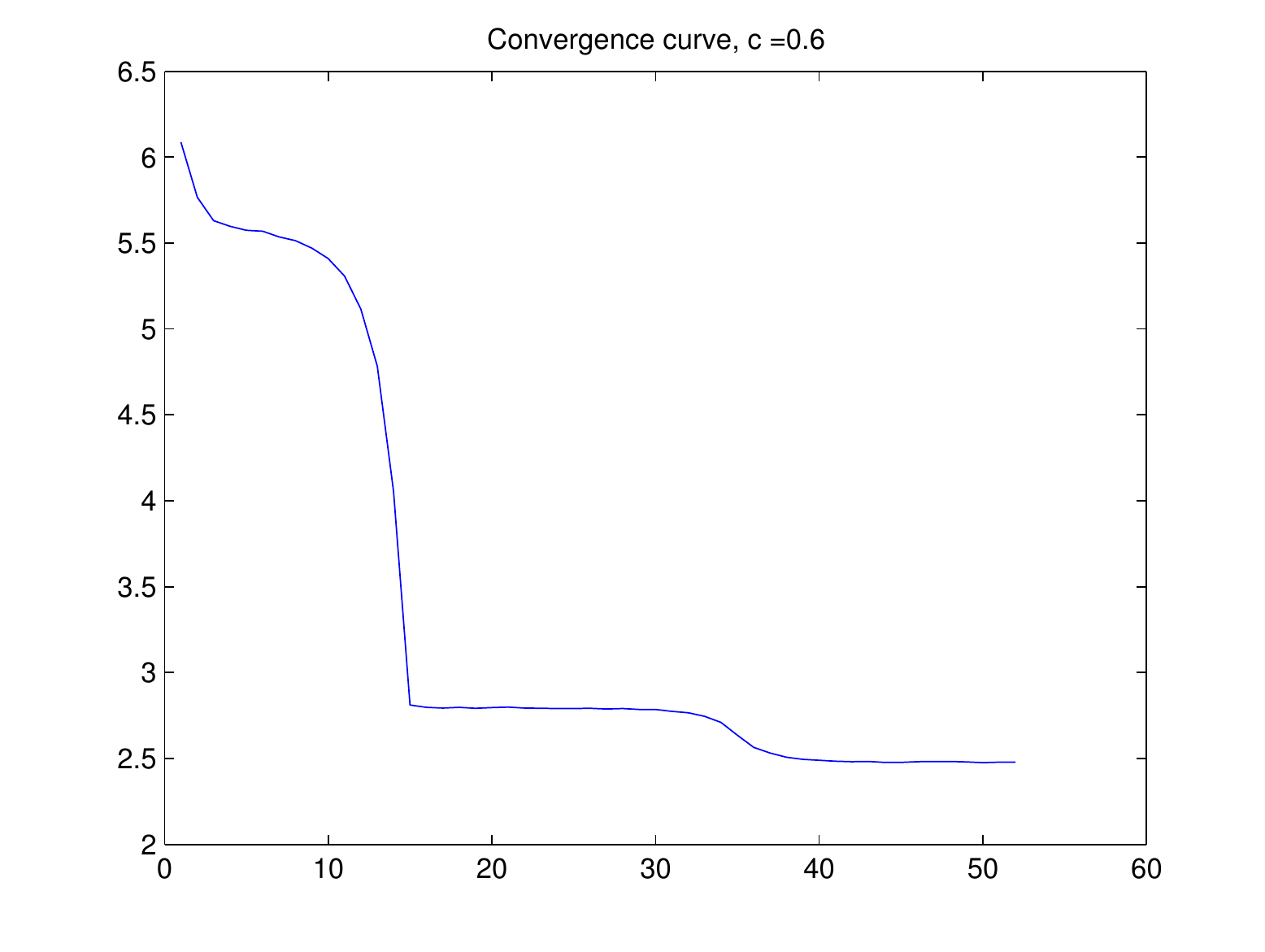}}
\caption{$\Omega=(0,1)^2$. Two examples of convergence curves in the Neumann case ($\beta=0$) with $\kappa=0.5$, $c=0.2$ (left) and $c=0.6$ (right)\label{fig:CVcurveNeusq}} 
\end{figure}

\subsection{The case of a ball}

We assume in this section that $\Om= B(0,1)$, and we aim at describing more precisely $E^{*}$ solution of \eqref{minishape} in this framework. 

\begin{proposition} \label{thm:ball}
Assume that $N=2$, $\Om= B(0,1)$, $\beta=0$, $\kappa\in(0,+\infty)$ and $c\in(0,\frac{1}{\kappa+1})$.
If $E^{*}$ is a minimizing set for \eqref{minishape}, then 
\begin{enumerate}
\item (Circular Symmetry) There exists $\theta_{0}\in [0,2\pi)$ such that $E^{*}$ is symmetric with respect to the half straight line $\{\theta=\theta_{0}\}$ in the radial coordinates $(r,\theta)$. Moreover, for all $r\in (0,1)$, $\{\theta \in [0,2\pi), (r,\theta) \in E^{*}\}$ is an interval. 
\item If $\beta =0$, then $E^{*}$ is not a ball. 
\end{enumerate}
\end{proposition}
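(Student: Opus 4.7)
\medskip
\noindent\textbf{Proof plan for Proposition~\ref{thm:ball}.} My plan is to establish (1) via a circular (or ``cap'') symmetrization in the angular variable, which is natural since $\Om$ is a disk, and to deduce (2) by combining Theorem~\ref{theo:optimball} with Theorem~\ref{thm3_simple}.

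For (1), let $E^*$ be any minimizer and $\varphi$ its principal eigenfunction. I would fix an axis direction $\theta_0 \in [0,2\pi)$ and perform, on each circle $\{|x|=r\}$ with $r\in(0,1)$, the symmetric decreasing rearrangement about $\theta_0$: this produces a set $\widetilde E$ whose angular slice on each circle is an arc of the same length centered at $\theta_0$, and a function $\widetilde\varphi \in H^1(\Om)$ whose restriction to each circle is symmetric decreasing about $\theta_0$. Writing $m_{E^*}\varphi^2 = (\kappa+1)\mathbf{1}_{E^*}\varphi^2 - \varphi^2$, the Hardy--Littlewood inequality applied on each circle yields $\int_\Om m_{\widetilde E}\widetilde\varphi^2 \geq \int_\Om m_{E^*}\varphi^2>0$, while the Poly\`a--Szeg\"o-type inequality for circular symmetrization (in the sense of Sarvas and Baernstein) gives $\int_\Om |\nabla\widetilde\varphi|^2 \leq \int_\Om |\nabla\varphi|^2$. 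Since $\beta=0$ the Rayleigh quotient has no boundary term, hence $\Re_{m_{\widetilde E}}[\widetilde\varphi] \leq \Re_{m_{E^*}}[\varphi] = \lambda^*$, so that $\widetilde E$ is also a minimizer. Using that $E^*$ coincides up to a null set with a level set $\{\varphi > \alpha\}$ (see Section~\ref{ssect:optimality}), the equality case of the circular Poly\`a--Szeg\"o inequality — treated along the same lines as in the 1D proof of Theorem~\ref{Robin_connected} — forces $\varphi(r,\cdot)$ to itself be symmetric decreasing about $\theta_0$ on each circle, yielding both the reflection symmetry of $E^*$ with respect to $\{\theta=\theta_0\}$ and the fact that its angular section at each radius is an interval.

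For (2), I argue by contradiction: if a minimizer $E^*$ were a ball, it would be rotationally symmetric about its own center with a single connected component. Since a minimizer satisfies the first-order optimality conditions (Section~\ref{ssect:optimality}), the first conclusion of Theorem~\ref{theo:optimball} applies and forces $\Om$ to be a ball concentric with $E^*$; as $\Om=B(0,1)$, this would make $E^*$ itself a centered ball. But Theorem~\ref{thm3_simple}, applied with $N=2$ and $\beta=0$, precisely excludes the centered ball as a minimizer, a contradiction.

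The main technical obstacle, exactly as in the proof of Theorem~\ref{Robin_connected}, lies in the equality case of the circular Poly\`a--Szeg\"o inequality. To conclude that \emph{every} minimizer (and not only its symmetrization) has the claimed structure, one needs a strict version of the gradient inequality that characterizes equality by requiring $\varphi$ to already be symmetric decreasing about $\theta_0$ on each circle. Once this rigidity is secured, the bang-bang characterization $E^*=\{\varphi>\alpha\}$ propagates the symmetry from $\varphi$ to $E^*$ and closes the argument.
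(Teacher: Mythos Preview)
Your proposal is correct and follows essentially the same route as the paper: part~(1) via circular symmetrization and the rigidity of its equality case, part~(2) by combining Theorem~\ref{theo:optimball} with the non-optimality of the centered ball (Theorem~\ref{thm3_simple}/\ref{th:deformation}). One small remark: where you note that the boundary term vanishes because $\beta=0$, the paper instead observes that circular symmetrization on a disk actually \emph{preserves} $\int_{\partial\Om}\varphi^2$, so the argument for~(1) extends verbatim to $\beta>0$; also, the equality case in the circular P\'olya--Szeg\H{o} inequality yields symmetry about \emph{some} axis (a rotate of your chosen $\theta_0$), which is exactly the existential statement you need.
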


The only related numerical simulations we know in this framework have been performed in \cite{KaoLouYanagida}, when $\Om$ is an ellipse. Only one set of parameters has been tested in this earlier work and in that case $E^{*}$ looks like a portion of disk hitting the boundary. 

\begin{proof}
(1) This follows from similar arguments as in the proof of Proposition \ref{thm:rectangle}. We refer to 
\cite{MR846062} for details on circular symmetrization. We just notice here that the term $\beta \int_{\partial \Om}\varphi^{2}$ is preserved with respect to circular symmetrization. 

(2) If $E^{*}$ was a ball, then it would be a centered one according to Theorem \ref{theo:optimball}. Then the result follows from Theorem \ref{th:deformation} below.
\end{proof}

Finally, we complete the theoretical analysis of the situation where $\Omega$ is the $N$-dimensional Euclidean unit ball and $\beta=0$ by showing that a radially symmetric set (and in particular the centered ball) cannot solve Problem \eqref{mini} for $N=2,3,4$. The next result is the most involved of this section. Our argument rests upon a particular rearrangement technique that breaks the radial symmetry in the disk and decreases the Rayleigh quotient $\Re_m$ defined by \eqref{def:Re}.
\begin{figure}[!ht]
\begin{center}
\includegraphics[height=5cm]{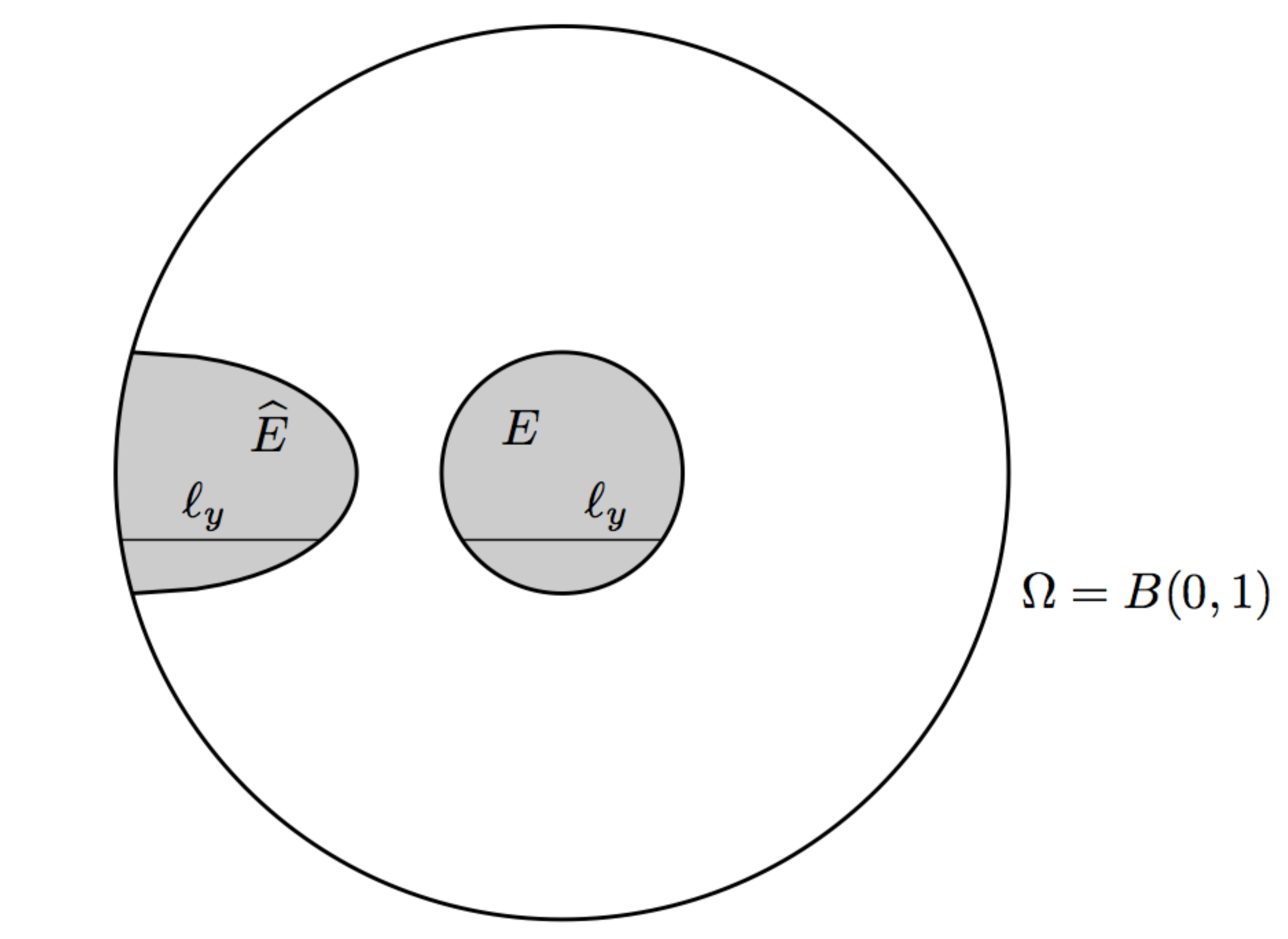}
\caption{Construction of the set $\widehat{E}$ from $E$\label{fig:hatE0906}}
\end{center}
\end{figure}

For the following theorem  we introduce the variable $x' = (x_2,...,x_N)$, the plane $P':= \{x_1=0\}$ and the ball 
$$\Om' :=\left\{ x'\in \R^{N-1} :\sum_{i=2}^N x_i^2 \leq 1\right\}$$ 
of dimension $N-1$.
\begin{theorem}\label{th:deformation}
Assume $N\geq 2$, $\Om= B(0,1)$, $\beta=0$, $\kappa\in(0,\infty)$ and $c\in(0,\frac{1}{\kappa+1})$. Let $E$ be a radially symmetric set, centered at $0$. Then there exists a set $\widehat{E}$ such that
\begin{equation}\label{ineq_stretch_lambda}
\lambda(\widehat{E})<\left(\frac{5N-4}{4N}\right)\lambda(E),
\end{equation}
and moreover: 
\begin{itemize}
\item $\partial \Om\cap \partial\widehat E\neq \emptyset$, 
\item $\widehat E$ and $\Omega\setminus \widehat E$ are both convex in the direction $x_1$, i.e. $\widehat E\cap\mathcal D_y$ and $(\Omega\setminus \widehat E)\cap\mathcal D_y$ are intervals, where $\mathcal D_y$ is the line passing through $(0,x')$, $x'\in\Om'$ and parallel to the $x_1$-axis,
\item the set $\widehat E$ is symmetric with respect to every hyperplane $P_i =\{x_i = 0\}$ for $i\geq 2$ but is not symmetric with respect to $P'$.
\end{itemize}
\end{theorem}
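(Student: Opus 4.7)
My plan is to prove the theorem by exhibiting an explicit ``half-ball'' rearrangement of $E$ together with a matched test function, then estimating the Rayleigh quotient. The central observation is that, by Proposition~\ref{prop:geomprop}, the principal eigenfunction $\varphi$ associated with $\lambda(E)$ is radial: writing $\varphi(x) = U(|x|)$, the Neumann condition gives $U'(1) = 0$. I introduce the $C^{\infty}$ diffeomorphism
\[
T : \Omega \longrightarrow B^+ := \{z \in \Omega : z_1 \geq 0\}, \qquad T(x_1, x') = \left(\frac{L(x') - x_1}{2},\, x'\right),
\]
with $L(x') := \sqrt{1 - |x'|^2}$, whose Jacobian has constant absolute value $1/2$ so that $dx = 2\, dz$. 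The candidate set is $\widehat E := T^{-1}(E \cap B^+)$, with matched test function $\widehat\varphi(x) := U(|T(x)|)$. From the explicit form $T^{-1}(u, x') = (L(x') - 2u,\, x')$ and the radial structure of $E$, one reads off all the geometric properties claimed in the theorem (symmetry of $\widehat E$ under $x_i \mapsto -x_i$ for $i \geq 2$, tangency of $\partial \widehat E$ with $\partial \Omega$ at $e_1 = (1, 0, \ldots, 0)$, asymmetry in the $x_1$-direction, and $x_1$-convexity of $\widehat E$ and of $\Omega \setminus \widehat E$; the latter is immediate when the radial profile of $E$ is a single interval, and obtained in general by a further chordwise monotone rearrangement that can only improve the Rayleigh estimate).

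Next, a change of variable combined with the radial symmetry of $m_E$ and $\varphi$ yields the mass identity
\[
\int_\Omega m_{\widehat E}\, \widehat \varphi^2\, dx \;=\; 2\int_{B^+} m_E(z)\, U(|z|)^2\, dz \;=\; \int_\Omega m_E\, \varphi^2\, dx,
\]
so the proof reduces to bounding $\int_\Omega |\nabla \widehat\varphi|^2$ from above. Setting $(u, x') := T(x)$ and $y := \sqrt{u^2 + |x'|^2}$, a direct chain-rule computation (using $\partial_{x_1} u = -\tfrac{1}{2}$ and $\partial_{x_i} u = -x_i/(2L)$ for $i \geq 2$) produces the pointwise identities
\[
(\partial_{x_1} \widehat\varphi)^2 \;=\; \frac{U'(y)^2 u^2}{4 y^2}, \qquad |\nabla_{x'} \widehat\varphi|^2 \;=\; \frac{U'(y)^2 |x'|^2}{y^2}\left(1 - \frac{u}{2L}\right)^{\!2}.
\]
The appearance of a complete square in the second identity is the crucial algebraic point: the three terms produced by the chain rule recombine as $1 - u/L + u^2/(4L^2) = \bigl(1 - u/(2L)\bigr)^2$, a quantity bounded by $1$ since $u \in [0, L]$.

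From here the integration is essentially mechanical. Doubling the $B^+$-integrals to $B$ by the $u \mapsto -u$ symmetry of the integrands, and then invoking the coordinate symmetries of the ball together with the radial identity $\int_\Omega (\partial_{x_1}\varphi)^2\, dx = \tfrac{1}{N}\int_\Omega |\nabla \varphi|^2\, dx$, one obtains $\int_\Omega (\partial_{x_1}\widehat\varphi)^2\, dx = \tfrac{1}{4N}\int_\Omega |\nabla \varphi|^2\, dx$. Using $(1 - u/(2L))^2 \leq 1$ then gives
\[
\int_\Omega |\nabla_{x'}\widehat\varphi|^2\, dx \;\leq\; \int_\Omega |\nabla_{x'}\varphi|^2\, dx \;=\; \frac{N-1}{N}\int_\Omega |\nabla \varphi|^2\, dx.
\]
Summing the two contributions yields $\int_\Omega |\nabla \widehat\varphi|^2\, dx \leq \tfrac{4N-3}{4N}\int_\Omega |\nabla \varphi|^2\, dx$, which is strictly smaller than $\tfrac{5N-4}{4N}\int_\Omega |\nabla \varphi|^2\, dx$ for every $N \geq 2$. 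Plugging $\widehat\varphi$ into the Rayleigh characterization of $\lambda(\widehat E)$ and using the mass identity then delivers \eqref{ineq_stretch_lambda}. The main obstacle I anticipate is justifying the completed-square factorization of $|\nabla_{x'}\widehat\varphi|^2$: without this algebraic coincidence, the $L(x')$ factors appearing in the chain rule would force a delicate nonlocal analysis of the transverse integrals, whereas the squared form reduces the transverse bound to an elementary pointwise inequality.
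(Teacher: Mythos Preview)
Your construction is the paper's own stretching map (up to an irrelevant $x_1\mapsto -x_1$ reflection): the paper defines $\widehat\varphi(x_1,x')=\varphi\bigl(\tfrac{x_1+f(x')}{2},x'\bigr)$ with $f=L$, obtains the same mass identity, and then estimates the transverse terms. The difference is in how the transverse gradient is bounded. The paper expands $\bigl(\tfrac{\partial_{x_i}f}{2}\,\partial_{z_1}\varphi+\partial_{z_i}\varphi\bigr)^2$, argues from radiality that the cross term is nonpositive so that this is $\le \bigl(\tfrac{\partial_{x_i}f}{2}\,\partial_{z_1}\varphi\bigr)^2+(\partial_{z_i}\varphi)^2$, and then controls the first piece via a Wallis--integral computation yielding $K_i<\|\partial_{x_1}\varphi\|_{L^2}^2$; summing produces the constant $\tfrac{5N-4}{4N}$. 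You instead write both terms through $U'(y)$ and observe that they recombine into the single factor $U'(y)\,\tfrac{x_i}{y}\bigl(1-\tfrac{u}{2L}\bigr)$, so that the square is pointwise $\le (\partial_{z_i}\varphi)^2$; this bypasses the Wallis machinery entirely and gives $\int_\Om|\nabla\widehat\varphi|^2\le \tfrac{4N-3}{4N}\int_\Om|\nabla\varphi|^2$. Your route is strictly sharper: since $\tfrac{4N-3}{4N}<1$ for every $N\ge 2$, your argument in fact removes the dimensional restriction in Theorem~\ref{thm3_simple} and shows that the centered ball is never optimal under Neumann conditions, whereas the paper's constant exceeds $1$ once $N\ge 5$. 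One point deserving a word of caution is the $x_1$--convexity of $\Omega\setminus\widehat E$ when $E$ is a general radial set (e.g.\ a union of annuli): the chordwise monotone rearrangement you invoke does control the $x_1$--derivative by P\'olya's inequality, but its effect on the transverse derivatives is not automatic. The paper itself only verifies these convexity properties in the centered-ball case, so this is not a discrepancy with the statement as actually used.
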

Notice that the constant $(5N-4)/4N$ is positive for all $N\geq 2$.
\begin{proof}
Defining $f:x'\in\Om'\mapsto f(x')=\sqrt{1-\sum_{i=2}^N x_i^2}$ we have
\begin{equation}\label{omega_graph}
\Om = B(0,1) = \{ (x_1,x'): x'\in \Om'\mbox{ and } -f(x')< x_1 < f(x')  \} .
\end{equation}
As $E$ is symmetric with respect to $P'$,
we consider the following transformations of $m$, $\varphi$ and $E$: 
$$\widehat m(x_1,x'):= m\left( \frac{x_1 + f(x')}{2},x' \right),\qquad \widehat \varphi(x_1,x'):= \varphi\left( \frac{x_1 + f(x')}{2},x' \right),\qquad \widehat E = \{ \widehat m =\kappa\}.$$ 
This transformation corresponds to considering the restrictions of $m$ and $\varphi$ to the right half  of $\Omega$, and then ``stretching'' these restrictions to all of $\Om$ (see Figure \ref{fig:hatE0906}). 

We show, using that $E$ is radially symmetric, that 
this transformation decreases the eigenvalue. 
We start by proving  that the transformation preserves the constraints of the problem. We have
\begin{align*}
\int_{\Omega} \widehat m \widehat\varphi^2 
& = \int_{\Om'} \int_{-f(x')}^{f(x')}  \widehat m(x_1,x') \widehat\varphi(x_1,x')^2 dx_1 dx'\\
& = \int_{\Om'} \int_{-f(x')}^{f(x')}  m\left( \frac{x_1 + f(x')}{2},x' \right) \varphi\left( \frac{x_1 + f(x')}{2},x' \right)^2 dx_1 dx' \\
& = \int_{\Om'} \int_{0}^{f(x')}  m\left( \widehat x,x' \right) \varphi\left( \widehat x,x' \right)^2 2d\widehat x dx'\\
& = \int_{\Om'} \int_{-f(x')}^{f(x')}  m\left( \widehat x,x' \right) \varphi\left( \widehat x,x' \right)^2 d\widehat x dx' 
=\int_{\Omega}  m \varphi^2.
\end{align*}
where we have used the change of variable $\widehat x_1 = (x_1 + f(x'))/2$ and the symmetry of $m$ and $\varphi$ with respect to the hyperplane $P'$, which follows from proposition \ref{prop:geomprop}. As a consequence, one has $\int_{\Omega} \widehat m \widehat\varphi^2>0$. With a similar calculation we can prove that $|\widehat E| = |E|$.

Now we prove that the $L^2$-norm of the gradient decreases. We have
\begin{align*}
\int_{\Omega} | \nabla\widehat\varphi |^2 
& = \int_{\Om'} \int_{-f(x')}^{f(x')}  \frac{1}{4} \left| \partial_{x_1}\varphi\left(  \frac{x_1 + f(x')}{2},x' \right) \right|^2 \\
&
\qquad +
\left|   \partial_{x_1}\varphi\left(  \frac{x_1 + f(x')}{2},y \right) \frac{\nabla_{x'} f(x')}{2}  + \nabla_{x'}\varphi\left(  \frac{x_1 + f(x')}{2},x' \right) \right|^2    dx_1dx'\\
& = 2\int_{\Om'} \int_{0}^{f(x')}  \frac{1}{4} \left| \partial_{x_1}\varphi\left( \widehat x_1,x' \right) \right|^2 
+
\left|   \partial_{x_1}\varphi\left(  \widehat x_1,y \right) \frac{\nabla_{x'} f(x')}{2}  + \nabla_{x'}\varphi\left(  \widehat x_1 ,x' \right) \right|^2    d\widehat x_1dx'\\
& = \int_{\Om'} \int_{-f(x')}^{f(x')}  \frac{1}{4} \left| \partial_{x_1}\varphi\left( \widehat x_1,x' \right) \right|^2 
+
\left|   \partial_{x_1}\varphi\left(  \widehat x_1,y \right) \frac{\nabla_{x'} f(x')}{2}  + \nabla_{x'}\varphi\left(  \widehat x_1 ,x' \right) \right|^2    d\widehat x_1dx'.
\end{align*}
As $\Om = B(0,1)$ and $E$ is radially symmetric, the level sets of $\varphi$ are also radially symmetric according to Proposition \ref{prop:geomprop}. Let us first compare the signs of $\partial_{x_1}\varphi\left( x_1,x' \right) $ and $\partial_{x_i}\varphi\left( x_1,x' \right) $ for $x_1\geq 0$ and $x_i\geq 0$ for some $i\geq 2$.

First of all note that $\varphi$, in hyperspherical coordinates, is $\mathcal C^1$ with respect to the variable $r$, this can be seen by writing the equation for $\varphi$ in these coordinates.
On one hand if  there exists $\widehat h_{x_1}>0$ such that $\varphi(x_1+h,x')\geq \varphi(x_1,x')$ for all $h\in [0,\widehat h_{x_1}]$ then
$$ \partial_{x_1}\varphi\left( x_1,x' \right)  = \lim_{h\to 0} \frac{\varphi(x_1+h,x') -\varphi(x_1,x') }{h}\geq 0$$
and due to the radial symmetry of the level sets of $\varphi$ and $x_i\geq 0$, there exists also $\widehat h_{x_i}>0$ such that $\varphi(x_1,..,x_{i-1},x_i+h,x_{i+1},..,x_N)\geq \varphi(x_1,x')$ for all $h\in [0,\widehat h_{x_i}]$, and consequently $\partial_{x_i}\varphi\left( x_1,x' \right)\geq 0$.
On the other hand if  there exists $\widehat h_{x_1}$ such that $\varphi(x_1+h,x')\leq \varphi(x_1,x')$ for all $h\in [0,\widehat h_{x_1}]$, then we similarly get $ \partial_{x_1}\varphi\left( x_1,x' \right)  \leq 0$ and $\partial_{x_i}\varphi\left( x_1,x' \right)\leq 0$.

If neither of these two situations do happen, then there exists a sequence $h_k\to 0$, $h_k>0$ such that 
$$\varphi(x_1+h_{2k},x')\geq \varphi(x_1,x')\mbox{ and }\varphi(x_1+h_{2k+1},x')\leq \varphi(x_1,x').$$
Passing to the limit in the differential quotient for the two subsequences $h_{2k}$ and $h_{2k+1}$  we get $ \partial_{x_1}\varphi\left( x_1,x' \right)  \geq 0$ and $ \partial_{x_1}\varphi\left( x_1,x' \right)  \leq 0$  and consequently  $ \partial_{x_1}\varphi\left( x_1,x' \right)  = 0$.

Therefore, as $\partial_{x_i} f(x')\leq 0$ for $x_i\geq 0$, the two terms $\partial_{x_i} f(x') \partial_{x_1}\varphi( x_1,x' )$ and $   \partial_{x_i}\varphi\left(  x_1,x' \right)$ always have opposite signs for  $0\leq x_1\leq f(x')$ and $0\leq x_i$
(this includes the case $\partial_{x_i} f(x') \partial_{x_1}\varphi( x_1,x' )=0$ as a limit case).
Due to the symmetries of $\Om$, $\partial_{x_i} f(x') \partial_{x_1}\varphi( x_1,x' )$ and $   \partial_{x_i}\varphi\left(  x_1,x' \right)$ always have opposite signs. 
Therefore we have
$$
\left| \frac{\partial_{x_i} f(x')}{2} \partial_{x_1}\varphi( x_1,x' )+  \partial_{x_i}\varphi (  x_1,x' ) \right|^2 
\leq 
\left|  \frac{\partial_{x_i} f(x')}{2} \partial_{x_1}\varphi( x_1,x' )\right|^2
+\left|  \partial_{x_i}\varphi (  x_1,x' )  \right|^2 .
$$
This yields the estimate
\begin{align*}
\int_{\Omega} | \nabla\widehat\varphi |^2 &   = \int_{\Om'} \int_{-f(x')}^{f(x')}  \frac{1}{4} \left| \partial_{x_1}\varphi\left(  x_1,x' \right) \right|^2 
+
\left|   \partial_{x_1}\varphi\left(  x_1,y \right) \frac{\nabla f(x')}{2}  + \nabla_{x'}\varphi\left( x_1 ,x' \right) \right|^2   dx_1dx'\\
& \leq \int_{\Om'} \int_{-f(x')}^{f(x')}  
\frac{1}{4} \left| \partial_{x_1}\varphi\left(  x_1,x' \right) \right|^2 
+\sum_{i\geq 2} \Big(
 \left|  \frac{\partial_{x_i} f(x')}{2} \partial_{x_1}\varphi( x_1,x' )\right|^2
 +\left|  \partial_{x_i}\varphi (  x_1,x' )  \right|^2  \Big) dx_1dx'.
\end{align*}
To continue with the main estimate, let us write down the hyperspherical coordinates in dimension $N$. Let $\theta_k\in [0,\pi]$ for $1\leq k\leq N-2$ and $\theta_{N-1}\in [0,2\pi]$. The relation with Cartesian coordinates is given by
\begin{align*}
x_1 = r\cos\theta_1, \qquad
x_i  = r\cos\theta_i \prod\limits_{k=1}^{i-1} \sin \theta_k\quad \mbox{ for } 2\leq i\leq N-1,\qquad
x_N = r \prod\limits_{k=1}^{N-1} \sin \theta_k  
\end{align*}
and $r =|x|$. 
A simple calculation shows that $|\partial_{x_i} f(x')|^2 = f(x')^{-2}|x_i|^2$ for $i\geq 2$.
Using hyperspherical coordinates we get
\begin{align*}
f(x')^{2} = 1 - \sum_{k= 2 }^{N} x_k^2 = 1 - \sum_{k= 2 }^{N-2} x_k^2  - r^2 \prod_{k=1}^{N-2}(\sin\theta_k)^2
= ... = 1-r^2\sin^2\theta_1.
\end{align*}
Since we have the radial symmetry we define $U(r) := \varphi ( x_1,x')$. 
We have then $\partial_{x_1}\varphi ( x_1,x') = U'(r)\cos\theta_1$. 
Introduce for $i\geq 2$ the integral
\begin{align*}
K_i: &= \int_{\Om'} \int_{-f(x')}^{f(x')} \left|  \partial_{x_i} f(x') \partial_{x_1}\varphi( x_1,x' )\right|^2  dx_1dx'  \\
& = \int_{\theta_{N-1}=0}^{2\pi} \int_{\theta_{N-2}=0}^{\pi} ... \int_{\theta_{1}=0}^{\pi} \int_{r=0}^{1}   \frac{r^2\cos^2\theta_i \prod_{k=1}^{i-1} \sin^2 \theta_k}{1-r^2\sin^2\theta_1} U'(r)^2  \cos^2\theta_1  r^{N-1}  \prod_{k=1}^{N-2} (\sin \theta_k)^{N-k-1}  drd\theta_1...d\theta_{N-1} . 
\end{align*}
Clearly the function
$$ 
[0,1]\ni r\mapsto \frac{r^2}{1-r^2\sin^2\theta_1}. 
$$
is increasing, for $\theta_1\in (0 ,\pi)$ fixed. Hence, one has
$$ \frac{r^2}{1-r^2\sin^2\theta_1}  
<   \frac{1}{1-\sin^2\theta_1} = \frac{1}{\cos^2\theta_1}\quad \mbox{ for } r < 1,$$
and rearranging the other terms in $K_i$ leads to 
$$K_i < \int_{\theta_{N-1}=0}^{2\pi} \int_{\theta_{N-2}=0}^{\pi} ... \int_{\theta_{1}=0}^{\pi} \int_{r=0}^{1}   r^{N-1}U'(r)^2  \cos^2 
\theta_i \left(\prod_{k=1}^{i-1} (\sin \theta_k)^{N-k+1}  \right) 
\left( \prod_{k=i}^{N-2} (\sin \theta_k)^{N-k-1}\right)  drd\theta_1...d\theta_{N-1} .
$$
To estimate the integral $K_i$, we can compute the various integrals above separately. We start with
\begin{align*}
\prod_{k=1}^{i-1} \int_{\theta_k =0}^{\pi}(\sin \theta_k)^{N-k+1} d\theta_k =  \prod_{k=1}^{i-1} 2W_{N-k+1}
\end{align*}
where $W_{N-k+1}$ denotes the $N-k+1$-th Wallis integral\footnote{The Wallis integrals are the terms of the sequence 
$(W_n)_{n \in \mathds{N}}$ defined by
$$
    W_n = \int_0^{\frac{\pi}{2}} \sin^nx\,dx.
    $$
    }. Using the well-known relation of Wallis integrals for $q\in\mathds{N}^*$
\begin{equation}
\label{wallis_relation}
 qW_q = (q-1)W_{q-2} 
\end{equation}
we obtain
\begin{align}\label{wallis1}
\prod_{k=1}^{i-1} \int_{\theta_k =0}^{\pi}(\sin \theta_k)^{N-k+1} d\theta_k = 2^{i-1} \prod_{k=1}^{i-1} W_{N-k-1}\frac{N-k}{N-k+1} = 2^{i-1} \frac{N-i+1}{N} \prod_{k=1}^{i-1} W_{N-k-1}.
\end{align}
Pursuing the estimate of $K_i$, we study the term 
$$\int_{0}^{\pi} \cos^2\theta_i(\sin \theta_i)^{N-i-1} d\theta_i 
= \int_{0}^{\pi} (\sin \theta_i)^{N-i-1} d\theta_i
- \int_{0}^{\pi} (\sin \theta_i)^{N-i+1} d\theta_i
=2W_{N-i-1} - 2W_{N-i+1}.
$$
Using relation \eqref{wallis_relation} we get
\begin{equation}
\label{wallis2}
\int_{0}^{\pi} \cos^2\theta_i(\sin \theta_i)^{N-i-1} d\theta_i 
=\frac{2}{N-i+1}W_{N-i-1} .
\end{equation}
Gathering \eqref{wallis1} and \eqref{wallis2} we obtain
$$K_i\leq 2\pi \left(\int_{r=0}^{1}   r^{N-1}U'(r)^2 dr\right) 
\frac{2^{N-2}}{N}\prod_{k=1}^{N-2}W_{N-k-1}.
$$
On the other hand we have
\begin{align*}
\| \partial_{x_1}\varphi \|^2_{L^2(\Omega)}
& =  \int_{\theta_{N-1}=0}^{2\pi} \int_{\theta_{N-2}=0}^{\pi} ... \int_{\theta_{1}=0}^{\pi} \int_{r=0}^{1}  
  U'(r)^2  \cos^2\theta_1  r^{N-1}  \prod_{k=1}^{N-2} (\sin \theta_k)^{N-k-1}  drd\theta_1...d\theta_{N-1}  \\
& = 2\pi\left(\int_{r=0}^{1} 
  U'(r)^2  r^{N-1} dr\right)
  \prod_{k=2}^{N-2} 2W_{N-k-1} \int_{0}^\pi \cos^2\theta_1 (\sin\theta_1)^{N-2}  d\theta_1\\
& = 2\pi\left(\int_{r=0}^{1} 
  U'(r)^2  r^{N-1} dr\right)
  (2 W_{N-2} - 2 W_{N})\prod_{k=2}^{N-2} 2W_{N-k-1}. \\  
\end{align*}
Using \eqref{wallis_relation} we get $2 W_{N-2} - 2 W_{N} = 2N^{-1} W_{N-2}$ which yields
\begin{align*}
\| \partial_{x_1}\varphi \|^2_{L^2(\Omega)} & = 2\pi\left(\int_{r=0}^{1} 
  U'(r)^2   r^{N-1} dr\right)
 \frac{2^{N-2}}{N}\prod_{k=1}^{N-2} W_{N-k-1}. 
\end{align*}
Thus gathering the results above we have obtained the estimate
$$ K_i < \| \partial_{x_1}\varphi \|^2_{L^2(\Omega)}.$$
Now using this estimate yields
\begin{align*}
\int_{\Omega} | \nabla\widehat\varphi |^2 
< 
\frac{1}{4}  \| \partial_{x_1}\varphi \|^2_{L^2(\Omega)}
+\sum_{i\geq 2}\Big(
\frac{1}{4}  \| \partial_{x_1}\varphi \|^2_{L^2(\Omega)}  +   \| \partial_{x_i}\varphi \|^2_{L^2(\Omega)} \Big).
\end{align*}
Now we observe that  $\| \partial_{x_{i_1}}\varphi \|_{L^2(\Omega)} =  \| \partial_{x_{i_2}}\varphi \|_{L^2(\Omega)}$ for all indices $i_1$ and $i_2$ due to the radial symmetry of $\varphi$. 

Thus we get
\begin{align*}
\int_{\Omega} | \nabla\widehat\varphi |^2 
& <
\frac{1}{4}  \| \partial_{x_1}\varphi \|^2_{L^2(\Omega)}
+(N-1)(1+\frac{1}{4})
 \| \partial_{x_1}\varphi \|^2_{L^2(\Omega)} 
 = \frac{5N-4}{4}\| \partial_{x_1}\varphi \|^2_{L^2(\Omega)}\\
& < \frac{5N-4}{4N} \| \nabla\varphi \|^2_{L^2(\Omega)} 
\end{align*}
and the expected conclusion follows.

Finally, if $E$ is also a centered ball,  then denoting $E_R$ the part of $E$ which is on the right of the plane $P'$, $E_R$ is a half-ball and is convex. The transformation $E_R\mapsto\widehat E $ obviously preserves the convexity in the $x_1$-direction since intervals are mapped onto intervals for fixed $x'\in \Om'$.   
Thus it is clear that $\widehat E$ and $\Om\setminus\widehat E$ are both convex in the $x_1$-direction.
\end{proof}

\begin{remark}
In the proof of Theorem \ref{th:deformation}, there is some room to improve the estimate for $K_i$, and in turn the estimate for the eigenvalue, using the estimate
$$ \frac{r^2}{1-r^2\sin^2\theta_1}  
<   \frac{r^2}{1-\sin^2\theta_1}\quad \mbox{ for } r < 1,\theta_1\in (0,\pi).$$ 
However, to obtain a practical estimate, one needs more informations about the spatial distribution of $U'(r)^2$.
\end{remark}


The numerical results for the disk in the Neumann case are gathered on Figure \ref{fig:disk1900} and two convergence curves illustrating the efficiency of the method are drawn on Figure \ref{fig:CVcurveneudisk}.

\begin{figure}[h!]
\centering
\subfigure[$c=0.2$ - optimal domain]{\includegraphics[scale=0.35]{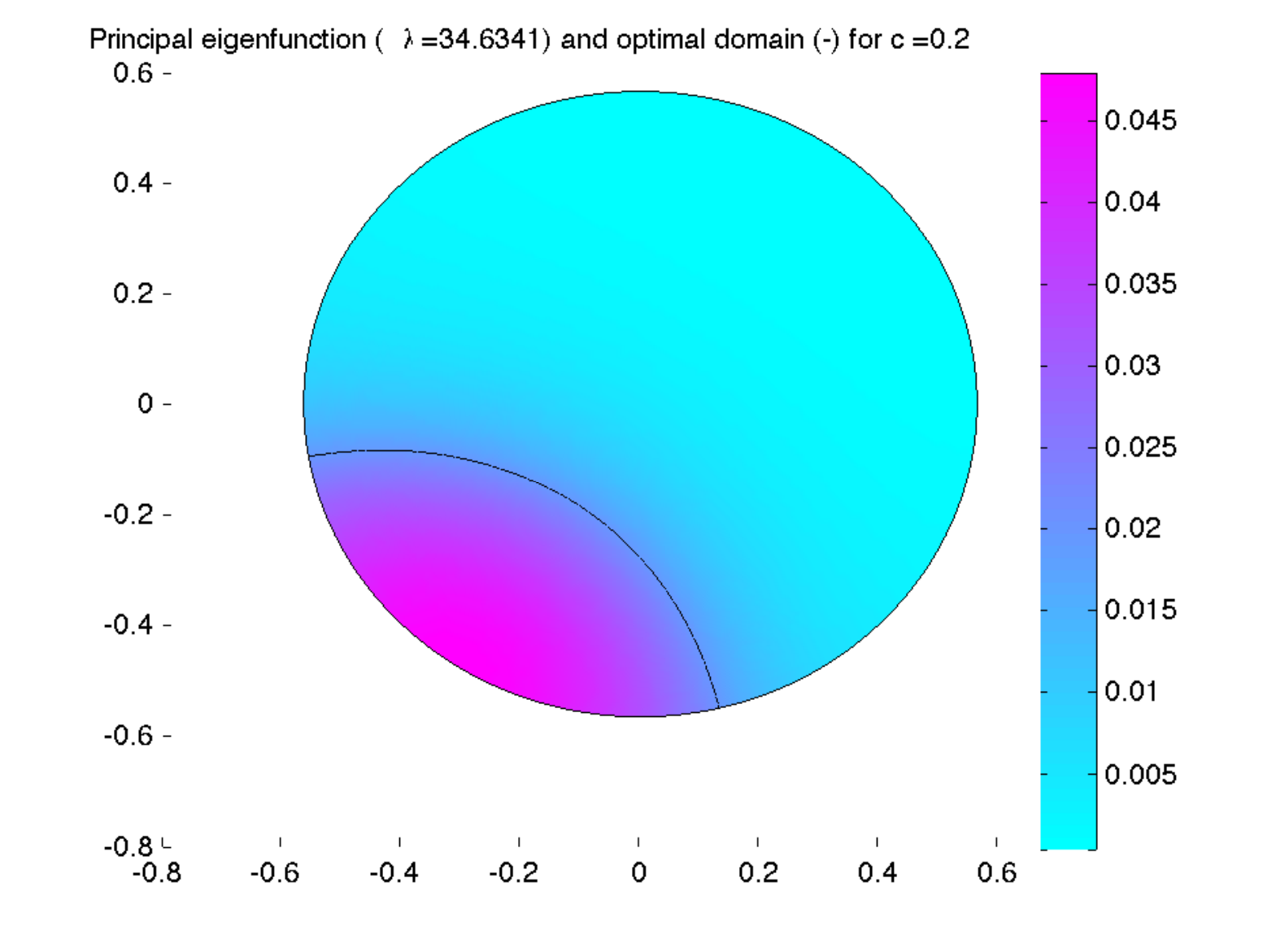}}
\subfigure[$c=0.3$ - optimal domain]{\includegraphics[scale=0.35]{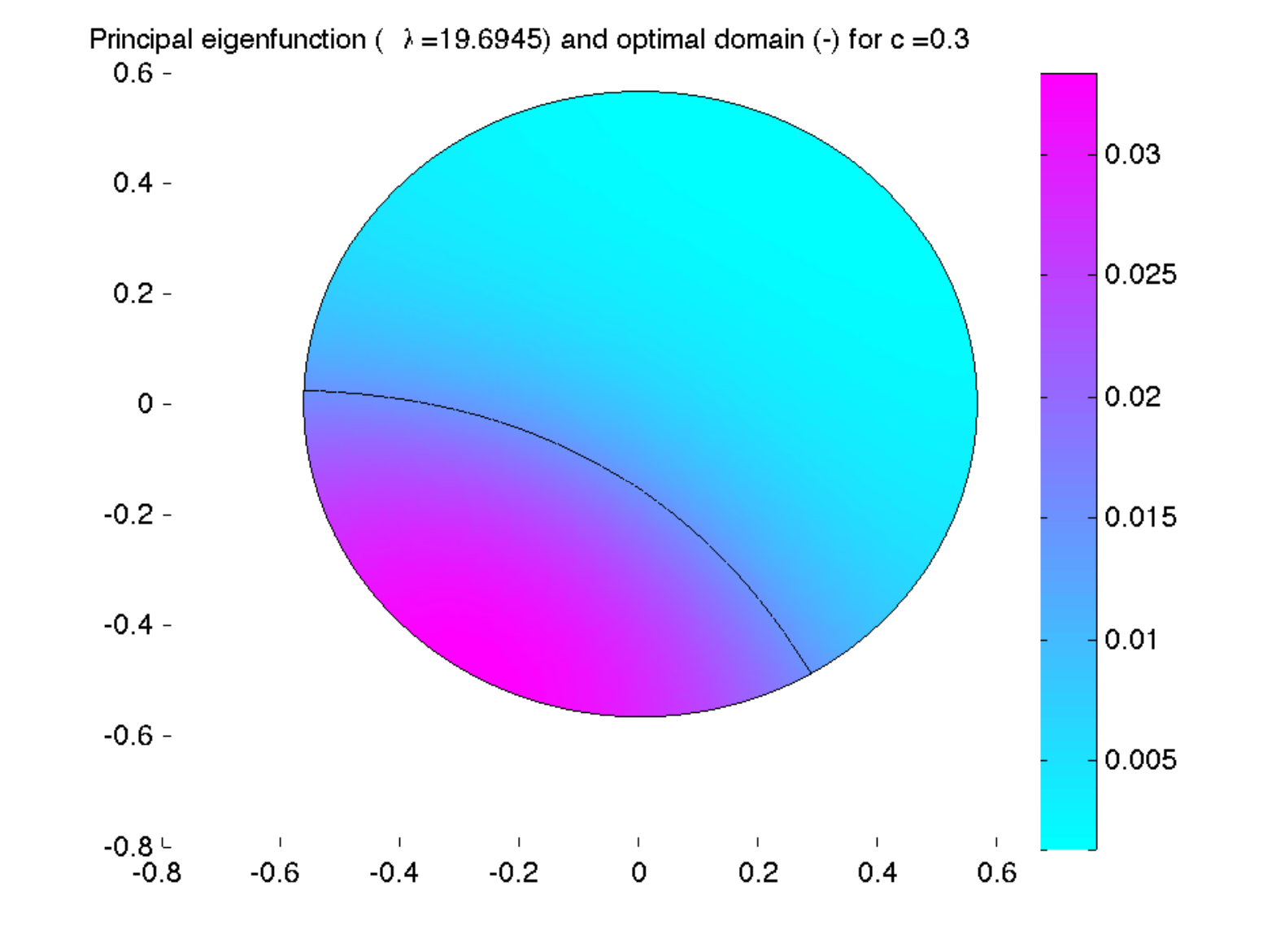}}
\subfigure[$c=0.4$ - optimal domain]{\includegraphics[scale=0.35]{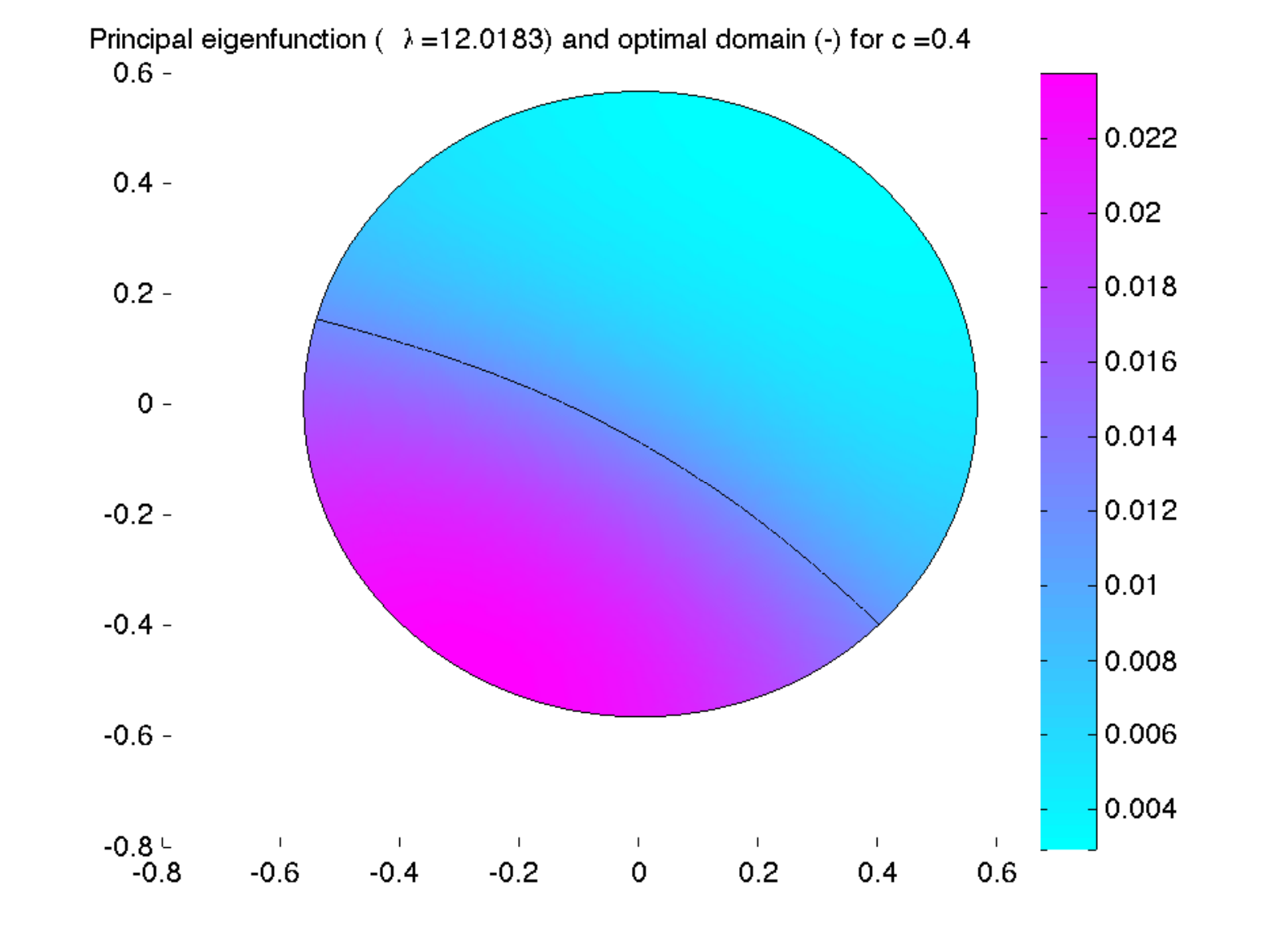}}
\subfigure[$c=0.5$ - optimal domain]{\includegraphics[scale=0.35]{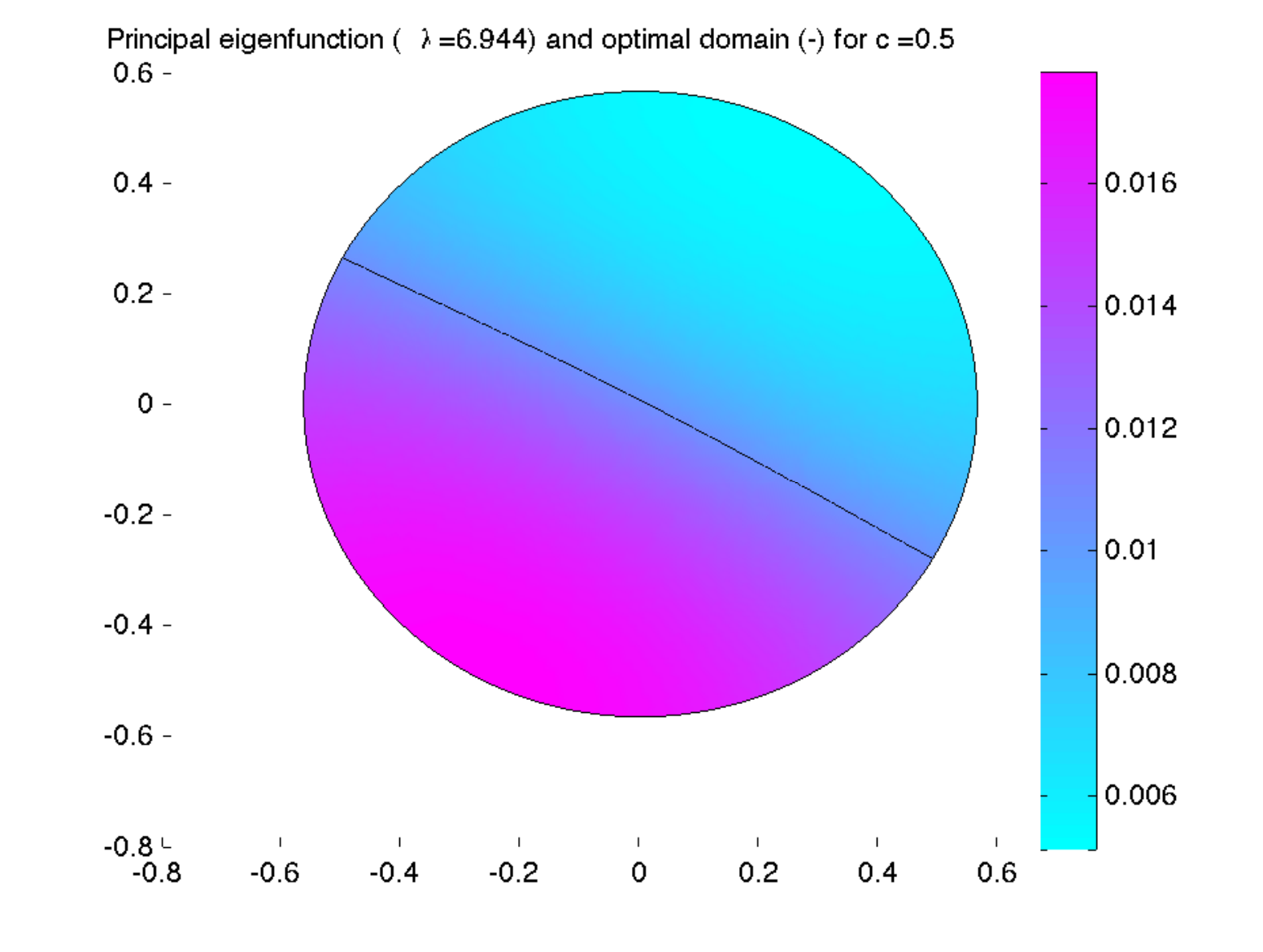}}
\subfigure[$c=0.6$ - optimal domain]{\includegraphics[scale=0.35]{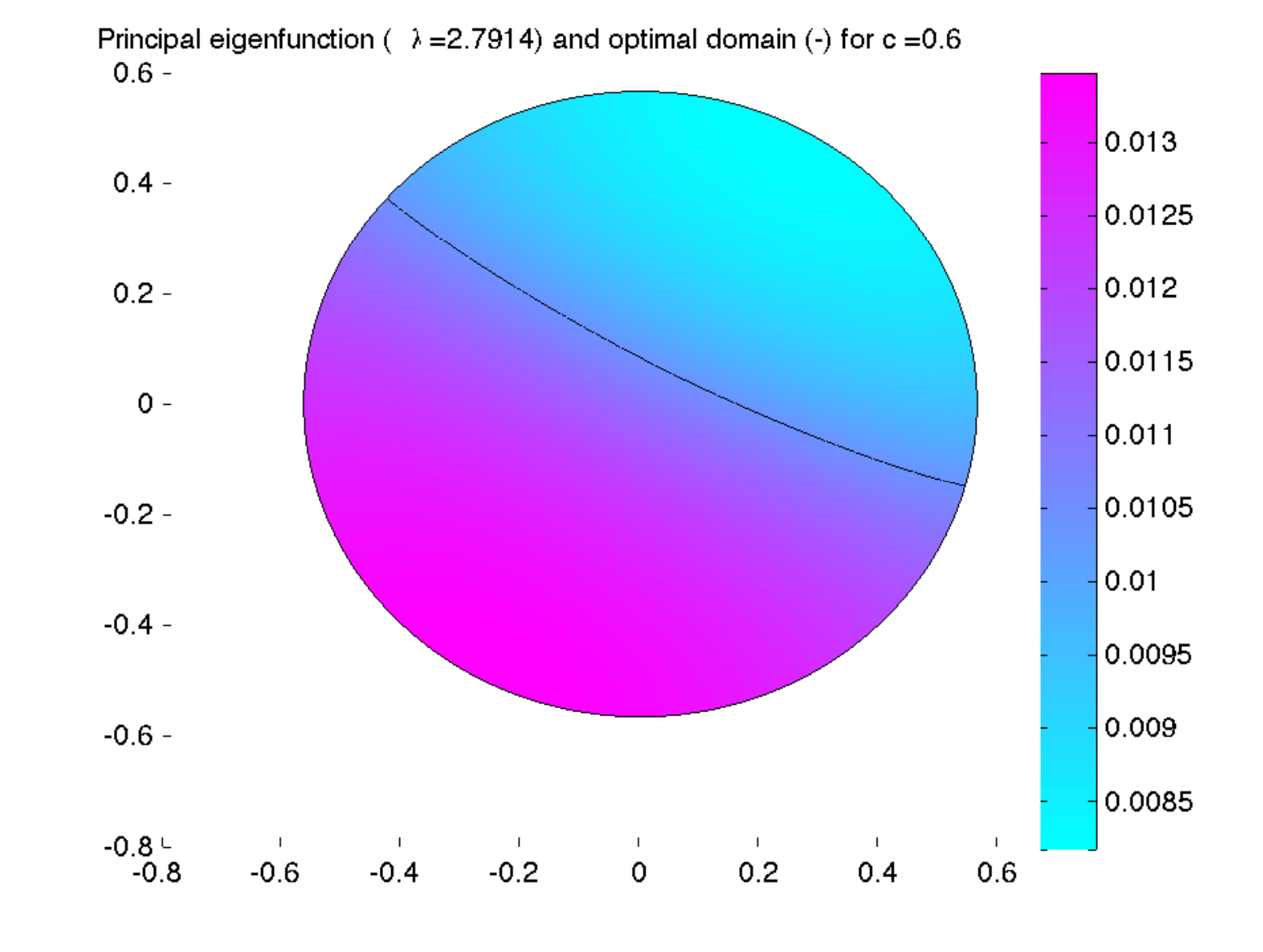}}
\caption{$\Omega=B(0,1/\sqrt{\pi})$. Optimal domains in the Neumann case ($\beta=0$) with $\kappa=0.5$ and $c\in \{0.2,0.3,0.4,0.5,0.6\}$} \label{fig:disk1900}
\end{figure}

\begin{figure}[h!]
\centering
\subfigure[$c=0.2$ - convergence curve]{\includegraphics[scale=0.35]{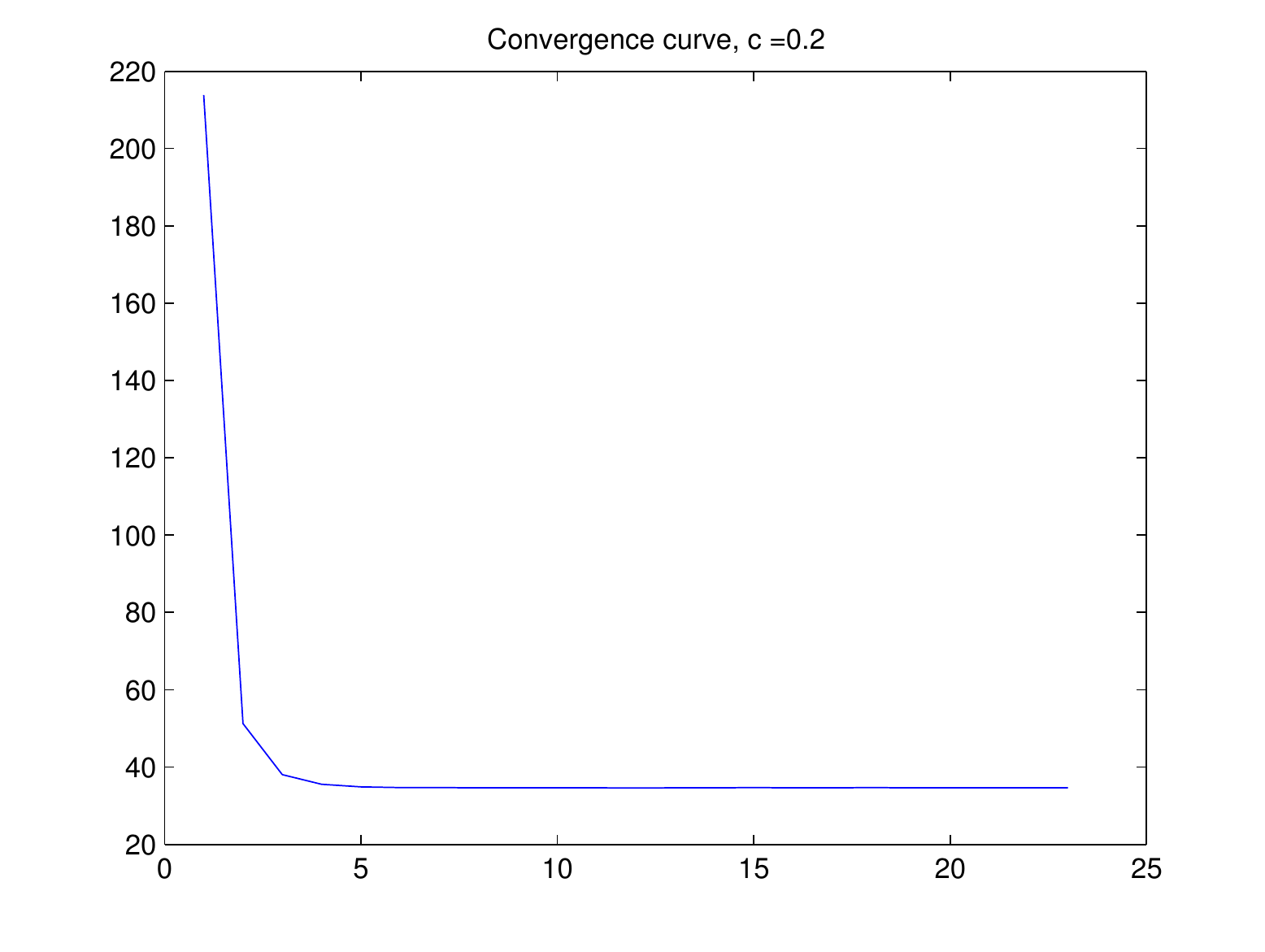}}
\subfigure[$c=0.6$ - convergence curve]{\includegraphics[scale=0.35]{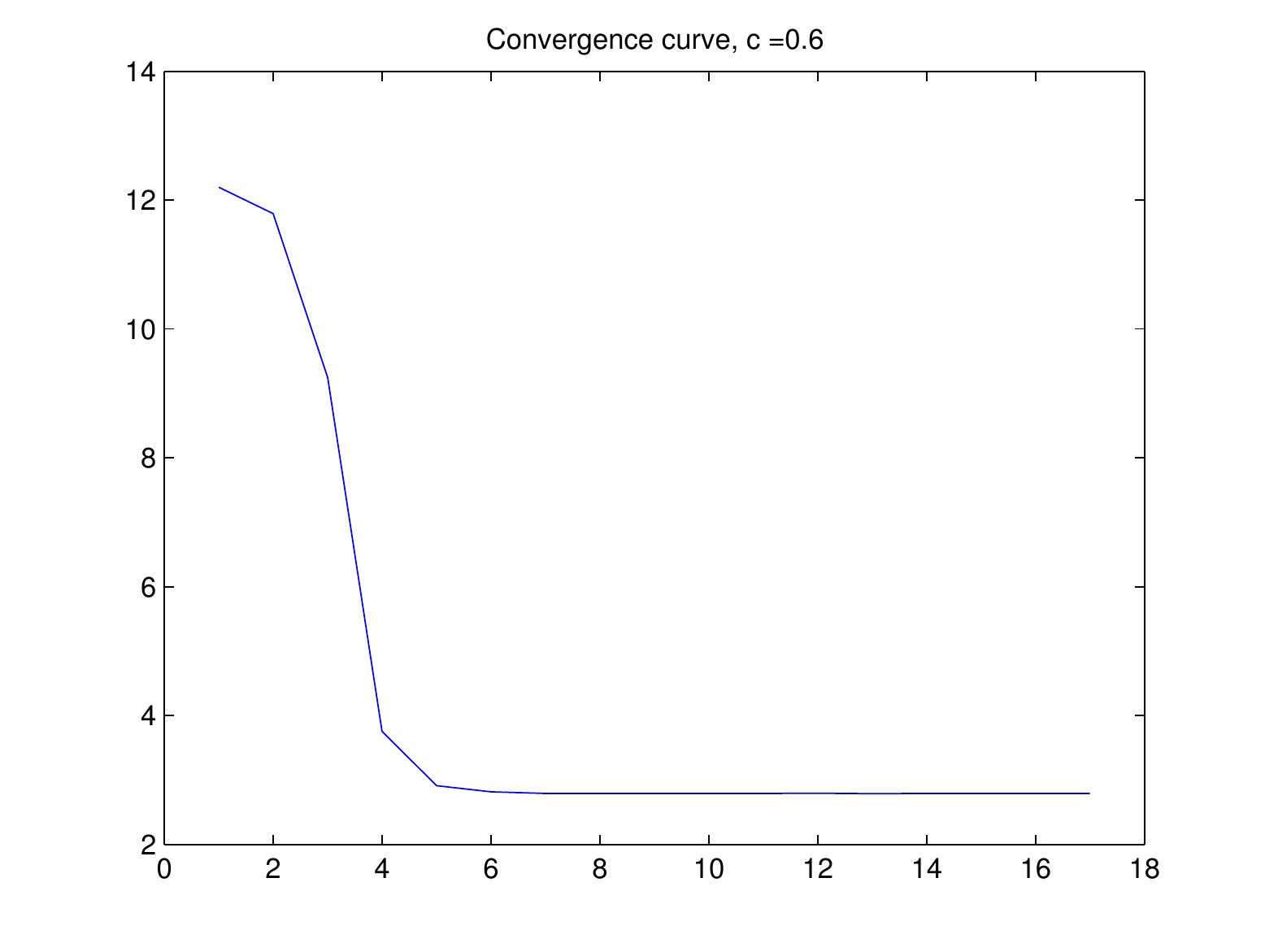}}
\caption{$\Omega=B(0,1/\sqrt{\pi})$. Two examples of convergence curves in the Neumann case ($\beta=0$) with $\kappa=0.5$ and $c=0.2$ (left) or $c=0.6$ (right)\label{fig:CVcurveneudisk}} 
\end{figure}

According to these simulations, the optimal set $E^*$ looks like a portion of disk intersecting $\Om$, but we did not manage to confirm nor to invalidate this observation theoretically.

To end this section, let us provide some numerical hints suggesting that  the optimal set $E^*$ are not portions of disks. 
Assume from now on that $\Om$ is a disk of radius $R$ (and $R=1/\sqrt{\pi}$ on Figure \ref{fig:disk1900} so that $|\Om|=1$). We expect from the Neumann boundary conditions that the boundary $\partial E\cap \Om$ will hit $\partial \Om$ with angle $\pi/2$. It follows from Pythagora's theorem that the distance between the center of $\Om$ and the center of $E$ is $\sqrt{R^2+r_c^2}$. Therefore, an easy but tedious computation shows that
$$
|E|=c|\Om|=R^2\arcsin\left(\frac{r_c}{\sqrt{R^2+r_c^2}}\right)+r_c^2\arcsin\left(\frac{R}{\sqrt{R^2+r_c^2}}\right)-r_cR.
$$
The mapping $r_c\mapsto R^2\arcsin\left(\frac{r_c}{\sqrt{R^2+r_c^2}}\right)+r_c^2\arcsin\left(\frac{R}{\sqrt{R^2+r_c^2}}\right)-r_cR$ is increasing on $\R_+$ and it follows that $r_c$ is determined in a unique way from $c$.

The numerical results presented on the table below suggest that $E_c$, the piece of disk of radius $r_{c}$, is not optimal for most of the possible values of $c$. This conjecture is tested for $R=1 / \sqrt{\pi}$, $\kappa=0.5$ and several values of the parameter $c$. However, for $c=0.15$, the algorithm we used did not manage to exhibit a set which is better than the piece of disk $E_c$. In all cases, it would be interesting to lead in a separate study a refined numerical investigation in order to validate or invalidate the conjecture that a piece of disk does not solve Problem \eqref{mini} when $\Omega$ stands for the unit disk and for $\beta\geq 0$.

\medskip

\begin{center}
\begin{tabular}{|c||c|c|c|c|c|c|c|c|}
\hline
 &  $c=0.1$ & $c=0.15$ & $c=0.2$ & $c=0.25$ & $c=0.3$ & $c=0.35$ & $c=0.4$\\
\hline
\hline
$r_c$  & 0.3408  &  0.4714   & 0.6234 &   0.8166  &  1.0869  &  1.5149  &  2.3408 \\
\hline
$\lambda(E_c)$  & 80.2483 & 49.5896 &34.6791 & 25.6912 & 19.7057 & 15.3542 & 12.0286 \\
\hline
$\lambda(E^*)$  & 80.2435  &  49.5912 &  34.6341  & 25.6727  & 19.6945  & 15.3520  & 12.0260\\
\hline
\end{tabular}
\end{center}

\medskip

\subsection{Some additional numerical investigations for \texorpdfstring{$\beta>0$}{Lg}}\label{sec:finnum}

In this section, we gather the optimal domains we have obtained for several values of the parameter $\beta$.  As previously, we consider the cases where $\Omega$ is the unit square or the disk with radius $1/\sqrt{\pi}$. As previously, the algorithm described in Section \ref{sec:numerics} is used to determine the optimal domain $E^*$. 

On Figure \ref{fig:squarebeta} (resp. on Figure \ref{fig:diskbeta}), the optimal domain $E^*$ are plotted for $c=0.2$, $\beta \in  \{1,5,50,1000\}$ and $\Omega=(0,1)^2$ (resp. $\Omega=B(0,1/\sqrt{\pi})$), as well as several convergence curves. 

\begin{figure}[h!]
\centering
\subfigure[$c=0.2$ - $\beta=1$]{\includegraphics[scale=0.35]{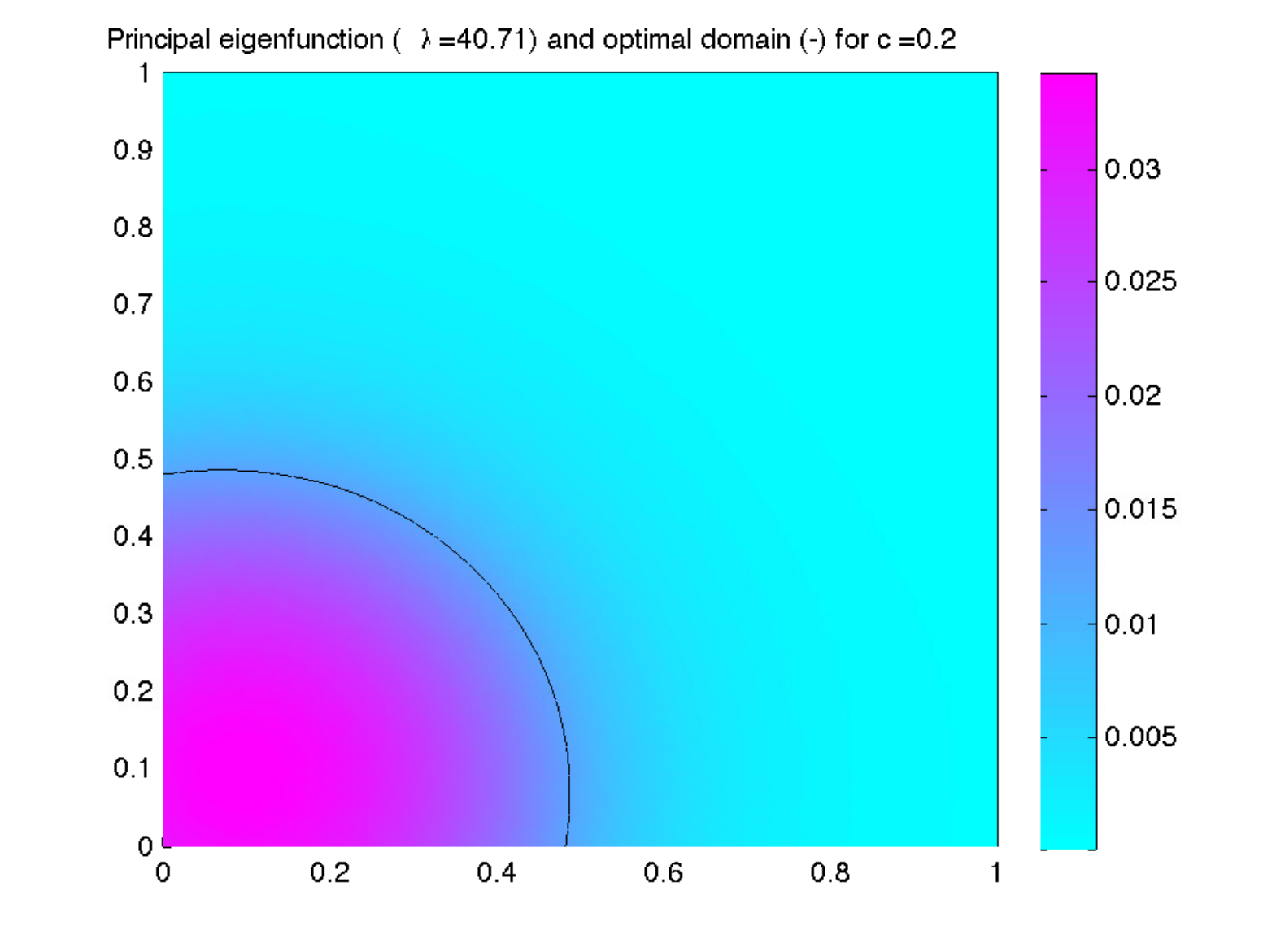}}
\subfigure[$c=0.2$ - $\beta=5$]{\includegraphics[scale=0.35]{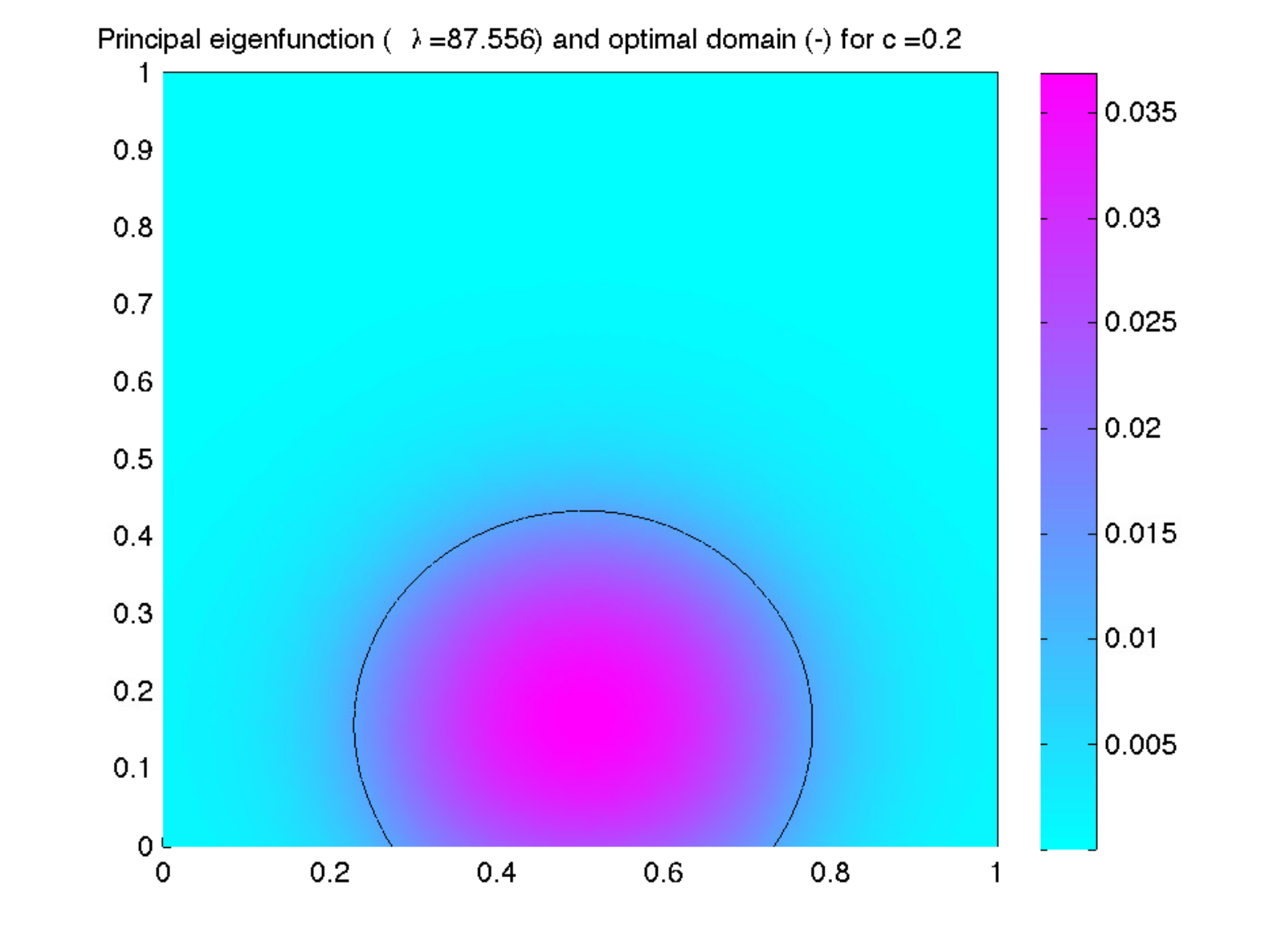}}
\subfigure[$c=0.2$ - $\beta=50$]{\includegraphics[scale=0.35]{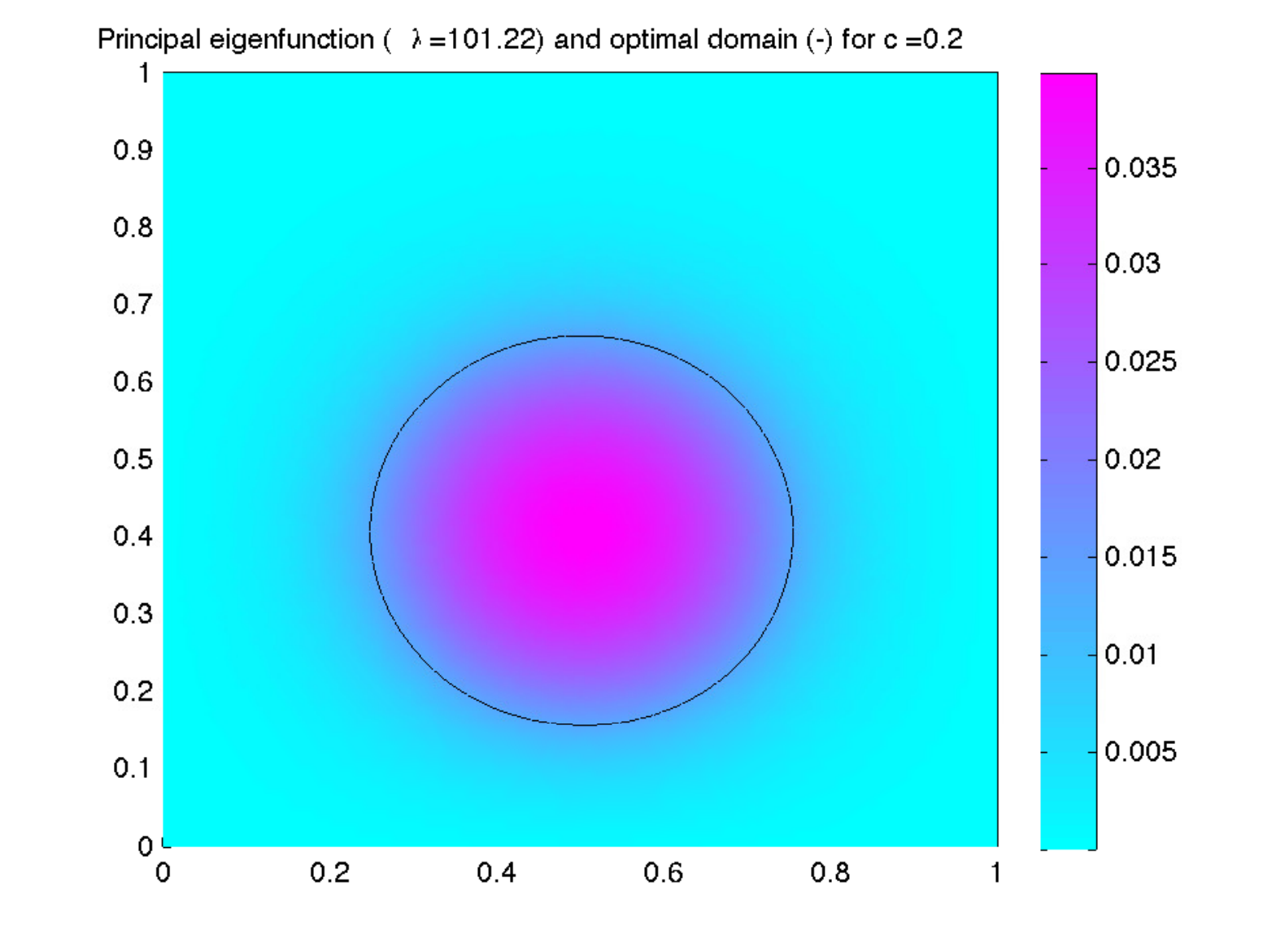}}
\subfigure[$c=0.2$ - $\beta=1000$]{\includegraphics[scale=0.35]{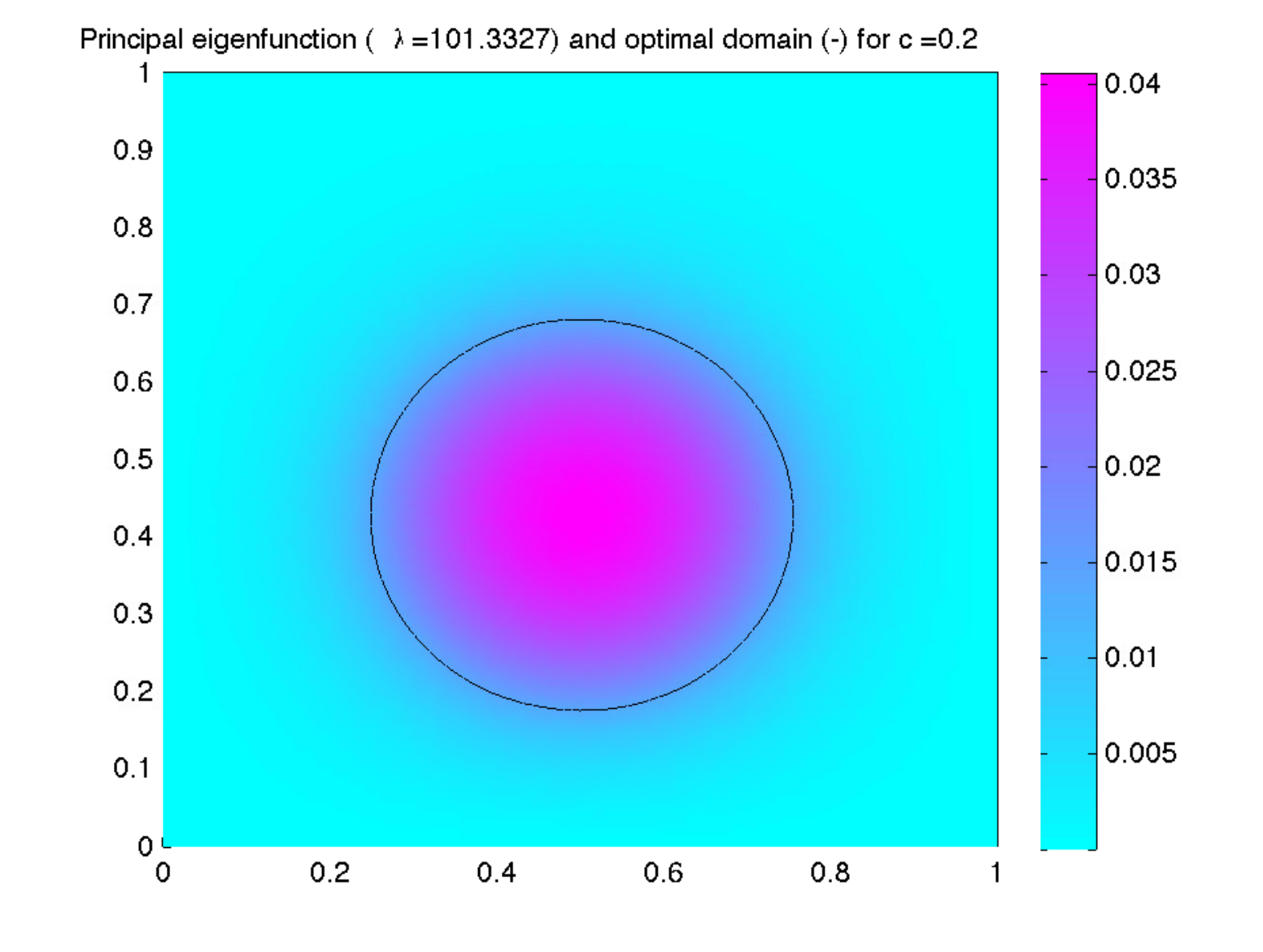}}
\subfigure[$c=0.2$ - $\beta=1$]{\includegraphics[scale=0.35]{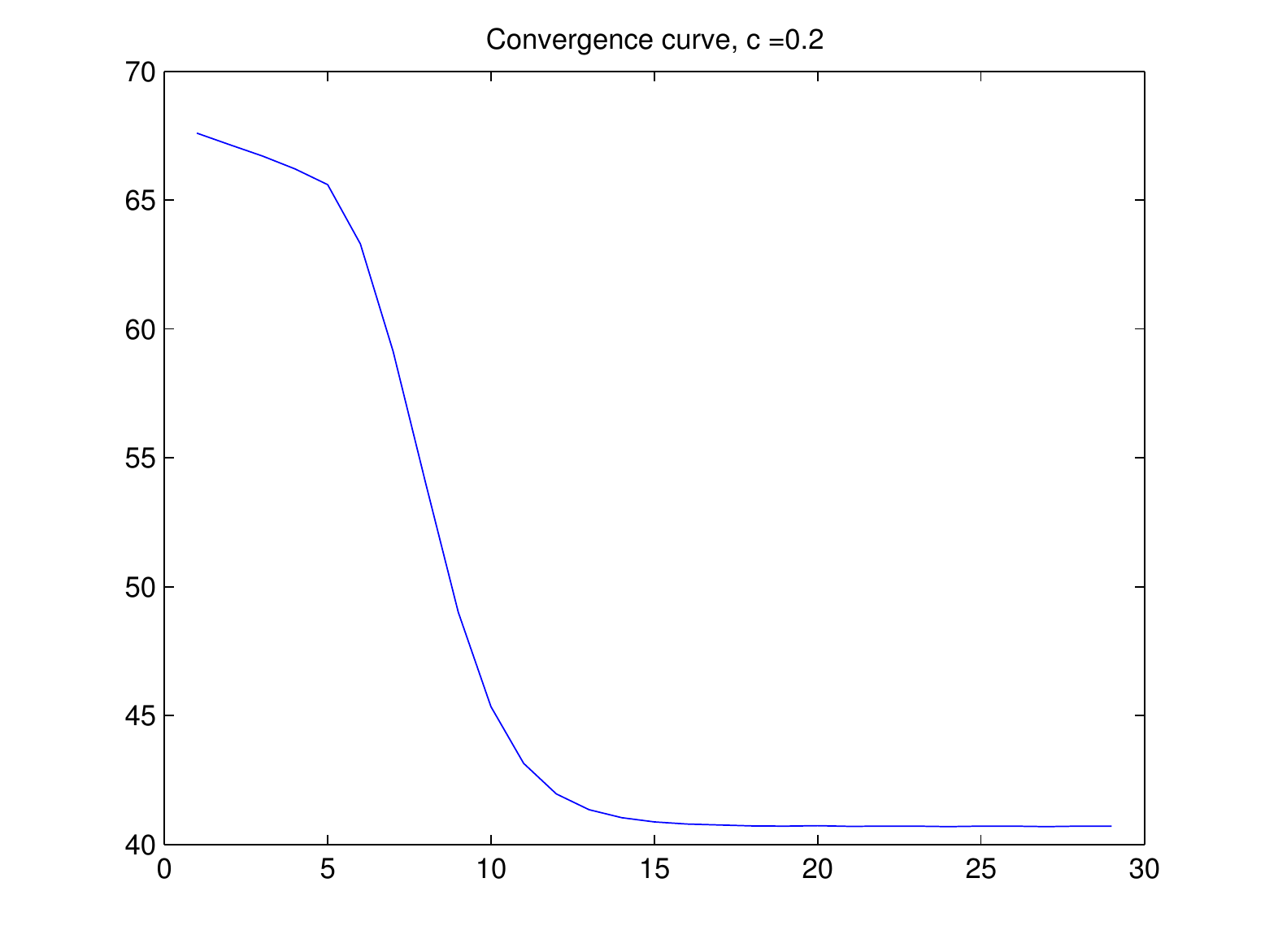}}
\subfigure[$c=0.2$ - $\beta=1000$]{\includegraphics[scale=0.35]{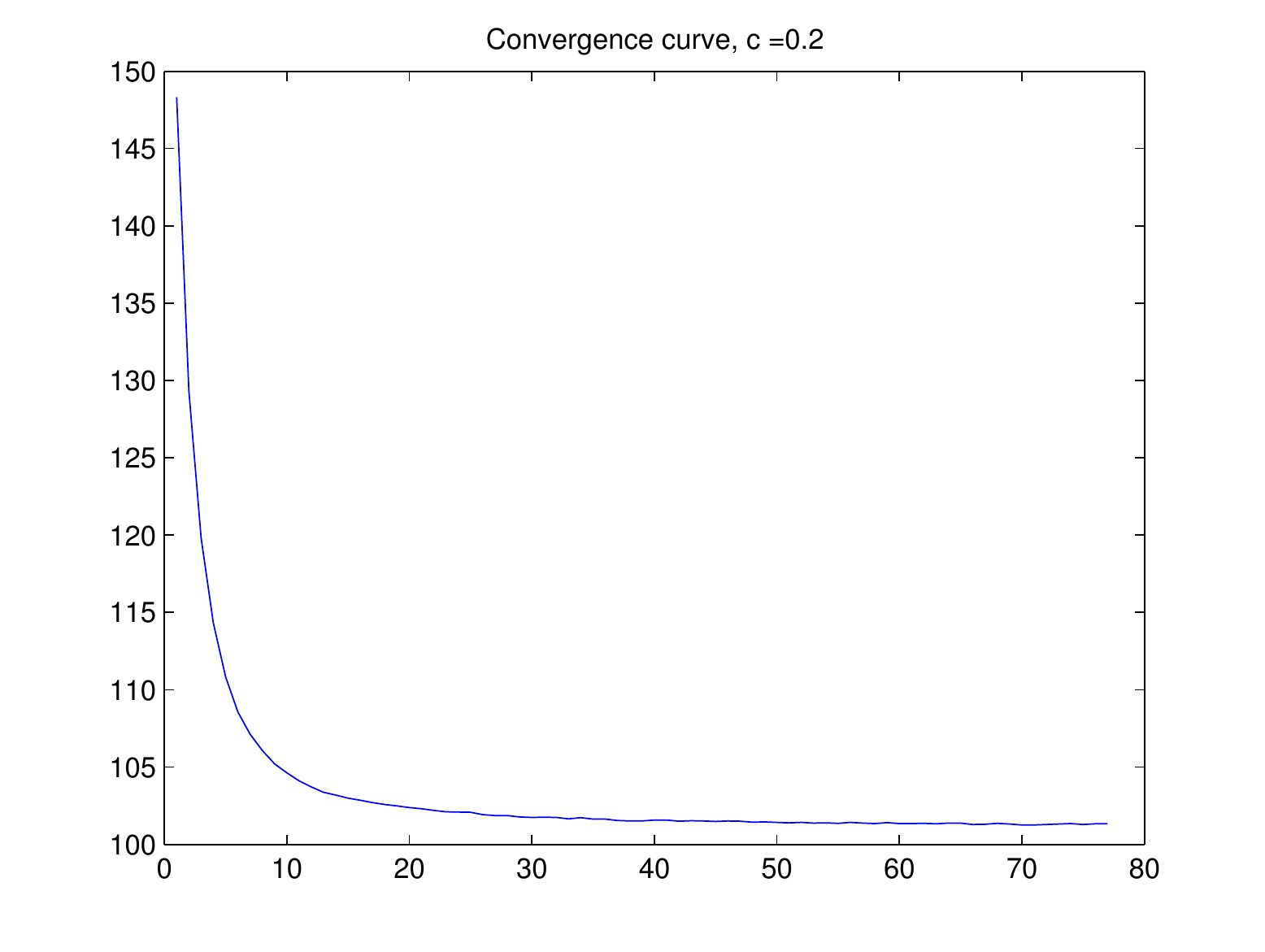}}
\caption{$\Omega=(0,1)^2$. Optimal domains for $\kappa=0.5$, $c=0.2$ and $\beta\in \{1,5,50,1000\}$ and two examples of convergence curves for $\kappa=0.5$, $c=0.2$ and $\beta\in \{1,1000\}$\label{fig:squarebeta}} 
\end{figure}


\begin{figure}[h!]
\centering
\subfigure[$c=0.2$ - $\beta=1$]{\includegraphics[scale=0.35]{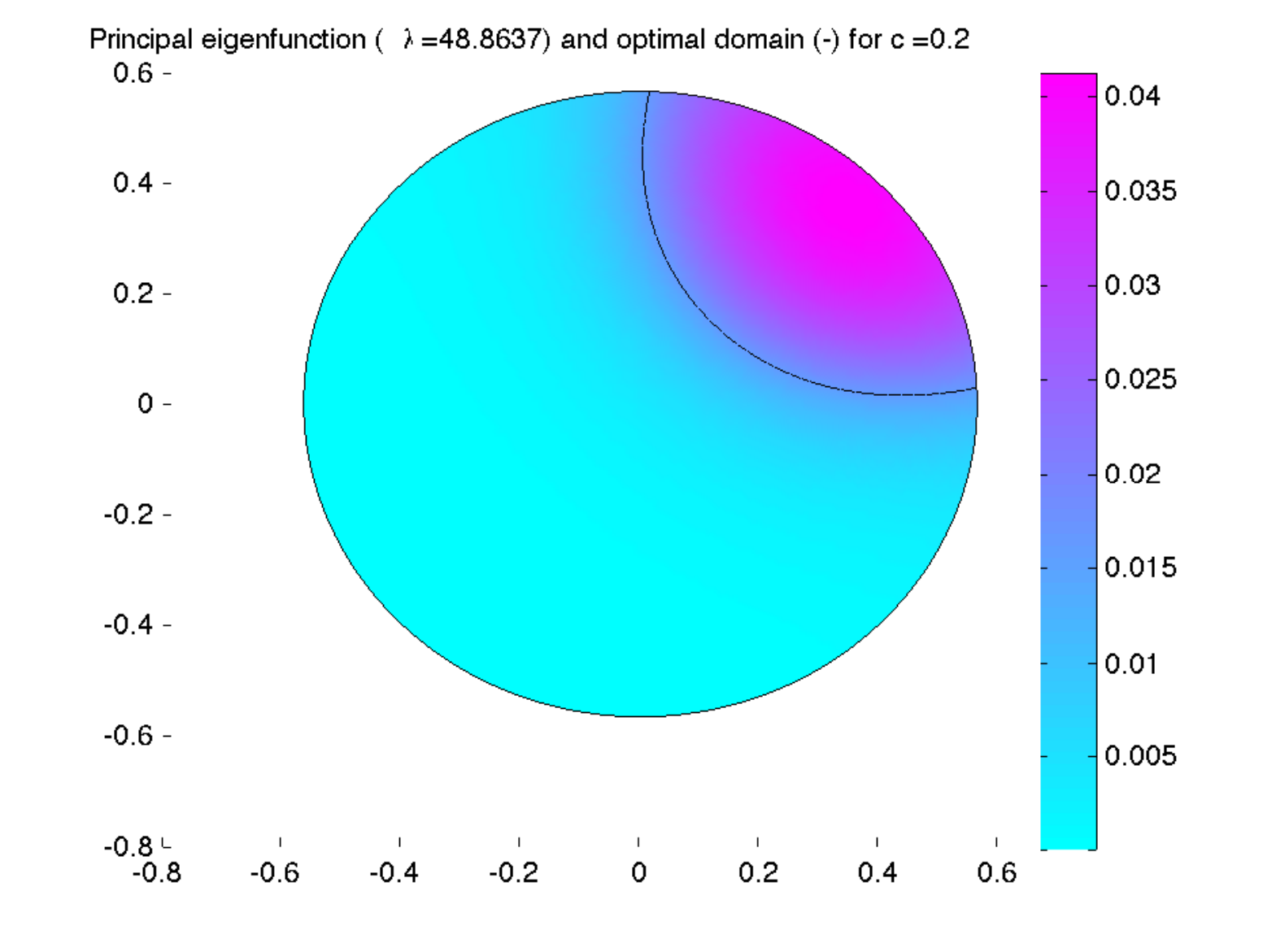}}
\subfigure[$c=0.2$ - $\beta=5$]{\includegraphics[scale=0.35]{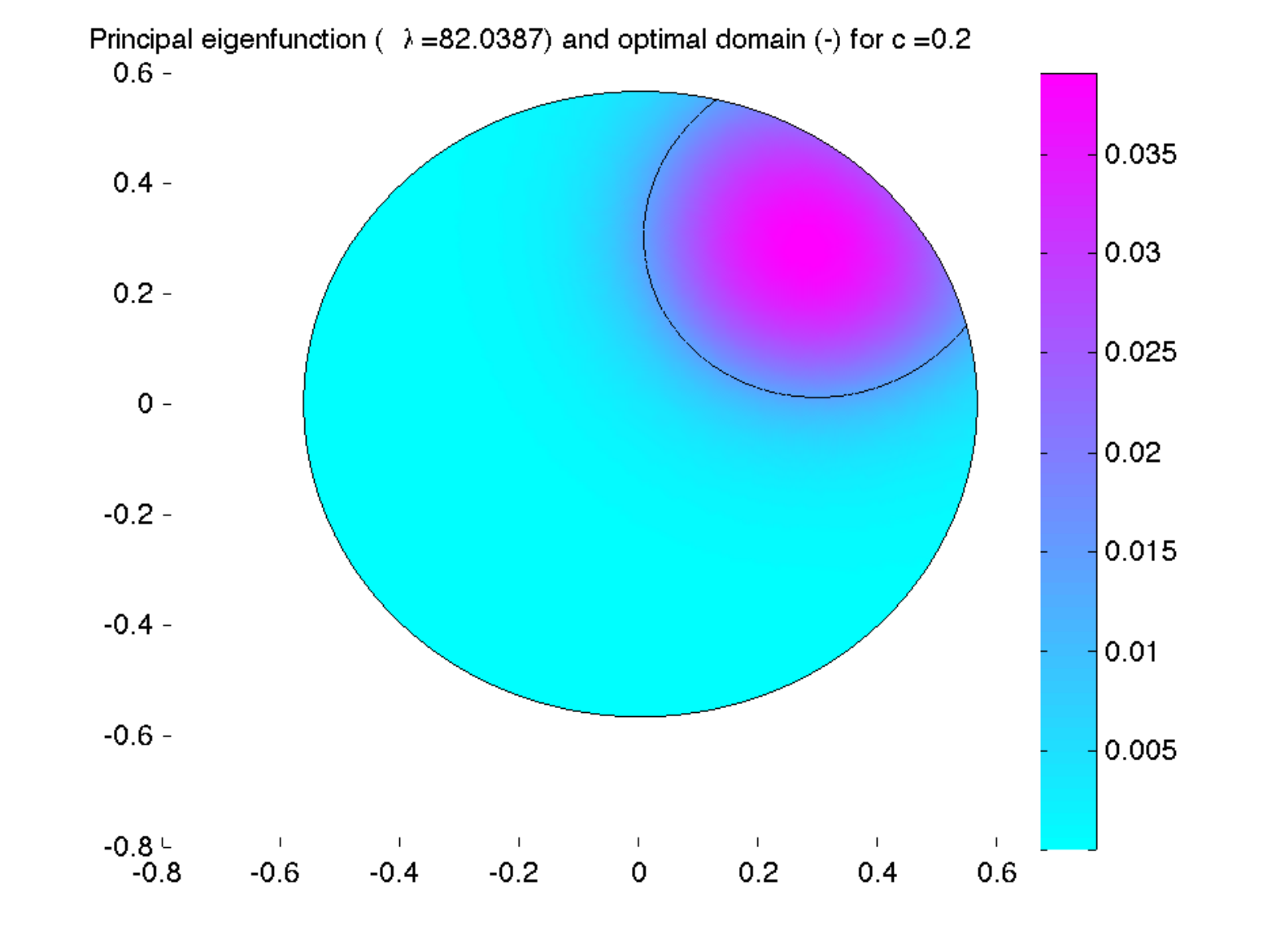}}
\subfigure[$c=0.2$ - $\beta=50$]{\includegraphics[scale=0.35]{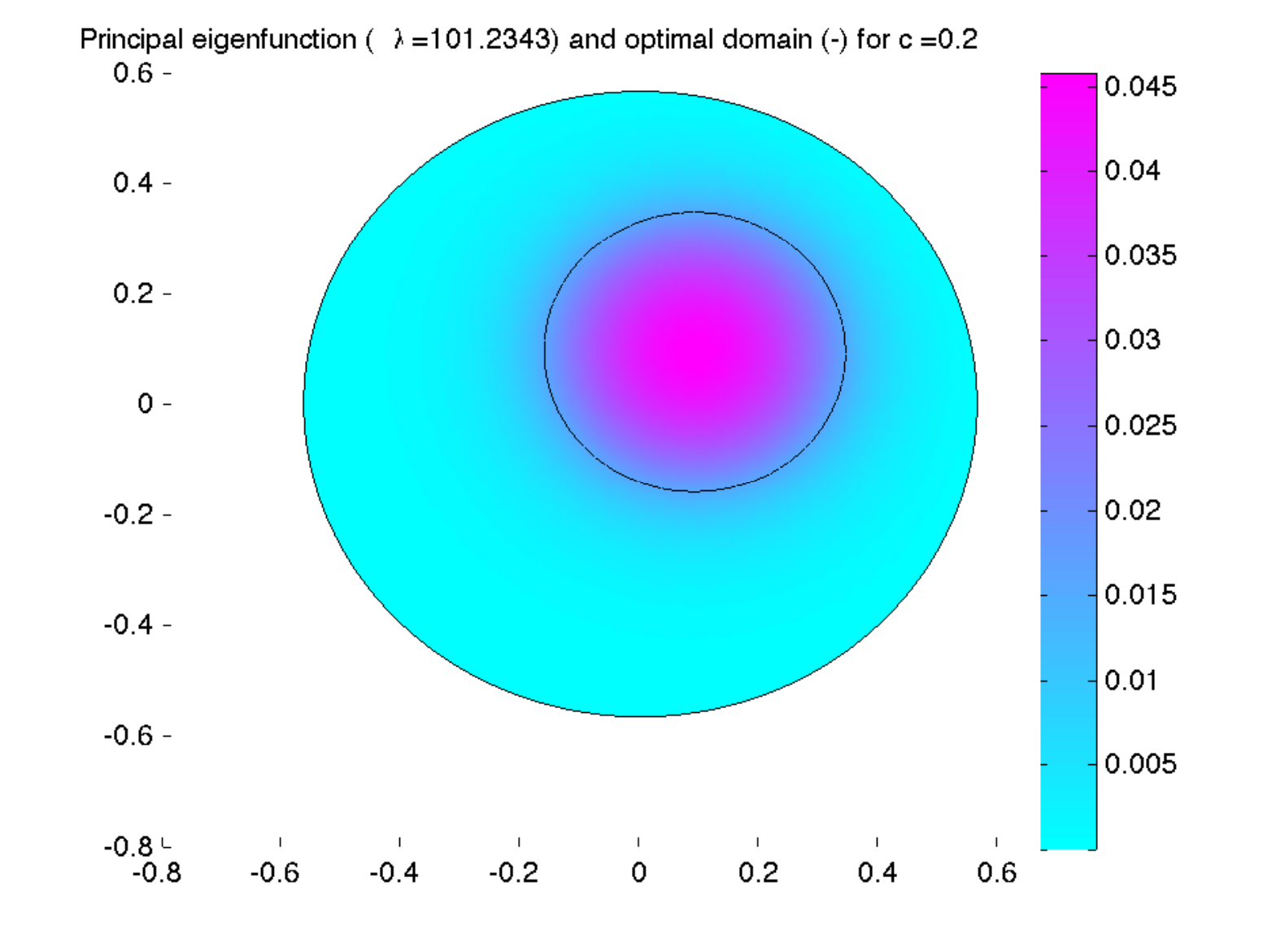}}
\subfigure[$c=0.2$ - $\beta=1000$]{\includegraphics[scale=0.35]{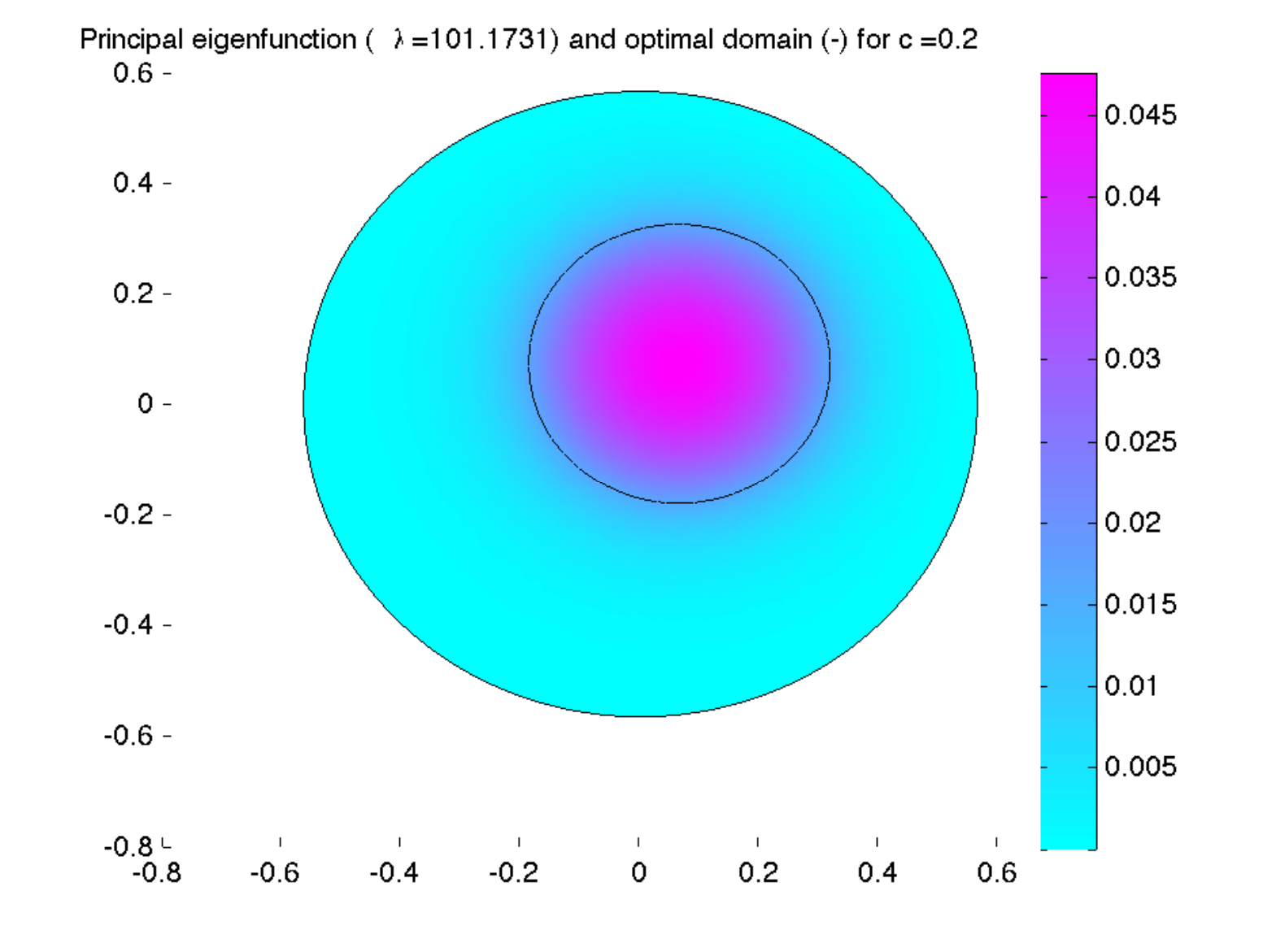}}
\subfigure[$c=0.2$ - $\beta=1$]{\includegraphics[scale=0.35]{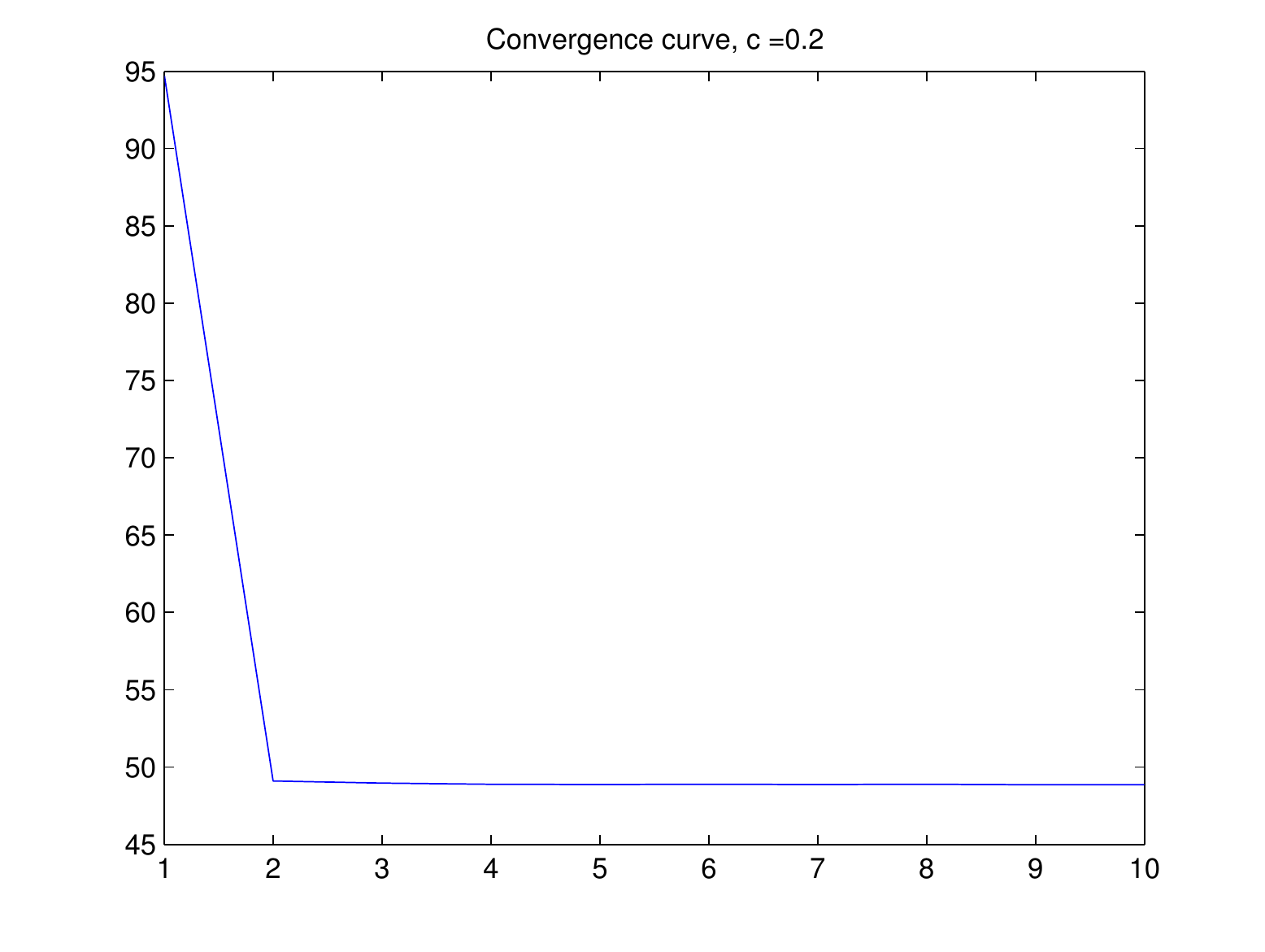}}
\subfigure[$c=0.2$ - $\beta=1000$]{\includegraphics[scale=0.35]{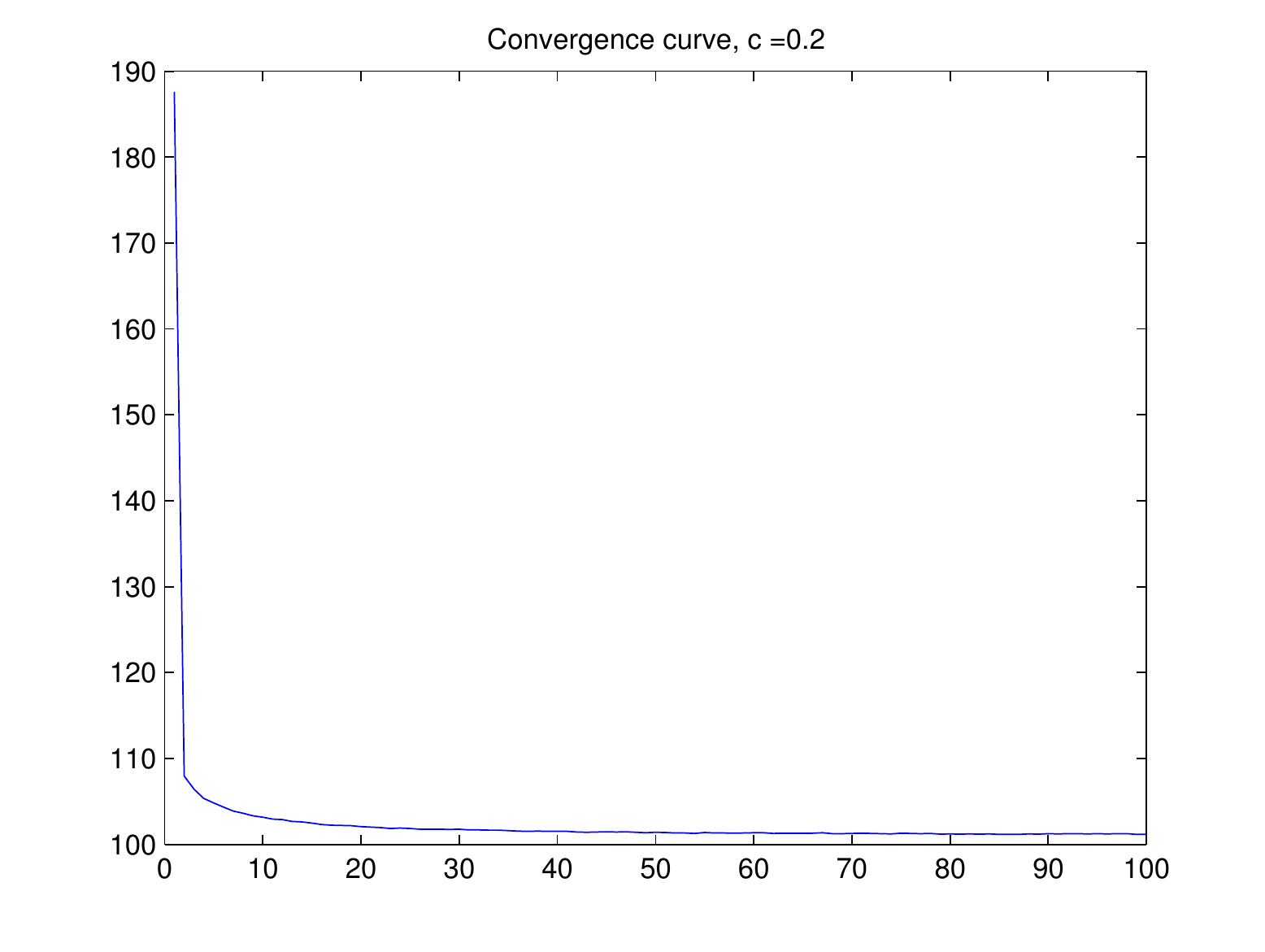}}
\caption{$\Omega=B(0,1)$. Optimal domains for $\kappa=0.5$, $c=0.2$ and $\beta\in \{1,5,50,1000\}$ and two examples of convergence curves for $\kappa=0.5$, $c=0.2$ and $\beta\in \{1,1000\}$\label{fig:diskbeta}} 
\end{figure}

\bibliographystyle{abbrv}
\bibliography{eig_indefinite.bib}
\bigskip

\end{document}